\newcommand{\be}{\begin{equation}}
\newcommand{\ee}{\end{equation}}
\newcommand{\bea}{\begin{eqnarray}}
\newcommand{\eea}{\end{eqnarray}}
\newtheorem{Theorem}{Theorem}[section]
\theoremstyle{definition}
\newtheorem{Definition}{Definition}[section]
\newtheorem{note}{Note}[section]
\theoremstyle{lemma}
\newtheorem{Lemma}{Lemma}[section]
\theoremstyle{example}
\newtheorem{example}{Example}[section]
\newtheorem{Remark}{Remark}[section]
\theoremstyle{illustration}
\theoremstyle{proposition}
\newtheorem{pro}{Proposition}[section]
\theoremstyle{corollary}
\newtheorem{Corollary}{Corollary}[section]
\numberwithin{equation}{section}
\begin{document}
\date{}
\title{\textbf{On some geometric properties of generalized Musielak-Orlicz sequence space and corresponding operator ideals}}
\author{Amit Maji \footnote{Corresponding author's e-mail: amit.iitm07@gmail.com }~ and
        P. D. Srivastava \footnote{ Author's e-mail: pds@maths.iitkgp.ernet.in}
\\
\textit{\small{Department of Mathematics, Indian Institute of Technology Kharagpur}}
\\
\textit{\small{Kharagpur 721 302, West Bengal, India}}}
\maketitle
\begin{center}\textbf{Abstract}\end{center}
Let $\bold{\Phi} =(\phi_n)$ be a Musielak-Orlicz function, $X$ be a real Banach space and $A$ be any infinite matrix.
In this paper, a generalized vector-valued Musielak-Orlicz sequence space $l_{\bold{\Phi}}^{A}(X)$ is introduced.
%In this paper, a generalized vector-valued Musielak-Orlicz sequence space $l_{\bold{\Phi}}^{A}(X)$, where $\bold{\Phi} =(\phi_n)$ is a
%Musielak-Orlicz function, $X$ is a real Banach space and $A$ is any infinite matrix is introduced.
It is shown that the space is complete normed linear space under certain conditions on the matrix $A$. It is also shown that $l_{\bold{\Phi}}^{A}(X)$ is a $\sigma$- Dedikind complete whenever $X$ is so. We have discussed some geometric properties, namely, uniformly monotone, uniform Opial property for this space. Using the sequence of $s$-number (in the sense of Pietsch), the operators of $s$-type $l_{\bold{\Phi}}^{A}$ and operator ideals under certain conditions on the matrix $A$ are discussed.\\ \\
\textit{2010 Mathematics Subject Classification}: 46A45; 47B06; 47L20.\\
\textit{Keywords:}  Musielak-Orlicz function, Banach lattice, $s$-numbers, Operator ideals.
\section{Introduction}

\qquad The theory of sequence spaces has several important applications in many branches of mathematical analysis. The classical sequence space $l_2$
is extended to $l_p, 1< p< \infty$ by Reisz \cite{RIE}, and further its generalizations to Lorentz sequence space $l_{p, q}$, for $0< p, q< \infty$ due to Hardy and Littlewood \cite{HARD}. W. Orlicz \cite{ORL} has generalized the sequence space $l_p$ to Orlicz sequence space $l_{\phi}$ with the help of Orlicz function $\phi$ while Woo \cite{WOO} has generalized the Orlicz sequence space to modular sequence space. In recent years, many mathematicians are interested to study the theory of sequence spaces generated by Ces\`{a}ro mean, Orlicz function, Musielak-Orlicz function or using the combination of these. The Ces\`{a}ro-Orlicz sequence space is introduced by Lim and Lee \cite{LIM}. Later on, Cui et al. \cite{CUI} have studied and discussed its basic topological properties as well as geometric properties. In 1990, Kaminska introduced Orlicz-Lorentz space \cite{KAM1} and after that Foralewski et al. (2008, \cite{FOR4}) have introduced generalized  Orlicz-Lorentz sequence space. Srivastava and Ghosh \cite{SRI} also studied vector valued sequence spaces using Orlicz function.

With the help of $s$-numbers, many authors such as Pietsch \cite{PIE6}, Carl \cite{CAR} etc. have introduced and studied operator ideal over the sequence spaces.
The theory of operator ideal is important in spectral theory, the geometry of Banach spaces, theory of eigenvalue distributions, etc. The
$s$-numbers, in particular, approximation numbers of a bounded linear operator plays an important role in the study of compactness, eigenvalue problem etc. Pietsch \cite{PIE1} studied the class of $l_p$, $0<p< \infty$ type operators. In 2012, Gupta and Acharya \cite{GUP2} have introduced the class of $l_{\phi}$ type. Recently, Gupta and Bhar \cite{GUP1} studied generalized Orlicz-Lorentz sequence space and developed operator ideals with the help of $s$-number. Maji and Srivastava \cite{AMIT} also introduced and studied the class of $s$-type $|A, p|$ operators using $s$-number and $|A, p|$ space.

The natural question arises that can we unify the results on various type of operators. The present work is an attempt in this direction. We have introduced a vector-valued new sequence space with the help of sequence of Orlicz functions and an infinite matrix. It is shown that sequence spaces such as Musielak-Orlicz, Ces\`{a}ro-Orlicz, Orlicz-Lorentz etc. are the particular cases with the suitable choice of an infinite matrix. We have proved that the space is a complete normed linear space under certain conditions on the matrix. We have also discussed some geometric properties, namely, uniformly monotone, uniform Opial property for this space. With the help of $s$-number, we have also developed operator ideals under certain conditions on the matrix.

\section{Preliminaries}
Throughout the paper, we consider $(X, \|. \|)$ as a real Banach space and $S(X)$ be the unit sphere in $X$. Let $l_{\infty}(X) = \displaystyle \{ \bar{x}=(x_n): x_n \in X,  ~\sup_{n \geq 1}\|x_n \| < \infty  \}$. Let $w$, $\mathbb{R}$, $\mathbb{R}_{+}$ and $\mathbb{N}$ stand for the set of all real sequences, the set of real numbers, the set of all non negative real numbers and the set of all natural numbers respectively.
A sequence space $(\lambda, \|. \|_{\lambda})$ is called a $BK$ space if it is a Banach space with continuous coordinates $p_n: \lambda \rightarrow \mathbb{R}$, i.e., $p_n({\alpha})= \alpha_n$ for all ${\alpha} \in \lambda$ and every $n\in \mathbb{N}$, where ${\alpha}= (\alpha_1, \alpha_2, \ldots  )$.
A BK-space $(\lambda, \|. \|_{\lambda})$ is called an AK-space if ${\alpha}^{[n]} \rightarrow {\alpha}$, where ${\alpha}^{[n]}= (\alpha_1, \alpha_2, \ldots, \alpha_n, 0, 0, \ldots )$, the nth section of ${\alpha}$. An infinite matrix $A=(a_{nk})$ is called a triangle if $a_{nn}\neq 0$ and $a_{nk}=0$ for all $k>n$.
The following notations are used:\\
$x|_i$ stand for $(x_1, x_2, \ldots, x_i, 0, 0, \ldots)$ and $x|_{\mathbb{N}-i}$ stand for
$(0, 0,  \ldots, 0, x_{(i+1)}, x_{(i+2)}, \ldots)$, where $x \in w$ and $i \in \mathbb{N}$.

%A sequence space $(\lambda, \|. \|_{\lambda})$ is called a $BK$ space if it is a Banach space with continuous coordinates $p_n: \lambda \rightarrow \mathbb{R} (n\in \mathbb{N})$ i.e. $p_n(\bar{\alpha})= \alpha_n$ for all $\bar{\alpha} \in \lambda$ and every $n\in \mathbb{N}$, where $\bar{\alpha}= (\alpha_1, \alpha_2, \ldots  )$.
%A BK-space $(\lambda, \|. \|_{\lambda})$ is called an AK-space if $\bar{\alpha}^{[n]} \rightarrow \bar{\alpha}$, where $\bar{\alpha}^{[n]}= (\alpha_1, \alpha_2, \ldots, \alpha_n, 0, 0, \ldots )$, the nth section of $\bar{\alpha}$. An infinite matrix $A=(a_{nk})$ is called a triangle if $a_{nn}\neq 0$ and $a_{nk}=0$ for all $k>n$.
%\\

%$x|_I=\{x=(x(i))\in l^0: x(i)\neq 0 \mbox{~for all~} i\in I\subseteq \mathbb{N} \mbox{~and~} x(i)=0 \mbox{~for all~} i\in \mathbb{N}\setminus I\}$, \\
\begin{Definition}{\rm \cite{LIN}}
    An Orlicz function $\phi: [0, \infty)\rightarrow [0, \infty)$ is a convex, nondecreasing, continuous function on $[0, \infty)$ such that $\phi(0) = 0$, $\phi(t) > 0$ for $t > 0$ and $\phi(t) \rightarrow \infty$ as $t \rightarrow \infty$.
\end{Definition}
   %  For each Orlicz function $\phi$, there always exists an integral representation $ \phi(x) = \int_{0}^{x} p(t) dt$, where $p$ is called the kernel of $\phi$. The kernel $p(t)$ is nondecreasing and right continuous for $t \geq 0$, and $p(0)=0,$ $ p(t) > 0$ for $t > 0$, $p(t) \rightarrow \infty$ as
%     $t \rightarrow \infty$.
\begin{Definition}{\rm \cite{LIN}}
        An Orlicz function $\phi$ is said to satisfy $\Delta_{2}$-condition at zero ($\phi \in \Delta_2(0)$ for short) if there exist $K > 0$ and $t_{0}> 0$ such that $\phi(2t) \leq K \phi(t)$ for all $t \in [0, t_{0}]$.
\end{Definition}

\begin{Definition}{\rm \cite{LIN}}
A sequence $\bold{\Phi} = (\phi_n)$, where each $\phi_n$ is an Orlicz function, is said to be a Musielak-Orlicz function.
\end{Definition}
The Musielak-Orlicz sequence space $l_{\bold{\Phi}}$ ( see \cite{KAT}, \cite{LIN}) generated
by $\bold{\Phi}= (\phi_n)$ is defined as $$ l_{\bold{\Phi}} = \Big \{ x=(x_{n}) \in w :  \sum_{n=1}^{\infty}\phi_n \Big(\frac{|x_{n}|}{ \sigma}\Big) < \infty \quad {\rm{ for ~ some}}~~ \sigma > 0 \Big \}$$ and convex modular $\varrho_{\bold{\Phi}}$ is defined as $\varrho_{\bold{\Phi}}(x) = \displaystyle\sum_{n=1}^{\infty}\phi_n (|x_{n}|)$.

A Musielak-Orlicz function $\bold{\Phi} = (\phi_n)$ is said to satisfy the condition $\delta_2$ ($\bold{\Phi} \in \delta_2 $ for short) if there exists $K > 0$, $\delta> 0$ and a nonnegative scalar sequence $(c_n) \in  l_1$ such that for each $n \in \mathbb{N}$ and all $x \geq 0$,
$$ \phi_n(2x) \leq K \phi_{n}(x) + c_n $$ whenever $\phi_{n}(x) \leq \delta $.

It is easy to check that $\bold{\Phi} = (\phi_n) \in \delta_2$ if and only if for all $\beta > 1$ there exists $K >0$, $ \delta > 0$
and a nonnegative scalar sequence $(c_n) \in l_1$ such that for all $n \in \mathbb{N}$ and $x \geq 0$
$$ \phi_n(\beta x) \leq K \phi_{n}(x) + c_n $$ holds if $\phi_n(x) \leq \delta$. For details on Musielak-Orlicz function, one can see \cite{KAT}, \cite{LIN}.

A Musielak-Orlicz function $\bold{\Phi} = (\phi_n)$ is said to satisfy the condition $(*)$ (see \cite{KAM}) if for any $\epsilon\in (0, 1)$ there exists $\delta>0$ such that
\begin{align}{\label{2.1}}
\phi_n(u) < 1 - \epsilon \mbox{~implies~} \phi_n((1 + \delta)u) \leq 1\mbox{~for all~} n \in \mathbb{N} \mbox{~and~} u\geq 0.
\end{align}
%\noindent It is also true for $\epsilon > 0,$ we can find a sequence $(c_n)$ such that $\sum\limits_{n=1}^{\infty}c_n < \epsilon$.

\begin{Definition}{{\rm\cite{KAN}}}
A real Banach space $X$, endowed with partial order $\leq$ satisfying the properties $x \leq y$ implies $x+ z \leq y + z$ for all $x, y, z \in X$ and
$0 \leq tx$ for $0 \leq x$ and $t \in \mathbb{R}_{+}$, is said to be a Banach lattice if $\|x \| \leq \| y\|$ whenever $|x| < |y|$ for $x, y \in X$, where $|x| = \displaystyle \sup \{x, -x\}$, the absolute value of $x$.
\end{Definition}

Let $X=(X, \leq , \|. \|)$ be a Banach lattice with a lattice norm $\|.\|$.
The positive cone $X_{+}$ of $X$ is defined as $ X_{+} = \{ x \in X : 0 \leq x \}$.
A Banach lattice $X$ is said to be $\sigma$-Dedekind complete ($\sigma$-DC for short) if any nonnegative order bounded
sequence $(x_n)$ in $X$ has supremum in $X$.

The norm $\|. \|$ in a Banach lattice $X$ is said to be strictly monotone if $x, y \in X_{+}$ with $y \leq x$ and $y \neq x$, there holds
$\|y \| < \|x\|$ and the space $X$ has strictly monotone property. The norm $\|. \|$ is said to be uniformly monotone (UM for short)
if for each $\epsilon> 0$ there exists $\delta(\epsilon)>0$ such that $$\|x + y \| \geq 1 + \delta(\epsilon),$$
for each $x, y \in X_{+}$ with $\|x \| =1$ and $\|y \| \geq \epsilon$. Then the space $X$ has UM property. A Banach lattice $X$ is said to be an
AL-space if $\|x+y \| = \|x \| + \| y \|$ for all $x, y \in X_{+}$. An element $ x\in X$ is said to be order continuous if for any sequence $(x_n)$
in $X$ such that $0 \leq x_n \leq |x|$ for all $n \in \mathbb{N}$, $x_1 \geq x_2 \geq \ldots$ and $\inf x_n =0$, there holds $\|x_n \| \rightarrow 0$.
The space $X$ is said to be order continuous if every element of $X$ is order continuous. For details on Banach lattice, refer to \cite{HUD}, \cite{KAN}.\\
%For details on Banach lattice and its properties one can see \cite{KAN}.

%There are several geometric properties of a Banach space. Some of them are Opial property and uniform Opial property (UOP for short) (see \cite{CUI2}).
A Banach space $X$ is said to have Opial property if for every weakly null sequence $(x_n)$ and for every $x \neq 0$ in $X$, we have
$$\displaystyle\liminf_{n \rightarrow \infty} \|x_n \| < \liminf_{n \rightarrow \infty} \|x_n + x \|.$$
A Banach space $X$ is said to have the uniform Opial property if for every $\epsilon > 0$ there exists $\mu > 0$ such that
$$ 1 + \mu \leq \liminf_{n \rightarrow \infty}\|x_n + x\|$$ for any weakly null sequence $(x_n) \in S(X)$ and $x \in X$ with $\|x \| \geq \epsilon$.\\

%\begin{Definition}
%A sequence space $\lambda$ is called solid if $x \in \lambda$ and $|y_k| \leq |x_k|$ for all $k$ implies $y \in \lambda$.
%\end{Definition}

Let $ \mathcal{L} $ be the class of all bounded linear operators between two arbitrary Banach spaces and $\mathcal{L}(E, F)$ be the space of
all bounded linear operators from a Banach space $E$ to a Banach space $F$. We denote $E^{'}$ as the dual of $E$ and $x^{'}$ as the continuous
linear functional on $E$. Define $x'\otimes y: E \rightarrow F$ by $x'\otimes y(x) = x'(x)y$ for all $x \in E$ and $y \in F$.
\begin{Definition}\textbf{($s$-numbers of a bounded linear operator)}{\rm{(\cite{CAR}, \cite{PIE6})}}\label{def1}
        A map $ s=(s_{n}): \mathcal{L} \rightarrow \mathbb{R^{\mathbb{N}}}$ assigning to every operator
        $T \in \mathcal{L} $ a non-negative scalar sequence $ (s_{n}(T))$ is called an $s$-number sequence if the following conditions are satisfied:
\begin{description}
        \item [($S1$)] monotonicity:  $ \| T \| =  s_{1}(T) \geq s_{2}(T)\geq \cdots \geq 0$, \quad for $T \in \mathcal{L}(E, F) $
        \item [($S2$)] additivity: $s_{m+n-1}(S+T) \leq s_{m}(S) + s_{n}(T),$ \quad for
                        $ S, T \in \mathcal{L}(E, F)$, $m, n \in \mathbb{N}$
        \item [($S3$)] ideal property: $s_{n}(RST) \leq \| R \|s_{n}(S)\| T \|,$ \quad for some
                        $R \in \mathcal{L}(F, F_{0}),~ S \in \mathcal{L}(E, F)$ and $ T \in \mathcal{L}(E_{0}, E)$, where $ E_{0}, F_{0} $ are arbitrary Banach spaces
        \item [($S4$)] rank property: if $\mbox{rank}(T) \leq n ~{\rm then}~ s_{n}(T) = 0$
        \item [($S5$)] norming property: $ s_{n}(I : l_{2}^n \rightarrow l_{2}^n) = 1$, where $I$
                       denotes the identity operator on the $n$-dimensional Hilbert space $l_{2}^{n}$.
\end{description}
\end{Definition}
        We call $s_{n}(T)$ the $n$-th $s$-number of the operator $T$. In particular the approximation number is an example of an $s$-number and
        the $n$-th approximation number, denoted by $a_{n}(T)$, is defined as
\begin{center}
        $ a_{n}(T)  = \inf \Big\{  \| T - L \| : \quad L \in \mathcal{L}(E, F), ~ \rm{rank}(L) < n  \Big\}$.
\end{center}
        %For results on
%        $ s$-number sequence, refer (\cite{PIE4}, \cite{PIE6}).
\begin{Definition}{\rm(\cite{PIE6}, p.90)}
        An $s$-number sequence $s=(s_{n})$ is called injective if, given any metric injection $ J \in \mathcal{L}(F, F_{0})$,
         $ s_{n}(T) = s_{n}(JT)$ for all $T \in \mathcal{L}(E, F)$.
\end{Definition}
\begin{Definition}{\rm(\cite{PIE6}, p.95)}
        An $s$-number sequence $s=(s_{n})$ is called surjective if, given any metric surjection
        $ Q \in \mathcal{L}(E_{0}, E)$, $ s_{n}(T) = s_{n}(TQ)$ for all $T \in \mathcal{L}(E, F)$.
\end{Definition}
\begin{Definition}\textbf{(Operator ideals) } (\cite{PIE4},~ \cite{RET})\label{def2}
		A sub collection $\mathcal{M}$ of $\mathcal{L}$ is said to be an operator ideal if each component
        $\mathcal{M}(E,F) = \mathcal{M} \bigcap \mathcal{L}(E,F)$ satisfies the following conditions:
\begin{itemize}
\item [($OI1$)]if $x' \in E'$, $y \in F$ then $x'\otimes y \in \mathcal{M}(E, F)$
\item [($OI2$)]if $S, T \in \mathcal{M}(E, F)$ then $S+T \in \mathcal{M}(E, F)$
\item [($OI3$)]if $S \in \mathcal{M}(E, F)$, $ T \in \mathcal{L}(E_{0}, E)$ and
        $R \in \mathcal{L}(F, F_{0})$ then $RST \in \mathcal{M}(E_{0}, F_{0})$.
\end{itemize}
%\end{description}
\end{Definition}
		
\begin{Definition}(\cite{PIE4},~ \cite{RET})
        A function $ \alpha : \mathcal{M} \rightarrow \mathbb{R}^{+}  $ is said to be a quasi-norm
        on the ideal $ \mathcal{M} $ if the following conditions hold:
%\begin{description}
\begin{itemize}
\item[($QN1$)] if $x' \in E' $, $y \in F$ then $ \alpha(x'\otimes y)= \| x^{'} \| \| y \| $
\item[($QN2$)] if $S, T \in \mathcal{M}(E, F)$ then there exists a constant $ C \geq 1$
                such that $ \alpha(S+T)\leq C [\alpha(S) + \alpha(T)]$
\item[($QN3$)] if $S \in \mathcal{M}(E, F)$, $ T \in \mathcal{L}(E_{0}, E)$ and
                $R \in \mathcal{L}(F, F_{0})$, then $ \alpha(RST) \leq \|R \|\alpha(S ) \| T \|$.
\end{itemize}
%\end{description}
        In particular if $ C =1 $ then $\alpha$ becomes a norm on the operator ideal
        $ \mathcal{M} $.
\end{Definition}

        An ideal $ \mathcal{M} $ with a quasi-norm $ \alpha $, denoted by $ [ \mathcal{M}, \alpha] $
        is said to be a quasi-Banach operator ideal if each component $ \mathcal{M}(E, F)$ is
        complete under the quasi-norm $ \alpha $.
%\begin{Definition}(\cite{PIE3},~ \cite{RET})
%        For every operator ideal $ \mathscr{M}$, the dual operator ideal denoted by $\mathscr{M^{'}}$
%        is defined as
%        $$ \mathscr{M^{'}}(E, F) = \Big\{ T \in \mathscr{L}(E, F): \quad T^{'} \in \mathscr{M}(F^{'}, E^{'}) \Big \},$$
%        where $T^{'}$ is the dual of $T$ and $E^{'}$ and $F^{'}$ are the duals of $E$ and $F$
%        respectively.
%\end{Definition}

\section{Main results}
        Let $A= (a_{nk})$ be an infinite real matrix, $\bold{\Phi} =(\phi_n)$ be a Musielak-Orlicz function and $X$ be a real Banach space.
        Now we introduce generalized Musielak-Orlicz sequence space using an infinite matrix $A =(a_{nk})$ as $$ \displaystyle l^{A}_{\bold{\Phi}}(X) = \Big \{ \bar{x}=(x_{k}) \in l_{\infty}(X) :  \sum_{n=1}^{\infty} \phi_n \bigg(\frac{\sum\limits_{k=1}^{\infty}\|a_{nk}x_{k} \|}{ \sigma}\bigg) < \infty \quad {\rm{ for ~ some}}~~ \sigma > 0 \Big \}$$
        and
        $$ \displaystyle h^{A}_{\bold{\Phi}}(X) = \Big \{ \bar{x}=(x_{k}) \in l_{\infty}(X) :  \sum_{n=1}^{\infty} \phi_n \bigg(\frac{\sum\limits_{k=1}^{\infty}\|a_{nk}x_{k} \|}{ \sigma}\bigg) < \infty \quad {\mbox{ for all}}~~ \sigma > 0 \Big \}.$$

        It can be proved that the space $ \displaystyle l^{A}_{\bold{\Phi}}(X)$ is a linear space and the space $ \displaystyle h^{A}_{\bold{\Phi}}(X)$
        is a linear subspace of $ \displaystyle l^{A}_{\bold{\Phi}}(X)$. If $X = \mathbb{R}$ then we write $ \displaystyle l^{A}_{\bold{\Phi}}$ and $ \displaystyle h^{A}_{\bold{\Phi}}$ instead of $ \displaystyle l^{A}_{\bold{\Phi}}(\mathbb{R})$ and $ \displaystyle h^{A}_{\bold{\Phi}}(\mathbb{R})$ respectively. Let us define a function $\varrho_{\bold{\Phi}}^{A}$ on $\displaystyle l^{A}_{\bold{\Phi}}(X)$ by
        $$ \varrho_{\bold{\Phi}}^{A}(\bar x) =  \sum_{n=1}^{\infty} \phi_n \bigg(\sum\limits_{k=1}^{\infty}\|a_{nk}x_{k} \| \bigg).$$
        Let $\mathcal{A}$ be a class of infinite matrices $A= (a_{nk})$ such that each column of a matrix is non zero, i.e., for each $k$ there exists at least one $n_0$ such that $a_{n_{0}k} \neq 0$. If $A=(a_{nk}) \in \mathcal{A}$ then it can be proved that the function
        $\varrho_{\bold{\Phi}}^{A}$ is a convex modular on $\displaystyle l^{A}_{\bold{\Phi}}(X)$.

%%%%%%%%%%%%%%%%%%%%%%%%%%%%%%%%%%%%%%%%%%%%%%%%%%%%%%%%%%%%%%%%%%%%%%%%%%%%%%%%%%%%%%%%%
%%%%%%%%%%%%%%%%%%%%%%%%%%%%%%%%%%%%%%%%%%%%%%%%%%%%%%%%%%%%%%%%%%%%%%%%%%%%%%%5%%
%%%%%%%%%%% New thing included%%%%%%%%%%%%%%%%%%%%%%%%%%%%%%%%%%%%%%%%%%%
\begin{pro}
The following conditions are equivalent.\\
(1). $ \displaystyle l^{A}_{\bold{\Phi}}(X) \neq \{\bar{0} \}.$\\
(2). For some $k \in \mathbb{N}$, $(|a_{nk}|)_{n=1}^{\infty} \in l_{\bold{\Phi}}.$
\end{pro}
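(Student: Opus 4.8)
The plan is to prove the two implications $(2)\Rightarrow(1)$ and $(1)\Rightarrow(2)$ separately. In both directions the only real ingredient is the monotonicity of each Orlicz function $\phi_n$ together with the elementary estimate $\sum_{j=1}^{\infty}\|a_{nj}x_{j}\|\geq\|a_{nk}x_{k}\|=|a_{nk}|\,\|x_{k}\|$, valid for any fixed index $k$.

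For $(2)\Rightarrow(1)$: assume $(|a_{nk}|)_{n=1}^{\infty}\in l_{\bold{\Phi}}$ for some $k\in\mathbb{N}$, so there is $\sigma>0$ with $\sum_{n=1}^{\infty}\phi_n(|a_{nk}|/\sigma)<\infty$. Choose $x_0\in X$ with $\|x_0\|=1$ and let $\bar{x}=(x_j)$ be given by $x_k=x_0$ and $x_j=0$ for $j\neq k$. Then $\bar{x}$ is a nonzero element of $l_{\infty}(X)$, and for every $n$ one has $\sum_{j=1}^{\infty}\|a_{nj}x_j\|=|a_{nk}|$; hence $\varrho_{\bold{\Phi}}^{A}(\bar{x}/\sigma)=\sum_{n=1}^{\infty}\phi_n(|a_{nk}|/\sigma)<\infty$, so $\bar{x}\in l^{A}_{\bold{\Phi}}(X)$ and $l^{A}_{\bold{\Phi}}(X)\neq\{\bar{0}\}$.

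For $(1)\Rightarrow(2)$: take any nonzero $\bar{x}=(x_j)\in l^{A}_{\bold{\Phi}}(X)$. Then there is an index $k$ with $c:=\|x_k\|>0$, and a $\sigma>0$ for which $S:=\sum_{n=1}^{\infty}\phi_n\bigl(\sigma^{-1}\sum_{j=1}^{\infty}\|a_{nj}x_j\|\bigr)<\infty$. Since $\sum_{j=1}^{\infty}\|a_{nj}x_j\|\geq c\,|a_{nk}|$ and each $\phi_n$ is nondecreasing, we obtain $\phi_n\bigl(|a_{nk}|\big/(\sigma/c)\bigr)\leq\phi_n\bigl(\sigma^{-1}\sum_{j=1}^{\infty}\|a_{nj}x_j\|\bigr)$ for every $n$, and therefore $\sum_{n=1}^{\infty}\phi_n\bigl(|a_{nk}|\big/(\sigma/c)\bigr)\leq S<\infty$. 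With $\sigma'=\sigma/c>0$ this says exactly $(|a_{nk}|)_{n=1}^{\infty}\in l_{\bold{\Phi}}$, i.e. $(2)$ holds.

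Both directions are routine, so I do not anticipate a genuine obstacle. The two points worth stating carefully are the rescaling of the modular parameter $\sigma$ by the scalar $\|x_k\|$ when passing between $\varrho_{\bold{\Phi}}^{A}$ and the modular of $l_{\bold{\Phi}}$, and the observation that a sequence supported on a single coordinate automatically belongs to $l_{\infty}(X)$, so that no extra hypothesis on the matrix $A$ is needed for this proposition.
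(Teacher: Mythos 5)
Your proof is correct and follows essentially the same route as the paper's: for $(1)\Rightarrow(2)$ you bound the modular from below by the contribution of a single coordinate, and for $(2)\Rightarrow(1)$ you exhibit a sequence supported on one coordinate. If anything, your version is slightly more careful than the paper's in tracking the dilation parameter $\sigma$ (the paper evaluates the modular at $t=\|x_l\|$ without checking that this particular scaling gives a finite sum, whereas your normalization $\|x_0\|=1$ and use of $\bar{x}/\sigma$ lands exactly on the ``for some $\sigma>0$'' clause in the definition).
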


\begin{proof}
$(1) \Rightarrow (2)$.\\
Let $\bar{0} \neq \bar{x} \in \displaystyle l^{A}_{\bold{\Phi}}(X)$. Then there exists some $l \in \mathbb{N}$ such that
$x_l \in X$ and $x_l \neq 0$. Clearly $\bar{y} = (0, 0, \ldots, 0, x_l, 0, 0, \ldots) \in \displaystyle l^{A}_{\bold{\Phi}}(X).$
Thus for some $t> 0$, we have $\varrho_{\bold{\Phi}}^{A}(t \bar{y}) < \infty$. Now
\begin{align*}
\varrho_{\bold{\Phi}}^{A}(t \bar{y}) & = \sum_{n=1}^{\infty} \phi_n \bigg(t \sum\limits_{k=1}^{\infty}\|a_{nk}y_{k} \| \bigg)
= \sum_{n=1}^{\infty} \phi_n \Big (t |a_{nl}| \|x_{l}\| \Big).
\end{align*}
Choose $t_0 = t \|x_l \|>0$. Then
$\varrho_{\bold{\Phi}}^{A}(t \bar{y}) =\displaystyle \sum_{n=1}^{\infty} \phi_n \Big (t_0 |a_{nl}| \Big)= \varrho_{\bold{\Phi}}\Big(t_0 (|a_{nl}|)_{n=1}^{\infty}\Big).$
Hence $(|a_{nl}|)_{n=1}^{\infty} \in l_{\bold{\Phi}}$.\\\\
$(2) \Rightarrow (1)$.\\
Let for some $k \in \mathbb{N}$, $(|a_{nk}|)_{n=1}^{\infty} \in l_{\bold{\Phi}}.$ Define $\bar{x} = (0, 0, \ldots, 0, x_l, 0, 0, \ldots)$. Then $ \bar{x} \in \displaystyle l_{\infty}(X).$ Now for $k=l$, $t = \|x_l \|$, $\varrho_{\bold{\Phi}} \Big(t (|a_{nl}|)_{n=1}^{\infty}\Big)= \displaystyle \sum_{n=1}^{\infty} \phi_n \Big ( \|x_{l}\| |a_{nl}| \Big) < \infty$. Again
\begin{align*}
\varrho_{\bold{\Phi}}^{A}(1.\bar{x})  = \sum_{n=1}^{\infty} \phi_n \bigg(\sum\limits_{k=1}^{\infty}\|a_{nk}x_{k} \|\bigg)
= \sum_{n=1}^{\infty} \phi_n \Big(\|x_{l} \| |a_{nl}| \Big )
= \varrho_{\bold{\Phi}}\Big(t (|a_{nl}|)_{n=1}^{\infty}\Big) < \infty.
\end{align*}
This implies $\bar{x} \in \displaystyle l^{A}_{\bold{\Phi}}(X)$. This completes the proof.
\end{proof}
\textbf{Applications:}\\
%\textbf{Particular cases}\\
The linear space $l_{\bold{\Phi}}^{A}(X)$ contains many known sequence spaces as particular cases with the suitable choice of the matrix $A =(a_{nk})$. For example:\\
1. Choose the matrix $A=(a_{nk})$ as an identity matrix and $X= \mathbb{R}$, then $l_{\bold{\Phi}}^{A}(X)$ reduces to Musielak-Orlicz sequence space $\cite{KAT}$. In addition, if for all $n \in \mathbb{N}$, $\phi_n = \phi$, an Orlicz function then the space gives Orlicz sequence space.\\
2. Choose the matrix $A=(a_{nk})$ as a Ces\`{a}ro matrix of order $1$ and $X= \mathbb{R}$, then $l_{\bold{\Phi}}^{A}(X)$ reduces to Ces\`{a}ro-Orlicz sequence space $\cite{KUB}$.\\
% liner space $$\Big \{ x=(x_{n}) \in l_{\infty}(X) :  \sum_{n=1}^{\infty} \phi_n \bigg(\frac{\sum\limits_{k=1}^{n}\|x_{k} \|}{ n \sigma}\bigg) < \infty \quad {\rm{ for ~ some}}~~ \sigma > 0 \Big \}.$$
3. Choose the matrix $A=(a_{nk})$ such that $a_{nk} = n^{\frac{1}{p} - \frac{1}{q}}$ for $n =k$ and $0$ otherwise, where $p, q >0$ and suppose the non-increasing
rearrangement of $\bar{x} = (x_n) \in l_{\infty}(X)$, denoted by $(x_n^{*})$ is given as
$$x_n^{*} = \inf \{c \geq 0: | \{i \in \mathbb{N}: \|x_i \| > c \}| < n \},$$
where the vertical bars indicate number of elements in the enclosed set,
%where Card($\bar{x}$) is defined as the cardinality of the set $\{ i \in \mathbb{N}  : x_i \neq 0 \}$ then
  then $l_{\bold{\Phi}}^{A}(X)$ reduces to Orlicz-Lorentz space \cite{FOR3}. In addition, if $\phi(x) = x^q$, then the space reduces to Lorentz space $l_{p, q}$ \cite{PIE6}.\\
4. Choose $\phi_n = x^p, 0<p< \infty$ for all $n$ and $X= \mathbb{R}$, then $l_{\bold{\Phi}}^{A}(X)$ reduces to $A-p$ spaces denoted by $|A, p|$ studied by Rhoades
\cite{RHO1}.

Throughout the paper, we assume that $A=(a_{nk}) \in \mathcal{A}$. Let $ \bar{x} \in l_{\bold{\Phi}}^{A}(X)$ and define
\begin{equation}{\label{eq1}}
\|\bar{x} \|_{\bold{\Phi}}^{A} = \inf \Big \{ \sigma > 0 : \sum_{n=1}^{\infty} \phi_n \bigg(\frac{\sum\limits_{k=1}^{\infty}\|a_{nk}x_{k} \|}{ \sigma}\bigg) \leq 1 \Big \}.
\end{equation}
\begin{Theorem}
The space $ \displaystyle l^{A}_{\bold{\Phi}}(X)$ is a complete normed linear space with the norm $ \|. \|_{\bold{\Phi}}^{A}$ as defined by $(\ref{eq1}).$
\end{Theorem}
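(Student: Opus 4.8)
The plan is to treat the two assertions separately, using throughout that for $A\in\mathcal{A}$ the functional $\varrho_{\bold{\Phi}}^{A}$ is a convex modular on $l^{A}_{\bold{\Phi}}(X)$, so that $\|\cdot\|_{\bold{\Phi}}^{A}$ defined by $(\ref{eq1})$ is exactly the Luxemburg norm it induces. First I would record that the infimum in $(\ref{eq1})$ is over a nonempty set: given $\bar{x}\in l^{A}_{\bold{\Phi}}(X)$ there is $\sigma_{0}>0$ with $S:=\varrho_{\bold{\Phi}}^{A}(\bar{x}/\sigma_{0})<\infty$, and convexity together with $\varrho_{\bold{\Phi}}^{A}(\bar{0})=0$ gives $\varrho_{\bold{\Phi}}^{A}(\bar{x}/(\lambda\sigma_{0}))\le S/\lambda\le 1$ once $\lambda\ge\max\{1,S\}$. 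Positive homogeneity is immediate from the substitution $\sigma\mapsto\sigma/|\lambda|$, and the triangle inequality is the classical one: if $\varrho_{\bold{\Phi}}^{A}(\bar{x}/\sigma_{1})\le1$ and $\varrho_{\bold{\Phi}}^{A}(\bar{y}/\sigma_{2})\le1$, then writing $(\bar{x}+\bar{y})/(\sigma_{1}+\sigma_{2})$ as the convex combination $\tfrac{\sigma_{1}}{\sigma_{1}+\sigma_{2}}(\bar{x}/\sigma_{1})+\tfrac{\sigma_{2}}{\sigma_{1}+\sigma_{2}}(\bar{y}/\sigma_{2})$ and using convexity of $\varrho_{\bold{\Phi}}^{A}$ yields $\varrho_{\bold{\Phi}}^{A}\big((\bar{x}+\bar{y})/(\sigma_{1}+\sigma_{2})\big)\le1$, so $\|\bar{x}+\bar{y}\|_{\bold{\Phi}}^{A}\le\sigma_{1}+\sigma_{2}$ and one takes infima over admissible $\sigma_{1},\sigma_{2}$. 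That $\|\cdot\|_{\bold{\Phi}}^{A}$ separates points is where $A\in\mathcal{A}$ enters: if $x_{k}\ne0$ for some $k$, pick $n_{0}$ with $a_{n_{0}k}\ne0$; then $\varrho_{\bold{\Phi}}^{A}(\bar{x}/\sigma)\ge\phi_{n_{0}}\big(|a_{n_{0}k}|\,\|x_{k}\|/\sigma\big)\to\infty$ as $\sigma\to0^{+}$, so no small $\sigma$ satisfies $\varrho_{\bold{\Phi}}^{A}(\bar{x}/\sigma)\le1$ and therefore $\|\bar{x}\|_{\bold{\Phi}}^{A}>0$.

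For completeness, let $(\bar{x}^{(m)})_{m}$, with $\bar{x}^{(m)}=(x^{(m)}_{k})_{k}$, be $\|\cdot\|_{\bold{\Phi}}^{A}$-Cauchy. The first step is a coordinatewise limit. Fixing $k$ and $n_{0}$ with $a_{n_{0}k}\ne0$, for $\epsilon>0$ choose $N$ with $\|\bar{x}^{(m)}-\bar{x}^{(l)}\|_{\bold{\Phi}}^{A}<\epsilon$ for $m,l\ge N$; then $\varrho_{\bold{\Phi}}^{A}\big((\bar{x}^{(m)}-\bar{x}^{(l)})/\epsilon\big)\le1$, whence $\phi_{n_{0}}\big(|a_{n_{0}k}|\,\|x^{(m)}_{k}-x^{(l)}_{k}\|/\epsilon\big)\le1$ and so $\|x^{(m)}_{k}-x^{(l)}_{k}\|\le\epsilon\,\phi_{n_{0}}^{-1}(1)/|a_{n_{0}k}|$. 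Thus $(x^{(m)}_{k})_{m}$ is Cauchy in the Banach space $X$; put $x_{k}=\lim_{m}x^{(m)}_{k}$ and $\bar{x}=(x_{k})$. The second step passes the modular bound to this limit through truncations: for $m,l\ge N$ and all finite $M,K$,
\[
\sum_{n=1}^{M}\phi_{n}\!\bigg(\frac{\sum_{k=1}^{K}\|a_{nk}(x^{(m)}_{k}-x^{(l)}_{k})\|}{\epsilon}\bigg)\ \le\ \varrho_{\bold{\Phi}}^{A}\!\bigg(\frac{\bar{x}^{(m)}-\bar{x}^{(l)}}{\epsilon}\bigg)\ \le\ 1,
\]
and letting $l\to\infty$ (finitely many terms, continuity of the norm on $X$ and of each $\phi_{n}$), then $M,K\to\infty$ (taking suprema), gives $\varrho_{\bold{\Phi}}^{A}\big((\bar{x}^{(m)}-\bar{x})/\epsilon\big)\le1$ for every $m\ge N$.

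It remains to conclude that $\bar{x}^{(m)}-\bar{x}\in l^{A}_{\bold{\Phi}}(X)$ with $\|\bar{x}^{(m)}-\bar{x}\|_{\bold{\Phi}}^{A}\le\epsilon$ for $m\ge N$; since $l^{A}_{\bold{\Phi}}(X)$ is a linear space containing each $\bar{x}^{(m)}$, it then follows that $\bar{x}=\bar{x}^{(m)}-(\bar{x}^{(m)}-\bar{x})\in l^{A}_{\bold{\Phi}}(X)$, and, $\epsilon$ being arbitrary, $\bar{x}^{(m)}\to\bar{x}$ in norm, proving completeness. The part I expect to cost the most work is precisely this membership claim: the estimate above only controls $\varrho_{\bold{\Phi}}^{A}$, so one must separately verify that $\bar{x}$ — equivalently $\bar{x}^{(m)}-\bar{x}$ — lies in $l_{\infty}(X)$, which is where the standing hypothesis on the matrix is used, together with the coordinatewise bounds above and the boundedness of the Cauchy sequence; one must also justify carefully the two interchanges of limits (first in $l$, then in $M$ and $K$) in the truncated double sum.
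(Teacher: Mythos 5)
Your argument is essentially the paper's: the Luxemburg-norm axioms via convexity of $\varrho_{\bold{\Phi}}^{A}$, definiteness via a nonzero column entry $a_{n_{0}k}$, and completeness by extracting a coordinatewise limit and passing the modular bound $\varrho_{\bold{\Phi}}^{A}\big((\bar{x}^{(m)}-\bar{x}^{(l)})/\epsilon\big)\le 1$ to the limit. You are in fact more careful than the paper at the one analytic step it glosses over: the paper simply ``lets $m\to\infty$'' inside the infinite double sum, whereas your truncation to finite $M,K$ followed by taking suprema is the correct Fatou-type justification, using that all terms are nonnegative and the partial sums increase.

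The one genuine gap is the point you flag but do not close: membership of $\bar{x}$ (equivalently of $\bar{x}^{(m)}-\bar{x}$) in $l_{\infty}(X)$, which the definition of $l^{A}_{\bold{\Phi}}(X)$ requires before the finite-modular condition even applies. The ingredients you cite do not suffice. From $\varrho_{\bold{\Phi}}^{A}\big((\bar{x}^{(m)}-\bar{x})/\epsilon\big)\le 1$ you only get, for each $k$, a bound $\|x^{(m)}_{k}-x_{k}\|\le \epsilon\, c_{n_{0}(k)}/|a_{n_{0}(k)k}|$ with $c_{n}=\sup\{t:\phi_{n}(t)\le 1\}$, and the hypothesis $A\in\mathcal{A}$ (each column has \emph{some} nonzero entry) gives no uniform control of $c_{n_{0}(k)}/|a_{n_{0}(k)k}|$ over $k$; the entries $a_{n_{0}(k)k}$ may tend to $0$ and the $c_{n}$ may tend to $\infty$. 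So boundedness of $(x_{k})_{k}$ does not follow from the stated standing assumptions, and one needs either an additional hypothesis on $A$ and $\bold{\Phi}$ (for instance $\inf_{k}\sup_{n}|a_{nk}|>0$ together with $\sup_{n}c_{n}<\infty$, which restores a uniform coordinate estimate) or a reformulation of the space without the a priori $l_{\infty}(X)$ constraint. Be aware that the paper's own proof is silent on exactly this point, so you have not missed an argument that exists in the source; you have isolated a defect in it.
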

\begin{proof}
First we prove that $ \|. \|_{\bold{\Phi}}^{A}$ is a norm on the space  $ \displaystyle l^{A}_{\bold{\Phi}}(X)$.\\
Let $\|\bar{x} \|_{\bold{\Phi}}^{A} = 0$. Then by definition given by \eqref{eq1}, for all $\sigma > 0$ there exists $C>0$ such that for all $n$
\begin{align*}
\frac{\sum\limits_{k=1}^{\infty}\|a_{nk}x_{k} \|}{ \sigma}  \leq C
\quad \Rightarrow |a_{nk}| \|x_{k} \|  \leq C \sigma
\end{align*}
holds for all $n$ and any arbitrary $\sigma >0$. Thus $\|x_{k} \| =0$, hence $x_k =0$ for all $k$. Therefore $\bar{x} = \bar 0$.
Clearly if $\bar{x} = \bar 0$, then $\|\bar{x} \|_{\bold{\Phi}}^{A} = 0$.\\
Let $\bar{x} =(x_k), \bar{y} =(y_k) \in l^{A}_{\bold{\Phi}}(X)$. Let $\epsilon > 0$ be any number. Then there exists $\sigma_1> 0, \sigma_2> 0$ such that
$$ \sum_{n=1}^{\infty} \phi_n \bigg(\frac{\sum\limits_{k=1}^{\infty}\|a_{nk}x_{k} \|}{ \sigma_1}\bigg) \leq 1, \sigma_1 \leq \|\bar{x} \|_{\bold{\Phi}}^{A} + \frac{\epsilon}{2} \mbox{~and~}
%$$ and
%$$
\sum_{n=1}^{\infty} \phi_n \bigg(\frac{\sum\limits_{k=1}^{\infty}\|a_{nk}y_{k} \|}{ \sigma_2}\bigg) \leq 1, \sigma_2 \leq \|\bar{y} \|_{\bold{\Phi}}^{A} + \frac{\epsilon}{2}.$$
Using the convexity property of each $\phi_n$, we get
\begin{align*}
\sum_{n=1}^{\infty} \phi_n \bigg(\frac{\sum\limits_{k=1}^{\infty}\|a_{nk}(x_k + y_{k}) \|}{\sigma_1 + \sigma_2}\bigg) & \leq  \frac{\sigma_1}{\sigma_1 + \sigma_2} \sum_{n=1}^{\infty} \phi_n \bigg(\frac{\sum\limits_{k=1}^{\infty}\|a_{nk}x_k \|}{\sigma_1}\bigg) + \frac{\sigma_2}{\sigma_1 + \sigma_2}\sum_{n=1}^{\infty} \phi_n \bigg(\frac{\sum\limits_{k=1}^{\infty}\|a_{nk} y_{k}\|}{\sigma_2}\bigg) \\
& \leq 1.
\end{align*}
Thus $$\|\bar{x} + \bar{y}\|_{\bold{\Phi}}^{A} \leq {\sigma_1 + \sigma_2} \leq \|\bar{x} \|_{\bold{\Phi}}^{A} + \| \bar{y}\|_{\bold{\Phi}}^{A} + \epsilon.$$
Since $\epsilon > 0$ is arbitrary, we have $\|\bar{x} + \bar{y}\|_{\bold{\Phi}}^{A}  \leq \|\bar{x} \|_{\phi}^{A} + \| \bar{y}\|_{\bold{\Phi}}^{A}$. It is easy to show that
$\| \alpha \bar{x} \|_{\bold{\Phi}}^{A} = |\alpha| \| \bar{x} \|_{\bold{\Phi}}^{A}$ for any scalar $\alpha$. Hence $\| .\|_{\bold{\Phi}}^{A}$ is a norm on the space $l_{\bold{\Phi}}^{A}(X)$.\\
To show $l_{\bold{\Phi}}^{A}(X)$ is complete, let $(\bar{x}^{(m)})$ be a Cauchy sequence in $l_{\bold{\Phi}}^{A}(X)$. Then for each $\epsilon > 0$, there exists $m_0 \in \mathbb{N}$
such that $\|\bar{x}^{(m)} - \bar{x}^{(l)}\|_{\bold{\Phi}}^{A} < \epsilon$ for all $m, l \geq m_0$, i.e.,
\begin{align}{\label{eq2}}
\sum_{n=1}^{\infty} \phi_n \bigg(\frac{\sum\limits_{k=1}^{\infty}\|a_{nk}(x^{(m)}_k - x^{(l)}_{k}) \|}{\epsilon}\bigg) \leq 1.
\end{align}
Thus the sequence $ \Big \{\frac{\|x_{k}^{(m)} - x^{(l)}_{k}\|}{ \epsilon}\Big \}$ is bounded. Therefore there exists $C > 0$ such that
$$\frac{\|x_{k}^{(m)} - x^{(l)}_{k}\|}{ \epsilon} \leq C \mbox{~~for all~} m, l \geq m_0$$
%We have for all $m, l \geq m_0$ and for any $\epsilon > 0$ $${\|x_{k}^{(m)} - x^{(l)}_{k}\|}\leq C { \epsilon}.$$
Hence $(x_k^{(m)})$ is a Cauchy sequence in $X$ for each $k$. Since $X$ is a Banach space, so $(x_k^{(m)})$ is convergent in $X$.
Let $x_k = \displaystyle\lim_{m \rightarrow \infty}x_k^{(m)}$ for each $k$.
Since each $\phi_k$ is continuous so taking $m \rightarrow \infty$, we get from (\ref{eq2}),
$$\sum_{n=1}^{\infty} \phi_n \bigg(\frac{\sum\limits_{k=1}^{\infty}\|a_{nk}(x_k - x^{(l)}_{k}) \|}{\epsilon}\bigg) \leq 1$$ for all $l \geq m_0$.
Thus $\|\bar{x} - \bar{x}^{(l)}\|_{\bold{\Phi}}^{A} < \epsilon$ for all $l\geq m_0$. Thus $(\bar{x}^{(l)})$ converges to $\bar{x}$ in $l_{\bold{\Phi}}^{A}(X)$.
Since $\bar{x} - \bar{x}^{(m_0)} \in l_{\bold{\Phi}}^{A}(X)$, so $\bar{x} = \bar{x}^{(m_0)} + (\bar{x} - \bar{x}^{(m_0)}) \in l_{\bold{\Phi}}^{A}(X)$.
This completes the proof.
\end{proof}

%\begin{pro}
%The space $ \displaystyle l^{A}_{\bold{\Phi}}(X)$ is solid.
%\end{pro}
%\begin{proof}
%We omit the proof as it is trivial.
%\end{proof}

\begin{Theorem}{\label{thm3}}
The space $ \displaystyle h^{A}_{\bold{\Phi}}(X)$ is an AK-BK space.
\end{Theorem}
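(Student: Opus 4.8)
The plan is to derive all three required properties---Banach completeness, continuity of the coordinate maps $\bar x\mapsto x_k$, and the AK property $\bar x|_i\to\bar x$---by realizing $h^A_{\bold{\Phi}}(X)$ as a \emph{closed} subspace of the Banach space $l^A_{\bold{\Phi}}(X)$ of Theorem~3.2, and then exploiting the extra strength of its defining condition, namely that the modular series converges for \emph{every} $\sigma>0$ rather than merely for some $\sigma$.

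First I would show $h^A_{\bold{\Phi}}(X)$ is closed in $(l^A_{\bold{\Phi}}(X),\|\cdot\|_{\bold{\Phi}}^{A})$. Suppose $\bar x^{(m)}\to\bar x$ in norm with each $\bar x^{(m)}\in h^A_{\bold{\Phi}}(X)$, and fix an arbitrary $\sigma>0$. Choose $m$ with $\|\bar x-\bar x^{(m)}\|_{\bold{\Phi}}^{A}<\sigma/2$; by the definition of the Luxemburg norm and monotonicity of the $\phi_n$ this gives $\varrho_{\bold{\Phi}}^{A}\big(2(\bar x-\bar x^{(m)})/\sigma\big)\le1<\infty$, while $\varrho_{\bold{\Phi}}^{A}\big(2\bar x^{(m)}/\sigma\big)<\infty$ since $\bar x^{(m)}\in h^A_{\bold{\Phi}}(X)$. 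Convexity of $\varrho_{\bold{\Phi}}^{A}$ then yields $\varrho_{\bold{\Phi}}^{A}(\bar x/\sigma)\le\frac12\varrho_{\bold{\Phi}}^{A}(2\bar x^{(m)}/\sigma)+\frac12\varrho_{\bold{\Phi}}^{A}(2(\bar x-\bar x^{(m)})/\sigma)<\infty$, and as $\sigma$ was arbitrary, $\bar x\in h^A_{\bold{\Phi}}(X)$. Thus $h^A_{\bold{\Phi}}(X)$ is a Banach space. For continuity of the coordinates I would use $A\in\mathcal{A}$: for each $k$ pick $n_0$ with $a_{n_0k}\ne0$; for any $\epsilon>0$ the inequality $\varrho_{\bold{\Phi}}^{A}\big(\bar x/(\|\bar x\|_{\bold{\Phi}}^{A}+\epsilon)\big)\le1$ forces $\phi_{n_0}\big(|a_{n_0k}|\,\|x_k\|/(\|\bar x\|_{\bold{\Phi}}^{A}+\epsilon)\big)\le1$, and since $\phi_{n_0}$ is finite-valued with $\phi_{n_0}(t)\to\infty$ there is a finite $c_{n_0}>0$ with $\phi_{n_0}(t)\le1\Rightarrow t\le c_{n_0}$; letting $\epsilon\downarrow0$ gives $\|x_k\|\le c_{n_0}\,|a_{n_0k}|^{-1}\|\bar x\|_{\bold{\Phi}}^{A}$. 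Hence $h^A_{\bold{\Phi}}(X)$ is a $BK$-space.

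Next, for the AK property, I would prove $\|\bar x-\bar x|_i\|_{\bold{\Phi}}^{A}=\|\bar x|_{\mathbb{N}-i}\|_{\bold{\Phi}}^{A}\to0$ for every $\bar x\in h^A_{\bold{\Phi}}(X)$ (note $\bar x|_i\in h^A_{\bold{\Phi}}(X)$ by monotonicity of the $\phi_n$). By the standard equivalence in modular spaces it suffices to show $\varrho_{\bold{\Phi}}^{A}(\lambda\,\bar x|_{\mathbb{N}-i})\to0$ as $i\to\infty$ for each fixed $\lambda>0$. Here $\varrho_{\bold{\Phi}}^{A}(\lambda\,\bar x|_{\mathbb{N}-i})=\sum_{n=1}^{\infty}\phi_n\big(\lambda\sum_{k=i+1}^{\infty}\|a_{nk}x_k\|\big)$. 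For each fixed $n$, finiteness of the $n$-th term of the convergent series $\sum_n\phi_n(\lambda\sum_k\|a_{nk}x_k\|)$ (convergent because $\bar x\in h^A_{\bold{\Phi}}(X)$) forces $\sum_{k=1}^{\infty}\|a_{nk}x_k\|<\infty$, so its tail $\sum_{k=i+1}^{\infty}\|a_{nk}x_k\|\to0$, and continuity of $\phi_n$ with $\phi_n(0)=0$ makes the $n$-th summand tend to $0$. Each summand is dominated by $\phi_n\big(\lambda\sum_{k=1}^{\infty}\|a_{nk}x_k\|\big)$, which is summable in $n$ exactly because $\bar x\in h^A_{\bold{\Phi}}(X)$. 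Dominated convergence for series then gives $\varrho_{\bold{\Phi}}^{A}(\lambda\,\bar x|_{\mathbb{N}-i})\to0$, hence $\|\bar x|_{\mathbb{N}-i}\|_{\bold{\Phi}}^{A}\to0$, establishing AK.

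The step I expect to be the main obstacle is this last one: one must carefully justify the passage from modular convergence ($\varrho_{\bold{\Phi}}^{A}(\lambda\,\bar x|_{\mathbb{N}-i})\to0$ for all $\lambda>0$) to norm convergence ($\|\bar x|_{\mathbb{N}-i}\|_{\bold{\Phi}}^{A}\to0$), and must notice that this is precisely where membership in $h^A_{\bold{\Phi}}(X)$ rather than merely $l^A_{\bold{\Phi}}(X)$ is used---the summable dominating sequence for a given $\lambda$ exists only because the modular series converges for every $\sigma$. The Luxemburg-norm bookkeeping (e.g.\ that $\|y\|_{\bold{\Phi}}^{A}<\tau$ implies $\varrho_{\bold{\Phi}}^{A}(y/\tau)\le1$) is routine and will be stated without much fuss.
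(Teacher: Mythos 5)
Your argument is correct and, for the closedness/completeness half, coincides with the paper's: both show that a norm-limit $\bar x$ of elements of $h^A_{\bold{\Phi}}(X)$ still has finite modular at every scale by writing $\bar x=(\bar x-\bar y^{(m)})+\bar y^{(m)}$ and using convexity and monotonicity of the $\phi_n$. You go beyond the paper in explicitly verifying coordinate continuity via $A\in\mathcal{A}$ and the boundedness of the sublevel set $\{t:\phi_{n_0}(t)\le 1\}$; the paper simply asserts that being a closed subspace of $l^A_{\bold{\Phi}}(X)$ settles the BK property. For the AK half the two routes genuinely differ. The paper chooses, for each $\epsilon$, an index $n_0$ with $\sum_{n\ge n_0}\phi_n\big(\sum_k\|a_{nk}x_k\|/\epsilon\big)\le 1$ and identifies $\varrho^A_{\bold{\Phi}}\big((\bar x-\bar x^{[m]})/\sigma\big)$ with a tail in $n$ of $\varrho^A_{\bold{\Phi}}(\bar x/\sigma)$; that identification tacitly uses that $a_{nk}=0$ for $k>n$ (a triangle), since for a general matrix $\sum_{k>m}\|a_{nk}x_k\|$ is a tail in $k$, not a restriction to rows $n>m$. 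Your dominated-convergence argument works row by row in $k$ and so needs no triangularity: each summand $\phi_n\big(\lambda\sum_{k>i}\|a_{nk}x_k\|\big)$ tends to $0$ (each row sum is finite because each term of the convergent modular series is finite and $\phi_n(t)\to\infty$), and is dominated by the summable sequence $\phi_n\big(\lambda\sum_k\|a_{nk}x_k\|\big)$, whose summability for every $\lambda$ is exactly membership in $h^A_{\bold{\Phi}}(X)$; the passage from modular null-convergence at every scale to norm null-convergence is the standard Luxemburg-norm fact you cite. So your proof is valid, and if anything it covers the general $A\in\mathcal{A}$ more cleanly than the paper's own tail computation.
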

\begin{proof}
Clearly $ \displaystyle h^{A}_{\bold{\Phi}}(X)$ is a linear subspace of $ \displaystyle l^{A}_{\bold{\Phi}}(X)$.
To prove $ \displaystyle h^{A}_{\bold{\Phi}}(X)$ is a BK space, it is enough to prove that the space $ \displaystyle h^{A}_{\bold{\Phi}}(X)$ is a
closed subspace of $ \displaystyle l^{A}_{\bold{\Phi}}(X)$. Let $\bar{x}$ belongs to closure of $ \displaystyle h^{A}_{\bold{\Phi}}(X)$ in the norm topology of $ \displaystyle l^{A}_{\bold{\Phi}}(X)$. Then there exists a sequence $(\bar{y}^{(m)})$ in $ \displaystyle h^{A}_{\bold{\Phi}}(X)$, where
$\bar{y}^{(m)} = (y_k^{(m)})_{k \geq 1}$
such that $\|\bar{y}^{(m)} - \bar{x} \|_{\bold{\Phi}}^{A} \rightarrow 0$ as $m \rightarrow \infty$. So for any $\epsilon > 0$
there exists $m_0 \in \mathbb{N}$ such that
\begin{align*}
\sum_{n=1}^{\infty} \phi_n \bigg(\frac{\sum\limits_{k=1}^{\infty}\|a_{nk}(x_k - y_k^{(m)})\|}{\epsilon}\bigg) \leq 1 \quad \mbox{for all} ~m \geq m_0.
\end{align*}
Now for any $\epsilon >0$,
\begin{align*}
\sum_{n=1}^{\infty} \phi_n \bigg(\frac{\sum\limits_{k=1}^{\infty}\|a_{nk}x_k\|}{2 \epsilon}\bigg) & %= \sum_{n=1}^{\infty} \phi_n \bigg(\frac{\sum\limits_{k=1}^{\infty}\|a_{nk}(x_k - y_k^{(m_0)} + y_k^{(m_0)} )\|}{2 \epsilon}\bigg)\\
 \leq \sum_{n=1}^{\infty} \phi_n \bigg(\frac{1}{2}\frac{\sum\limits_{k=1}^{\infty}\|a_{nk}(x_k - y_k^{(m_0)})\|}{\epsilon} + \frac{1}{2}\frac{\sum\limits_{k=1}^{\infty}\|a_{nk}y_k^{(m_0)}\|}{\epsilon} \bigg)\\
& \leq \frac{1}{2} \bigg[\sum_{n=1}^{\infty} \phi_n \bigg(\frac{\sum\limits_{k=1}^{\infty}\|a_{nk}(x_k - y_k^{(m_0)})\|}{\epsilon}\bigg)  + \sum_{n=1}^{\infty} \phi_n \bigg(\frac{\sum\limits_{k=1}^{\infty}\|a_{nk}y_k^{(m_0)}\|}{\epsilon} \bigg) \bigg]
 < \infty.
%\mbox{for any~~} \epsilon >0.
\end{align*}
Thus $\bar{x} \in \displaystyle h^{A}_{\bold{\Phi}}(X)$.

To show $ \displaystyle h^{A}_{\bold{\Phi}}(X)$ is an AK-space, let $\bar x \in \displaystyle h^{A}_{\bold{\Phi}}(X)$.
Now for each $\epsilon, 0< \epsilon<1$ there exists $n_{0} \in \mathbb{N}$ such that
%\begin{align*}
$\displaystyle\sum_{n = n_{0}}^{\infty} \phi_n \bigg(\frac{\sum\limits_{k=1}^{\infty}\|a_{nk}x_k\|}{\epsilon}\bigg) \leq 1.$
%\end{align*}
Now for $m \geq n_0$,
\begin{align*}
\|\bar x -\bar{x}^{[m]} \|_{\bold{\Phi}}^{A} & = \inf \bigg\{ \sigma > 0: \sum_{n = m+1 }^{\infty} \phi_n \bigg(\frac{\sum\limits_{k=1}^{\infty}\|a_{nk}x_k\|}{\sigma}\bigg) \leq 1 \bigg \}\\
&\leq \inf \bigg\{ \sigma > 0: \sum_{n = m }^{\infty} \phi_n \bigg(\frac{\sum\limits_{k=1}^{\infty}\|a_{nk}x_k\|}{\sigma}\bigg) \leq 1 \bigg \}\leq \epsilon.
\end{align*}
This shows that $x^{[m]} \rightarrow x$ as $m \rightarrow \infty$ under the norm $\|. \|_{\bold{\phi}}^{A}$. Hence $ \displaystyle h^{A}_{\bold{\Phi}}(X)$ is an AK space. This completes the proof.
\end{proof}

\begin{Theorem}{\label{thm40}}
If a Musielak-Orlicz function $\bold{\Phi} = (\phi_{n}) \in \delta_2$, then $ \displaystyle l^{A}_{\bold{\Phi}}(X) = \displaystyle h^{A}_{\bold{\Phi}}(X)$.
\end{Theorem}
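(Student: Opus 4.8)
The plan is to prove the two inclusions separately. The inclusion $h^{A}_{\bold{\Phi}}(X) \subseteq l^{A}_{\bold{\Phi}}(X)$ is immediate from the definitions (a condition holding for all $\sigma>0$ in particular holds for some $\sigma>0$), so the content is the reverse inclusion, and it is here that the hypothesis $\bold{\Phi}\in\delta_2$ will be used.

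Fix $\bar{x} = (x_k) \in l^{A}_{\bold{\Phi}}(X)$ and set $u_n = \sum_{k=1}^{\infty}\|a_{nk}x_k\|$, so that $\sum_{n=1}^{\infty}\phi_n(u_n/\sigma_0) < \infty$ for some $\sigma_0 > 0$. To conclude $\bar{x} \in h^{A}_{\bold{\Phi}}(X)$ I must verify that $\sum_{n=1}^{\infty}\phi_n(u_n/\sigma) < \infty$ for every $\sigma > 0$. For $\sigma \geq \sigma_0$ this is immediate from the monotonicity of each $\phi_n$, since then $u_n/\sigma \leq u_n/\sigma_0$; hence I may assume $0 < \sigma < \sigma_0$ and put $\beta = \sigma_0/\sigma > 1$.

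Now I would invoke the equivalent formulation of $\delta_2$ recorded just before the statement: for this particular $\beta>1$ there exist $K > 0$, $\delta > 0$ and a nonnegative scalar sequence $(c_n) \in l_1$ such that $\phi_n(\beta t) \leq K\phi_n(t) + c_n$ whenever $\phi_n(t) \leq \delta$. Because $\sum_n \phi_n(u_n/\sigma_0)$ converges, its terms tend to $0$, so there is $N\in\mathbb{N}$ with $\phi_n(u_n/\sigma_0) \leq \delta$ for all $n \geq N$. Applying the inequality with $t = u_n/\sigma_0$ for $n \geq N$ gives $\phi_n(u_n/\sigma) = \phi_n(\beta\, u_n/\sigma_0) \leq K \phi_n(u_n/\sigma_0) + c_n$, and summing over $n \geq N$ yields a finite bound, since $\sum_n \phi_n(u_n/\sigma_0) < \infty$ and $(c_n) \in l_1$. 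The remaining finitely many terms $\phi_n(u_n/\sigma)$ with $n < N$ are individually finite because each $\phi_n$ is real-valued; hence $\sum_{n=1}^{\infty}\phi_n(u_n/\sigma) < \infty$. As $\sigma>0$ was arbitrary, $\bar{x} \in h^{A}_{\bold{\Phi}}(X)$, and the proof is complete.

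There is no serious obstacle; the only points needing a little care are the reduction to the case $\sigma < \sigma_0$ (so that the dilation factor $\beta$ is genuinely $>1$, as $\delta_2$ requires), the remark that convergence of $\sum_n \phi_n(u_n/\sigma_0)$ forces $\phi_n(u_n/\sigma_0) \to 0$ — precisely what makes the smallness hypothesis $\phi_n(t) \leq \delta$ valid from some index onwards — and the separate treatment of the finitely many initial indices. Iterating the one-step $\delta_2$ inequality to reach an arbitrary $\beta$ is unnecessary, since the ``for all $\beta>1$'' version is already available from the preliminaries.
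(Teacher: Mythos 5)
Your proposal is correct and follows essentially the same route as the paper: reduce to the inclusion $l^{A}_{\bold{\Phi}}(X)\subseteq h^{A}_{\bold{\Phi}}(X)$, handle the easy scaling direction by monotonicity, and for the other direction apply the ``for all $\beta>1$'' form of the $\delta_2$ condition to the tail where the terms $\phi_n(u_n/\sigma_0)$ have dropped below the threshold $\delta$, summing the resulting bound $K\phi_n(u_n/\sigma_0)+c_n$. Your write-up is in fact slightly cleaner than the paper's on one point: you fix $\delta$ from the $\delta_2$ condition before choosing the tail index $N$, and you explicitly dispose of the finitely many initial terms.
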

\begin{proof}
It suffices to show that $\displaystyle l^{A}_{\bold{\Phi}}(X) \subseteq \displaystyle h^{A}_{\bold{\Phi}}(X)$. Let $\bar{x} \in \displaystyle l^{A}_{\bold{\Phi}}(X)$. Then for some $t>0$, we have
\begin{center}
$\varrho_{\bold{\Phi}}^{A}(t\bar{x}) =\displaystyle \sum_{n=1}^{\infty} \phi_n \bigg(t \sum\limits_{k=1}^{\infty}\|a_{nk}x_k\| \bigg) < \infty.$
\end{center}
Therefore for $\delta_{0}> 0$ there exists $n_0 \in \mathbb{N}$ such that $\displaystyle \phi_n \bigg(t \sum\limits_{k=1}^{\infty}\|a_{nk}x_k\| \bigg)< \delta_{0}$
for all $n \geq n_{0}$. We have to show that for any $r> 0$, $\varrho_{\bold{\Phi}}^{A}(r \bar{x}) < \infty$. If $r$ be any number such that $r \leq t$, then by the non-decreasing property of $\phi_n$ for all $n$, we have
$ \varrho_{\bold{\Phi}}^{A}(r\bar{x}) \leq \varrho_{\bold{\Phi}}^{A}(t\bar{x}) < \infty.$

If $r> t$, choose $\beta>1$ such that $r < \beta t$. Thus
\begin{center}
$ \phi_n \bigg(r \sum\limits_{k=1}^{\infty}\|a_{nk}x_k\| \bigg) \leq  \phi_n \bigg(\beta t \sum\limits_{k=1}^{\infty}\|a_{nk}x_k\| \bigg).$
\end{center}
Since $\bold{\Phi} =(\phi_n) \in \delta_2$, so there exist $K > 0$, $\delta_0 > 0$ and a non negative sequence $(c_n) \in l_1$ such that
for $n \geq n_0$
$$\phi_n \bigg(\beta t \sum\limits_{k=1}^{\infty}\|a_{nk}x_k\| \bigg) \leq K \phi_n \bigg( t \sum\limits_{k=1}^{\infty}\|a_{nk}x_k\| \bigg) + c_n, $$
whenever $\phi_n \bigg(t \sum\limits_{k=1}^{\infty}\|a_{nk}x_k\| \bigg) < \delta_0.$ Therefore
\begin{center}
$ \displaystyle \sum_{n=n_0}^{\infty}\phi_n \bigg(\beta t \sum\limits_{k=1}^{\infty}\|a_{nk}x_k\| \bigg) \leq K \displaystyle \sum_{n=n_0}^{\infty} \phi_n \bigg( t \sum\limits_{k=1}^{\infty}\|a_{nk}x_k\| \bigg) + \displaystyle \sum_{n=n_0}^{\infty} c_n. $
\end{center}
Since $ \varrho_{\bold{\Phi}}^{A}(t\bar{x})< \infty$, so we have $ \displaystyle \sum_{n=n_0}^{\infty}\phi_n \bigg(r \sum\limits_{k=1}^{\infty}\|a_{nk}x_k\| \bigg)< \infty$. Thus for any $r> 0$, we have $\varrho_{\bold{\Phi}}^{A}(r\bar{x}) < \infty.$ Hence $\bar{x} \in \displaystyle h^{A}_{\bold{\Phi}}(X).$ This completes the proof.
\end{proof}

\begin{example}
Let $\phi_n(u) = u^{p_n}\Big( log(1 + u) +1 \Big)$ for $u \geq 0$, $1 \leq p_n < \infty$ with $\displaystyle \sup_{n \geq 1} p_n < \infty$.
Then $\bold{\Phi} = (\phi_{n}) $ is a Musielak-Orlicz function and satisfies $\delta_2$ condition. Hence $ \displaystyle l^{A}_{\bold{\Phi}}(X) = \displaystyle h^{A}_{\bold{\Phi}}(X)$.
In addition if we take $A$ as an identity matrix then $ \displaystyle l_{\bold{\Phi}}(X) \subseteq l(\{p_n \}, X)$, where $ l(\{p_n \}, X)$ denotes vector-valued
Nakano sequence space.
\end{example}

Now we state some known lemmas which will be needed in the sequel. Proofs of the corresponding lemmas run parallel lines as in the references (see \cite{CUI2}, \cite{CUI}, \cite{KAM}).
\begin{Lemma}{\label{2}}
Let $\bar{x} \in \displaystyle h^{A}_{\bold{\Phi}}(X) $ be an arbitrary element. Then
$\|\bar{x}\|_{\bold{\Phi}}^A=1$ if and only if $\varrho_{\bold{\Phi}}^{A}(\bar{x})=1$.
\end{Lemma}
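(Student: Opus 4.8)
The plan is to prove both implications by exploiting the interplay between the convex modular $\varrho_{\bold{\Phi}}^{A}$ and the Luxemburg-type norm $\|\cdot\|_{\bold{\Phi}}^{A}$, together with the continuity of $\sigma\mapsto\varrho_{\bold{\Phi}}^{A}(\bar{x}/\sigma)$ that is available on $h^{A}_{\bold{\Phi}}(X)$ (where the modular is finite for \emph{every} scaling). First I would record the two elementary monotonicity facts: for $\bar{x}\neq\bar 0$ the map $\lambda\mapsto\varrho_{\bold{\Phi}}^{A}(\lambda\bar{x})$ is nondecreasing on $[0,\infty)$ (each $\phi_n$ is nondecreasing), and it is in fact strictly increasing wherever it is positive, because $A\in\mathcal{A}$ forces $\sum_k\|a_{nk}x_k\|>0$ for at least one $n$ and $\phi_n$ is strictly increasing on $(0,\infty)$. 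I would also note the convexity inequality $\varrho_{\bold{\Phi}}^{A}(\lambda\bar{x})\le\lambda\,\varrho_{\bold{\Phi}}^{A}(\bar{x})$ for $0\le\lambda\le 1$.

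For the implication $\varrho_{\bold{\Phi}}^{A}(\bar{x})=1\Rightarrow\|\bar{x}\|_{\bold{\Phi}}^{A}=1$: from $\varrho_{\bold{\Phi}}^{A}(\bar{x})\le 1$ the definition \eqref{eq1} gives $\|\bar{x}\|_{\bold{\Phi}}^{A}\le 1$ immediately (take $\sigma=1$). Suppose for contradiction $\|\bar{x}\|_{\bold{\Phi}}^{A}=c<1$. Then there is $\sigma$ with $c\le\sigma<1$ and $\varrho_{\bold{\Phi}}^{A}(\bar{x}/\sigma)\le 1$. Since $1/\sigma>1$, pick $\lambda=\sigma\in(0,1)$ and apply convexity to $\bar{y}=\bar{x}/\sigma$: $1=\varrho_{\bold{\Phi}}^{A}(\bar{x})=\varrho_{\bold{\Phi}}^{A}(\sigma\cdot\bar{x}/\sigma)\le\sigma\,\varrho_{\bold{\Phi}}^{A}(\bar{x}/\sigma)\le\sigma<1$, a contradiction. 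Hence $\|\bar{x}\|_{\bold{\Phi}}^{A}=1$. (Note this direction does not even need $\bar{x}\in h^{A}_{\bold{\Phi}}(X)$.)

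For the converse $\|\bar{x}\|_{\bold{\Phi}}^{A}=1\Rightarrow\varrho_{\bold{\Phi}}^{A}(\bar{x})=1$: the definition of the infimum gives, for every $\eta>1$, a $\sigma\le\eta$ with $\varrho_{\bold{\Phi}}^{A}(\bar{x}/\sigma)\le 1$, and since $\|\bar{x}\|_{\bold{\Phi}}^{A}=1$ we can take $\sigma\in[1,\eta]$; letting $\eta\downarrow 1$ and using that $t\mapsto\varrho_{\bold{\Phi}}^{A}(\bar{x}/t)$ is right-continuous on $h^{A}_{\bold{\Phi}}(X)$ (finiteness of all scalings plus the monotone/dominated convergence theorem applied termwise, each $\phi_n$ being continuous), I get $\varrho_{\bold{\Phi}}^{A}(\bar{x})\le 1$. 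For the reverse inequality, if $\varrho_{\bold{\Phi}}^{A}(\bar{x})<1$ then by right-continuity of $\lambda\mapsto\varrho_{\bold{\Phi}}^{A}(\lambda\bar{x})$ at $\lambda=1$ there is $\lambda>1$ with $\varrho_{\bold{\Phi}}^{A}(\lambda\bar{x})\le 1$, i.e. $\varrho_{\bold{\Phi}}^{A}(\bar{x}/(1/\lambda))\le 1$ with $1/\lambda<1$, forcing $\|\bar{x}\|_{\bold{\Phi}}^{A}\le 1/\lambda<1$, contradicting $\|\bar{x}\|_{\bold{\Phi}}^{A}=1$. Therefore $\varrho_{\bold{\Phi}}^{A}(\bar{x})=1$.

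The main obstacle is the continuity/limit argument in the converse direction: one must justify that $\varrho_{\bold{\Phi}}^{A}(\lambda\bar{x})\to\varrho_{\bold{\Phi}}^{A}(\bar{x})$ as $\lambda\downarrow 1$ (and as $\lambda\uparrow 1$). This is precisely where membership in $h^{A}_{\bold{\Phi}}(X)$ is essential — it guarantees $\varrho_{\bold{\Phi}}^{A}(\mu\bar{x})<\infty$ for all $\mu$, so for $\lambda$ slightly above $1$ the tail $\sum_{n\ge N}\phi_n(\lambda\sum_k\|a_{nk}x_k\|)$ can be made uniformly small by dominating it with the finite quantity $\varrho_{\bold{\Phi}}^{A}(\lambda_0\bar{x})$ for a fixed $\lambda_0>1$, while the finite head converges by continuity of the finitely many $\phi_n$. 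I would handle this with a standard $\varepsilon$-split of the series into head and tail, which I will not grind through here.
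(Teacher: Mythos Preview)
The paper does not supply its own proof of this lemma; it merely states that the argument ``runs parallel lines as in the references'' \cite{CUI2}, \cite{CUI}, \cite{KAM}. Your proposal is exactly the classical modular-space argument those references contain, and it is correct: the implication $\varrho_{\bold{\Phi}}^{A}(\bar{x})=1\Rightarrow\|\bar{x}\|_{\bold{\Phi}}^{A}=1$ via the convexity estimate $\varrho_{\bold{\Phi}}^{A}(\sigma\bar{y})\le\sigma\,\varrho_{\bold{\Phi}}^{A}(\bar{y})$ works for any $\bar{x}$, while the converse uses precisely the feature of $h^{A}_{\bold{\Phi}}(X)$ that you isolate, namely that $\varrho_{\bold{\Phi}}^{A}(\lambda_0\bar{x})<\infty$ for some $\lambda_0>1$, which furnishes the dominating summable sequence needed to pass to the limit $\lambda\downarrow 1$ in $\varrho_{\bold{\Phi}}^{A}(\lambda\bar{x})$. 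One small clarification: in your first step of the converse (showing $\varrho_{\bold{\Phi}}^{A}(\bar{x})\le 1$), the passage $\sigma\downarrow 1$ in $\varrho_{\bold{\Phi}}^{A}(\bar{x}/\sigma)$ involves \emph{increasing} nonnegative terms, so monotone convergence alone suffices there and the $h^{A}_{\bold{\Phi}}(X)$ hypothesis is not yet needed; it enters only in the final contradiction step, exactly as you say.
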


\begin{Lemma}{\label{4}}
Let $\Phi\in\delta_2$. Then for any $\bar{x} \in \displaystyle l^{A}_{\bold{\Phi}}(X)$,
\begin{center}
 $\|\bar x\|_{\bold{\Phi}}^A =1 \mbox{~~if and only if~~} \varrho_{\bold{\Phi}}^{A}(\bar x)=1 $.
\end{center}
\end{Lemma}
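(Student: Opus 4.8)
The plan is to deduce this from the two preceding results rather than to argue from scratch. Since $\bold{\Phi} = (\phi_n) \in \delta_2$, Theorem \ref{thm40} yields $l^{A}_{\bold{\Phi}}(X) = h^{A}_{\bold{\Phi}}(X)$; hence every $\bar{x} \in l^{A}_{\bold{\Phi}}(X)$ already lies in $h^{A}_{\bold{\Phi}}(X)$, and the desired equivalence is then exactly the statement of Lemma \ref{2}. So the substance of the argument is simply chaining these two facts in the correct order, and the sole purpose of the hypothesis $\bold{\Phi}\in\delta_2$ is to identify the two spaces.

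For completeness I would also record the direct reasoning that lies behind Lemma \ref{2}, now applied on all of $l^{A}_{\bold{\Phi}}(X)$. Fix $\bar{x}\in l^{A}_{\bold{\Phi}}(X)$, $\bar{x}\neq\bar{0}$ (the case $\bar{x}=\bar{0}$ being trivial), and write $s_n=\sum_{k=1}^{\infty}\|a_{nk}x_k\|$. The key step is that $\lambda\mapsto\varrho_{\bold{\Phi}}^{A}(\lambda\bar{x})=\sum_{n=1}^{\infty}\phi_n(\lambda s_n)$ is nondecreasing, convex, and right-continuous on $[0,\infty)$. Monotonicity and convexity are immediate from the corresponding properties of the $\phi_n$. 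Right-continuity is the place where $\delta_2$ is genuinely used: by Theorem \ref{thm40} we have $\varrho_{\bold{\Phi}}^{A}(\mu_0\bar{x})<\infty$ for any fixed $\mu_0>1$, which furnishes a summable majorant, so letting $\mu\downarrow 1$ and applying the dominated convergence theorem to $\sum_n\phi_n(\mu s_n)$ gives $\varrho_{\bold{\Phi}}^{A}(\mu\bar{x})\to\varrho_{\bold{\Phi}}^{A}(\bar{x})$. I expect this to be the main, though still mild, technical point.

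Granting this, the equivalence follows by the usual Luxemburg-norm manipulation. If $\varrho_{\bold{\Phi}}^{A}(\bar{x})=1$, then $\|\bar{x}\|_{\bold{\Phi}}^{A}\leq 1$ by \eqref{eq1}; and for $0<\sigma<1$, convexity with $\phi_n(0)=0$ gives $\phi_n(s_n/\sigma)\geq\phi_n(s_n)/\sigma$, whence $\varrho_{\bold{\Phi}}^{A}(\bar{x}/\sigma)\geq\varrho_{\bold{\Phi}}^{A}(\bar{x})/\sigma=1/\sigma>1$, so $\|\bar{x}\|_{\bold{\Phi}}^{A}\geq 1$, and therefore $\|\bar{x}\|_{\bold{\Phi}}^{A}=1$. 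Conversely, if $\|\bar{x}\|_{\bold{\Phi}}^{A}=1$ then $\varrho_{\bold{\Phi}}^{A}(\bar{x})\leq 1$ directly from \eqref{eq1}, while for every $\mu>1$ we must have $\varrho_{\bold{\Phi}}^{A}(\mu\bar{x})>1$ (otherwise $\sigma=1/\mu$ would force $\|\bar{x}\|_{\bold{\Phi}}^{A}\leq 1/\mu<1$); letting $\mu\downarrow 1$ and using right-continuity at $1$ gives $\varrho_{\bold{\Phi}}^{A}(\bar{x})\geq 1$, hence $\varrho_{\bold{\Phi}}^{A}(\bar{x})=1$. This is the route I would take.
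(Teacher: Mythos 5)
Your proposal is correct. Note that the paper itself offers no proof of this lemma: it is listed among the "known lemmas" whose proofs are said to run along the same lines as in the cited references (Cui--Hudzik, Kaminska), so there is no argument in the text to compare against line by line. Your one-line reduction --- $\bold{\Phi}\in\delta_2$ gives $l^{A}_{\bold{\Phi}}(X)=h^{A}_{\bold{\Phi}}(X)$ by Theorem~\ref{thm40}, after which the claim is literally Lemma~\ref{2} --- is exactly the chaining the authors evidently intend, given the order in which they state these results. Your supplementary direct argument is also sound and is worth keeping, because it makes explicit the one place where $\delta_2$ is genuinely needed: without it, for $\bar{x}\in l^{A}_{\bold{\Phi}}(X)$ one only knows $\varrho_{\bold{\Phi}}^{A}(\mu\bar{x})<\infty$ for small $\mu$, and the right-continuity of $\mu\mapsto\varrho_{\bold{\Phi}}^{A}(\mu\bar{x})$ at $\mu=1$ (hence the implication $\|\bar{x}\|_{\bold{\Phi}}^{A}=1\Rightarrow\varrho_{\bold{\Phi}}^{A}(\bar{x})\geq1$) can fail; indeed the forward implication $\varrho_{\bold{\Phi}}^{A}(\bar{x})=1\Rightarrow\|\bar{x}\|_{\bold{\Phi}}^{A}=1$ holds with no hypothesis at all, as your convexity estimate $\phi_n(s_n/\sigma)\geq\phi_n(s_n)/\sigma$ shows. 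The only step stated a little glibly is "$\|\bar{x}\|_{\bold{\Phi}}^{A}=1$ implies $\varrho_{\bold{\Phi}}^{A}(\bar{x})\leq1$ directly from \eqref{eq1}"; strictly this uses that $\varrho_{\bold{\Phi}}^{A}(\bar{x}/\sigma)\leq1$ for all $\sigma>1$ together with monotone convergence (or Fatou) as $\sigma\downarrow1$, but that is standard for Luxemburg norms and does not require $\delta_2$.
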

%\begin{proof}Since $\Phi\in\delta_2$, we have $\displaystyle l^{A}_{\bold{\Phi}}(X) = \displaystyle h^{A}_{\bold{\Phi}}(X) $ from Theorem \ref{thm40}. The rest of the proof follows from Lemma \ref{2}.
%\end{proof}

\begin{Lemma}{\label{3}}
Let $\bold{\Phi} \in \delta_2$. Then for any sequence $(\bar{x}^{(l)})$ in
$\displaystyle l^{A}_{\bold{\Phi}}(X) $, $\|\bar{x}^{(l)}\|_{\bold{\Phi}}^A \rightarrow 0$ if and only if
$\varrho_{\bold{\Phi}}^{A}(\bar{x}^{(l)}) \rightarrow 0$.
\end{Lemma}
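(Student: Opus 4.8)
The plan is to prove the two implications separately: the direction $\|\bar x^{(l)}\|_{\bold{\Phi}}^{A}\to0\Rightarrow\varrho_{\bold{\Phi}}^{A}(\bar x^{(l)})\to0$ is soft and uses only the convexity of the modular, whereas the converse is where the $\delta_2$ hypothesis does the real work. Throughout, abbreviate $u_n^{(l)}=\sum_{k=1}^{\infty}\|a_{nk}x_k^{(l)}\|$, so that $\varrho_{\bold{\Phi}}^{A}(\bar x^{(l)})=\sum_{n=1}^{\infty}\phi_n(u_n^{(l)})$ and $\varrho_{\bold{\Phi}}^{A}(\beta\bar x^{(l)})=\sum_{n=1}^{\infty}\phi_n(\beta u_n^{(l)})$ for every $\beta>0$. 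For the forward implication, note that for $l$ large $\|\bar x^{(l)}\|_{\bold{\Phi}}^{A}<1$; by the definition of the infimum in \eqref{eq1} one can, for each such $l$, choose $\sigma<1$ with $\varrho_{\bold{\Phi}}^{A}(\bar x^{(l)}/\sigma)\le1$ and $\sigma$ as close to $\|\bar x^{(l)}\|_{\bold{\Phi}}^{A}$ as desired; convexity of $\varrho_{\bold{\Phi}}^{A}$ together with $\varrho_{\bold{\Phi}}^{A}(\bar 0)=0$ then gives $\varrho_{\bold{\Phi}}^{A}(\bar x^{(l)})\le\sigma\,\varrho_{\bold{\Phi}}^{A}(\bar x^{(l)}/\sigma)\le\sigma$, and letting $\sigma\downarrow\|\bar x^{(l)}\|_{\bold{\Phi}}^{A}$ yields $\varrho_{\bold{\Phi}}^{A}(\bar x^{(l)})\le\|\bar x^{(l)}\|_{\bold{\Phi}}^{A}\to0$.

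For the converse I would argue by contradiction. Suppose $\varrho_{\bold{\Phi}}^{A}(\bar x^{(l)})\to0$ but $\|\bar x^{(l)}\|_{\bold{\Phi}}^{A}\not\to0$; passing to a subsequence, still denoted $(\bar x^{(l)})$, there is $\epsilon\in(0,1)$ with $\|\bar x^{(l)}\|_{\bold{\Phi}}^{A}\ge\epsilon$ for all $l$. Set $\beta:=1/\epsilon>1$. The crux is the claim that $\varrho_{\bold{\Phi}}^{A}(\beta\bar x^{(l)})\to0$. Granting this, $\|\beta\bar x^{(l)}\|_{\bold{\Phi}}^{A}=\beta\|\bar x^{(l)}\|_{\bold{\Phi}}^{A}\ge1$, so putting $\lambda_l:=1/\|\beta\bar x^{(l)}\|_{\bold{\Phi}}^{A}\in(0,1]$ we have $\|\lambda_l\beta\bar x^{(l)}\|_{\bold{\Phi}}^{A}=1$, whence $\varrho_{\bold{\Phi}}^{A}(\lambda_l\beta\bar x^{(l)})=1$ by Lemma \ref{4}; on the other hand convexity gives $\varrho_{\bold{\Phi}}^{A}(\lambda_l\beta\bar x^{(l)})\le\lambda_l\,\varrho_{\bold{\Phi}}^{A}(\beta\bar x^{(l)})\le\varrho_{\bold{\Phi}}^{A}(\beta\bar x^{(l)})\to0$, a contradiction.

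It remains to prove the claim, and here the $\delta_2$ condition enters in its equivalent form recorded in Section 2: for the fixed $\beta>1$ there are $K>0$, $\delta>0$ and a nonnegative $(c_n)\in l_1$ such that $\phi_n(\beta t)\le K\phi_n(t)+c_n$ whenever $\phi_n(t)\le\delta$, for all $n$ and $t\ge0$. Given $\eta>0$, choose $p\in\mathbb{N}$ with $\sum_{n>p}c_n<\eta/3$, and then $L$ so large that $\varrho_{\bold{\Phi}}^{A}(\bar x^{(l)})<\min\{\delta,\eta/(3K)\}$ for all $l\ge L$. For such $l$ each term satisfies $\phi_n(u_n^{(l)})\le\varrho_{\bold{\Phi}}^{A}(\bar x^{(l)})<\delta$, so the pointwise $\delta_2$-estimate applies and the tail is controlled by $\sum_{n>p}\phi_n(\beta u_n^{(l)})\le K\,\varrho_{\bold{\Phi}}^{A}(\bar x^{(l)})+\sum_{n>p}c_n<2\eta/3$. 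For the finitely many head terms $n\le p$, one has $\phi_n(u_n^{(l)})\le\varrho_{\bold{\Phi}}^{A}(\bar x^{(l)})\to0$, and since each $\phi_n$ is nondecreasing, continuous and strictly positive on $(0,\infty)$ this forces $u_n^{(l)}\to0$, hence $\phi_n(\beta u_n^{(l)})\to0$; enlarging $L$ if necessary we get $\sum_{n\le p}\phi_n(\beta u_n^{(l)})<\eta/3$. Adding the two estimates, $\varrho_{\bold{\Phi}}^{A}(\beta\bar x^{(l)})<\eta$ for all $l\ge L$, which proves the claim.

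The step I expect to be the genuine obstacle is precisely this claim. Summing the pointwise $\delta_2$ inequality over all $n$ would only give $\varrho_{\bold{\Phi}}^{A}(\beta\bar x^{(l)})\le K\,\varrho_{\bold{\Phi}}^{A}(\bar x^{(l)})+\sum_{n}c_n$, and the additive constant $\sum_n c_n$ need not be small, so this by itself is useless. The remedy is the head/tail decomposition above: the $l_1$-summability of $(c_n)$ makes the tail $\sum_{n>p}c_n$ negligible, leaving only finitely many terms, which are then killed by continuity of the $\phi_n$ once one observes that termwise convergence $\phi_n(u_n^{(l)})\to0$ upgrades to $u_n^{(l)}\to0$ because each $\phi_n$ vanishes only at the origin.
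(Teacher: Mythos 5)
Your proof is correct. Note that the paper itself does not prove Lemma \ref{3}; it only remarks that the proof ``runs parallel lines'' to the references \cite{CUI2}, \cite{CUI}, \cite{KAM}, and your argument is precisely the standard one from that literature: the soft convexity estimate $\varrho_{\bold{\Phi}}^{A}(\bar x)\leq \|\bar x\|_{\bold{\Phi}}^{A}$ for $\|\bar x\|_{\bold{\Phi}}^{A}<1$ in one direction, and in the other the head/tail splitting that exploits $(c_n)\in l_1$ to neutralize the additive perturbation in the $\delta_2$ inequality --- which, as you correctly identify, is the only nontrivial point. The appeal to Lemma \ref{4} at the end could even be avoided by working with $2\beta$ in place of $\beta$, since $\|2\beta\bar x^{(l)}\|_{\bold{\Phi}}^{A}\geq 2>1$ forces $\varrho_{\bold{\Phi}}^{A}(2\beta\bar x^{(l)})>1$ directly, but your version is equally valid.
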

%\begin{proof}
%See \cite{KAM}.
%\end{proof}

\begin{Lemma}{\label{5}}
Let $\Phi\in \delta_2$ and satisfies the condition
$(*)$ given by \eqref{2.1}. Then for any $\bar{x} \in l_{\bold{\Phi}}^{A}(X)$ and every
$\epsilon\in (0,1)$ there exists $\delta(\epsilon)\in (0, 1)$ such
that $\varrho_{\bold{\Phi}}^{A}(\bar x)\leq 1-\epsilon$ implies $\|\bar x\|_\bold{\Phi}^A \leq
1-\delta$.
\end{Lemma}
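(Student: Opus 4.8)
The plan is to reduce the statement to a single modular inequality: I will show that $\varrho_{\bold{\Phi}}^{A}(\bar x)\le 1-\epsilon$ forces $\varrho_{\bold{\Phi}}^{A}\big((1+\eta)\bar x\big)\le 1$ for some $\eta=\eta(\epsilon)>0$ independent of $\bar x$. Granting this, the definition \eqref{eq1} gives $\|(1+\eta)\bar x\|_{\bold{\Phi}}^{A}\le 1$, so by positive homogeneity of the norm $\|\bar x\|_{\bold{\Phi}}^{A}\le (1+\eta)^{-1}=1-\eta/(1+\eta)$, and one may take $\delta=\eta/(1+\eta)\in(0,1)$. Throughout put $u_n=\sum_{k=1}^{\infty}\|a_{nk}x_k\|$, so that $\sum_{n\ge 1}\phi_n(u_n)=\varrho_{\bold{\Phi}}^{A}(\bar x)\le 1-\epsilon$ and in particular $\phi_n(u_n)\le 1-\epsilon$ for every $n$.

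First I would fix constants depending only on $\epsilon$ and $\bold{\Phi}$. From $\bold{\Phi}\in\delta_2$ there are $K\ge 1$, $\delta_1>0$ and a nonnegative $(c_n)\in l_1$ with $C:=\sum_n c_n<\infty$ such that $\phi_n(2u)\le K\phi_n(u)+c_n$ whenever $\phi_n(u)\le\delta_1$. From the condition $(*)$ in \eqref{2.1}, applied with parameter $\epsilon/2$, there is $\delta_*\in(0,1]$ (replacing it by $\min\{\delta_*,1\}$ if needed) such that $\phi_n(v)<1-\epsilon/2$ implies $\phi_n((1+\delta_*)v)\le 1$ for all $n$ and $v\ge 0$. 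Split $\mathbb{N}=A_1\cup A_2$ with $A_1=\{n:\phi_n(u_n)\le\delta_1\}$ and $A_2=\{n:\phi_n(u_n)>\delta_1\}$; since $\delta_1\,|A_2|\le\sum_{n\in A_2}\phi_n(u_n)\le 1-\epsilon$, the set $A_2$ is finite with $|A_2|\le M:=(1-\epsilon)/\delta_1$.

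Next, for $0<\eta\le\delta_*$ I would bound $\phi_n\big((1+\eta)u_n\big)$ on each part using convexity of $\phi_n$. For $n\in A_1$, writing $(1+\eta)u_n=(1-\eta)u_n+\eta\,(2u_n)$ and invoking the $\delta_2$-inequality gives $\phi_n\big((1+\eta)u_n\big)\le\big(1+(K-1)\eta\big)\phi_n(u_n)+\eta c_n$. For $n\in A_2$, since $\phi_n(u_n)\le 1-\epsilon<1-\epsilon/2$, the condition $(*)$ gives $\phi_n\big((1+\delta_*)u_n\big)\le 1$, and writing $(1+\eta)u_n=(1-\eta/\delta_*)u_n+(\eta/\delta_*)(1+\delta_*)u_n$ yields $\phi_n\big((1+\eta)u_n\big)\le\phi_n(u_n)+\eta/\delta_*$. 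Summing both bounds over all $n$ and using $\sum_n\phi_n(u_n)\le 1-\epsilon$, $\sum_{n\in A_1}c_n\le C$ and $|A_2|\le M$, I obtain
$$\varrho_{\bold{\Phi}}^{A}\big((1+\eta)\bar x\big)=\sum_{n\ge 1}\phi_n\big((1+\eta)u_n\big)\le (1-\epsilon)+\eta\big[(K-1)+C+M/\delta_*\big].$$
Choosing $\eta=\eta(\epsilon)>0$ so small that $\eta\le\delta_*$ and $\eta\big[(K-1)+C+M/\delta_*\big]\le\epsilon$ makes the right-hand side at most $1$, which is exactly what was needed.

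The step requiring care is this last estimate. One must keep $\sum_n\phi_n(u_n)$ as one quantity bounded by $1-\epsilon$ rather than estimating the $A_1$- and $A_2$-sums separately (each could be close to $1-\epsilon$, giving a useless bound near $2(1-\epsilon)$), and one must use the two hypotheses in their respective regimes: $\delta_2$ governs the possibly infinitely many small indices, where only a multiplicative blow-up $1+(K-1)\eta$ is available but is absorbed because $\sum_n\phi_n(u_n)<\infty$, while $(*)$ governs the large indices, where one has only an additive perturbation $\eta/\delta_*$ per index and finiteness of $A_2$ is indispensable. Checking that every constant above — $K,\delta_1,(c_n),C,\delta_*,M$, hence $\eta$ and $\delta$ — depends only on $\epsilon$ and $\bold{\Phi}$ (and $A$), not on $\bar x$, is then routine, and the elementary facts that $\varrho_{\bold{\Phi}}^{A}(\bar y)\le 1$ implies $\|\bar y\|_{\bold{\Phi}}^{A}\le 1$ and that $\|\cdot\|_{\bold{\Phi}}^{A}$ is positively homogeneous are immediate from \eqref{eq1}.
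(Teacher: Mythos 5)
Your proof is correct: the dilation argument (showing $\varrho_{\bold{\Phi}}^{A}((1+\eta)\bar x)\le 1$ by splitting indices into those where the $\delta_2$-inequality applies and the finitely many others handled via condition $(*)$, then invoking homogeneity of the Luxemburg norm) is exactly the standard route, and your care in passing from the non-strict bound $\phi_n(u_n)\le 1-\epsilon$ to the strict hypothesis of $(*)$ via the parameter $\epsilon/2$, and in checking that all constants depend only on $\epsilon$ and $\bold{\Phi}$, closes the usual gaps. The paper itself gives no proof of this lemma, merely deferring to \cite{CUI2}, \cite{CUI}, \cite{KAM}; your argument is essentially the one found there, correctly adapted to the $l_{\bold{\Phi}}^{A}(X)$ setting.
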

%\begin{proof}
% The proof of this lemma is similar as the proof of Lemma $9$ in \cite{KAM}.
%\end{proof}

\begin{Lemma}{\label{10}}
Let $(X, \|.\|)$ be a normed space. If $f:X\rightarrow\mathbb{R}$ is a convex function in the set $K=\{x\in X: \|x\|\leq1\}$ and
$|f(x)|\leq M$ for all $x\in K$ and some $M>0$ then $f$ is almost uniformly continuous in $K$ i.e., for all $d\in(0,1)$
and $\epsilon>0$ there exists $\delta>0$ such that $\|y\|\leq d$ and $\|x-y\|<\delta$ implies $|f(x)-f(y)|<\epsilon$ for
all $x, y\in K$.
\end{Lemma}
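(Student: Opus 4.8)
The plan is to establish the stronger quantitative statement that $f$ is Lipschitz near the origin: namely, that
$|f(x)-f(y)|\leq \frac{2M}{1-d}\,\|x-y\|$
holds for all $x,y\in K$ with $\|y\|\leq d$ and $\|x-y\|\leq 1-d$. Once this is known, the lemma follows at once by choosing $\delta=\min\big\{\,1-d,\ \tfrac{\epsilon(1-d)}{2M}\,\big\}$, since then $\|x-y\|<\delta$ forces $|f(x)-f(y)|<\epsilon$.

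First I would dispose of the trivial case $x=y$ and otherwise set $t:=\|x-y\|\in(0,1-d]$. To bound $f(x)-f(y)$ from above, extend the ray from $y$ through $x$ to the point $z:=y+\frac{1-d}{t}(x-y)$. Since $\|z-y\|=1-d$ and $\|y\|\leq d$, we get $\|z\|\leq 1$, so $z\in K$ and $|f(z)|\leq M$. Now $x$ is the convex combination $x=(1-\lambda)y+\lambda z$ with $\lambda=\frac{t}{1-d}\in(0,1]$, so convexity of $f$ on $K$ gives $f(x)-f(y)\leq \lambda\big(f(z)-f(y)\big)\leq 2M\lambda=\frac{2M}{1-d}\,t$.

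Second, to bound $f(y)-f(x)$ from above I would use the reflected point $w:=y-\frac{1-d}{t}(x-y)$, which again lies in $K$ because $\|w-y\|=1-d$ and $\|y\|\leq d$. A short computation shows $y$ is the convex combination $y=\frac{1}{1+\mu}\,x+\frac{\mu}{1+\mu}\,w$ with $\mu=\frac{t}{1-d}$, so convexity yields $f(y)-f(x)\leq \mu\big(f(w)-f(y)\big)\leq \frac{2M}{1-d}\,t$. Combining the two estimates gives $|f(x)-f(y)|\leq \frac{2M}{1-d}\,\|x-y\|$, which is what was wanted.

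The argument is essentially routine once the auxiliary points $z$ and $w$ are chosen correctly; the only points that need care are verifying that both $z$ and $w$ remain inside $K$ (this is exactly what produces the constant $1-d$ and the side condition $\|x-y\|\leq 1-d$, harmlessly absorbed into the choice of $\delta$) and that the coefficients $\lambda,\mu$ and $\frac{1}{1+\mu},\frac{\mu}{1+\mu}$ are legitimate convex weights in $[0,1]$. I do not expect any genuine obstacle here.
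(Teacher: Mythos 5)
Your proof is correct. The paper does not actually prove Lemma \ref{10}; it is stated as a known result with the proof deferred to the cited references (\cite{CUI2}, \cite{CUI}, \cite{KAM}). Your argument is the standard one for that result --- a bounded convex function on the unit ball is Lipschitz on every smaller ball, with constant $\frac{2M}{1-d}$ obtained by extending the segment from $y$ through $x$ (and its reflection) to distance $1-d$ and using the three-point convexity inequality --- and all the details (membership of $z$ and $w$ in $K$, the convex weights, the final choice of $\delta$) check out.
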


\begin{Lemma}{\label{6}}
Let $\bold{\Phi}\in \delta_2$ and satisfies the
condition $(*)$ given by \eqref{2.1}. Then for each $d\in(0, 1)$ and
$\epsilon>0$ there exists $\delta=\delta(d, \epsilon)>0$ such that
$\varrho_{\bold{\Phi}}^{A}(\bar x)\leq d$, $\varrho_{\bold{\Phi}}^{A}(\bar y)\leq \delta $ imply
\begin{equation}
|\varrho_{\bold{\Phi}}^{A}(\bar x+ \bar y)-\varrho_{\bold{\Phi}}^{A}(\bar x)|<\epsilon \mbox{~for any ~} \bar x, \bar y\in
l_{\bold{\Phi}}^{A}(X).
\end{equation}
\end{Lemma}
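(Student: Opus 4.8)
The plan is to obtain Lemma~\ref{6} as a corollary of the almost uniform continuity statement in Lemma~\ref{10}, applied to the function $f=\varrho_{\bold{\Phi}}^{A}$ on the closed unit ball $K=\{\bar z\in l_{\bold{\Phi}}^{A}(X):\|\bar z\|_{\bold{\Phi}}^{A}\le 1\}$. The preparatory observation is that $f$ meets the hypotheses of Lemma~\ref{10} on $K$: it is convex, being a convex modular, and it is bounded there, since $\|\bar z\|_{\bold{\Phi}}^{A}\le 1$ forces $0\le\varrho_{\bold{\Phi}}^{A}(\bar z)\le 1$. Indeed, by the definition \eqref{eq1} one has $\varrho_{\bold{\Phi}}^{A}(\bar z/\sigma)\le 1$ for every $\sigma>1$, and letting $\sigma\downarrow 1$ and using the continuity and monotonicity of each $\phi_n$ together with the monotone convergence theorem yields $\varrho_{\bold{\Phi}}^{A}(\bar z)\le 1$. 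Hence Lemma~\ref{10} applies to $f$ with $M=1$.

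Now fix $d\in(0,1)$ and $\epsilon>0$. Since $1-d\in(0,1)$, Lemma~\ref{5} (this is where $\bold{\Phi}\in\delta_2$ and the condition $(*)$ are used) produces $d_0:=1-\delta(1-d)\in(0,1)$ such that $\varrho_{\bold{\Phi}}^{A}(\bar x)\le d$ implies $\|\bar x\|_{\bold{\Phi}}^{A}\le d_0$. Feeding the pair $(d_0,\epsilon)$ into Lemma~\ref{10} yields $\delta'>0$ such that $|\varrho_{\bold{\Phi}}^{A}(\bar v)-\varrho_{\bold{\Phi}}^{A}(\bar u)|<\epsilon$ whenever $\bar u,\bar v\in K$ with $\|\bar u\|_{\bold{\Phi}}^{A}\le d_0$ and $\|\bar v-\bar u\|_{\bold{\Phi}}^{A}<\delta'$. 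Set $\eta:=\min\{\delta',\,1-d_0\}>0$. Finally, using $\bold{\Phi}\in\delta_2$ through the nontrivial implication of Lemma~\ref{3} (a sequence whose modular tends to $0$ has norm tending to $0$), a routine contradiction argument gives $\delta=\delta(d,\epsilon)>0$ such that $\varrho_{\bold{\Phi}}^{A}(\bar y)\le\delta$ implies $\|\bar y\|_{\bold{\Phi}}^{A}<\eta$.

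It then remains to combine the estimates. If $\bar x,\bar y\in l_{\bold{\Phi}}^{A}(X)$ satisfy $\varrho_{\bold{\Phi}}^{A}(\bar x)\le d$ and $\varrho_{\bold{\Phi}}^{A}(\bar y)\le\delta$, then $\|\bar x\|_{\bold{\Phi}}^{A}\le d_0$ and $\|\bar y\|_{\bold{\Phi}}^{A}<\eta\le 1-d_0$, so $\|\bar x+\bar y\|_{\bold{\Phi}}^{A}\le 1$ by the triangle inequality; thus $\bar x,\bar x+\bar y\in K$ and $\|(\bar x+\bar y)-\bar x\|_{\bold{\Phi}}^{A}=\|\bar y\|_{\bold{\Phi}}^{A}<\delta'$. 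Applying Lemma~\ref{10} with $\bar u=\bar x$ and $\bar v=\bar x+\bar y$ gives $|\varrho_{\bold{\Phi}}^{A}(\bar x+\bar y)-\varrho_{\bold{\Phi}}^{A}(\bar x)|<\epsilon$, which is the assertion; note that the absolute value is handled at one stroke, so no separate treatment of the two signs is needed.

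I expect the crux of the argument to be not Lemma~\ref{10} itself but the two uniform passages between the modular and the norm: that a modular bound $\varrho_{\bold{\Phi}}^{A}(\bar x)\le d<1$ keeps $\bar x$ in a norm ball of radius $d_0<1$ (Lemma~\ref{5}), and that modular-smallness of $\bar y$ is norm-smallness uniformly in $\bar y$ (a quantitative reading of Lemma~\ref{3}); both rely on $\bold{\Phi}\in\delta_2$, and the second is precisely why Lemma~\ref{3} must be a statement about all sequences. Once these conversions are in place, the only remaining care is to choose $\delta$ small enough that the perturbation $\bar y$ keeps $\bar x+\bar y$ inside the unit ball, so that Lemma~\ref{10} is legitimately applicable.
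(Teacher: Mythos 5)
Your proof is correct and follows essentially the same route as the paper's: convert the modular bounds into norm bounds via Lemma~\ref{5} and (a quantitative reading of) Lemma~\ref{3}, then invoke the almost uniform continuity of the convex, bounded modular on the unit ball from Lemma~\ref{10}. In fact your write-up is more careful than the paper's sketch, since you explicitly verify that $\varrho_{\bold{\Phi}}^{A}\le 1$ on the unit ball and choose $\eta\le 1-d_0$ so that $\bar x+\bar y$ stays in $K$ before applying Lemma~\ref{10}.
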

\begin{proof}
Let $\epsilon>0$ be any number. Assume that for each $d \in (0, 1)$ and $\epsilon>0$ there exists $\delta = \delta(d, \epsilon)>0$
such that $\varrho_{\bold{\Phi}}^{A}(\bar x)\leq d$ and $\varrho_{\bold{\Phi}}^{A}(\bar y)\leq \delta$ for $\bar{ x}, \bar{y} \in l_{\bold{\Phi}}^{A}(X)$.
Since $\Phi\in \delta_2$ and satisfies condition $(*)$, so by Lemma \ref{5}, there exists $d_1\in(0,1)$ such that $\|\bar x\|_{\bold{\Phi}}^{A}\leq d_1$. Also by using Lemma \ref{3}, for $\delta>0$ we have $\delta_1>0$ such that $\|\bar y\|_{\bold{\Phi}}^A \leq \delta_1$ whenever $\varrho_{\bold{\Phi}}^{A}( \bar{y})\leq \delta$ and $\bar y\in l_{\bold{\Phi}}^{A}(X)$. Since $\varrho_{\bold{\Phi}}^{A}$ is a convex and bounded on $l_{\bold{\Phi}}^{A}(X)$, by Lemma \ref{10}, we have $|\varrho_{\bold{\Phi}}^{A}(\bar x+ \bar y)-\varrho_{\bold{\Phi}}^{A}(\bar x)|<\epsilon$.
\end{proof}

\begin{Lemma}{\label{8}}
Let $\bold{\Phi}\in \delta_2$. Then for any $\epsilon>0$ there exists $\delta =\delta(\epsilon)>0$ such that $\varrho_{\bold{\Phi}}^{A}(\bar x)> \delta$ whenever
$\|\bar x\|_{\bold{\Phi}}^{A} \geq \epsilon$.
\end{Lemma}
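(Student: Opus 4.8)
The plan is to prove the contrapositive at the sequence level, using Lemma~\ref{3}. Fix $\epsilon>0$ and put $m(\epsilon)=\inf\{\varrho_{\bold{\Phi}}^{A}(\bar x):\bar x\in l_{\bold{\Phi}}^{A}(X),\ \|\bar x\|_{\bold{\Phi}}^{A}\geq\epsilon\}$, with the convention $m(\epsilon)=+\infty$ when no such $\bar x$ exists (the degenerate case $l_{\bold{\Phi}}^{A}(X)=\{\bar 0\}$, in which the assertion is vacuous). It is enough to show $m(\epsilon)>0$: then any $\delta$ with $0<\delta<m(\epsilon)$ — for instance $\delta=m(\epsilon)/2$ — does the job, since $\|\bar x\|_{\bold{\Phi}}^{A}\geq\epsilon$ forces $\varrho_{\bold{\Phi}}^{A}(\bar x)\geq m(\epsilon)>\delta$.

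To see $m(\epsilon)>0$, I would argue by contradiction. If $m(\epsilon)=0$, then for each $l\in\mathbb{N}$ one can pick $\bar x^{(l)}\in l_{\bold{\Phi}}^{A}(X)$ with $\|\bar x^{(l)}\|_{\bold{\Phi}}^{A}\geq\epsilon$ and $\varrho_{\bold{\Phi}}^{A}(\bar x^{(l)})\leq 1/l$; in particular $\varrho_{\bold{\Phi}}^{A}(\bar x^{(l)})\to 0$. Since $\bold{\Phi}\in\delta_2$, Lemma~\ref{3} gives $\|\bar x^{(l)}\|_{\bold{\Phi}}^{A}\to 0$, contradicting $\|\bar x^{(l)}\|_{\bold{\Phi}}^{A}\geq\epsilon>0$ for all $l$. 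Hence $m(\epsilon)>0$ and the lemma follows.

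I do not expect a serious obstacle here: the required uniform lower bound is precisely the negation of a convergence already established, so the only care needed is bookkeeping — checking that the witnesses $\bar x^{(l)}$ genuinely lie in $l_{\bold{\Phi}}^{A}(X)$ (they do, as $\varrho_{\bold{\Phi}}^{A}(\bar x^{(l)})<\infty$) and noting the trivial case. Should one instead attempt a direct estimate — normalising $\bar y=\bar x/\|\bar x\|_{\bold{\Phi}}^{A}$ so that $\varrho_{\bold{\Phi}}^{A}(\bar y)=1$ by Lemma~\ref{4}, then comparing $\varrho_{\bold{\Phi}}^{A}(\bar x)$ and $\varrho_{\bold{\Phi}}^{A}(\bar y)$ through the $\delta_2$-inequality $\phi_n(\beta u)\leq K\phi_n(u)+c_n$ with $\beta=1/\epsilon$ — the difficulty would be that the additive tail $\sum_n c_n$ need not be smaller than $1$ and so cannot be absorbed without a further limiting step; routing the argument through Lemma~\ref{3} is exactly what supplies that step cleanly. (If one wished, the case $\epsilon\geq 1$ could also be settled at once by convexity of $\varrho_{\bold{\Phi}}^{A}$ together with Lemma~\ref{4}, which already give $\varrho_{\bold{\Phi}}^{A}(\bar x)\geq 1$, but the contradiction argument handles every $\epsilon>0$ uniformly.)
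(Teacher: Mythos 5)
Your argument is correct and is exactly the route the paper intends: the paper's entire proof of this lemma is the single line ``The proof follows from Lemma \ref{3}'', and your contradiction argument (a sequence with $\varrho_{\bold{\Phi}}^{A}(\bar x^{(l)})\to 0$ but $\|\bar x^{(l)}\|_{\bold{\Phi}}^{A}\geq\epsilon$ would violate Lemma \ref{3}) is the standard way to fill in that citation. No gaps.
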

\begin{proof}
The proof follows from Lemma \ref{3}.
%Suppose there exists $\epsilon>0$ such that for any $\delta>0$ we have $\bar x \in l_{\bold{\Phi}}^{A}(X)$
%with $\varrho_{\bold{\Phi}}^{A}(\bar x) \leq \delta$ and $\|\bar x\|_{\bold{\Phi}}^{A} \geq \epsilon$. Take $\delta_n = \frac{1}{n}$. Then
%there exists a sequence $(\bar{x}^{(n)})$ in $l_{\bold{\Phi}}^{A}(X)$ such that $\varrho_{\bold{\Phi}}^{A}(\bar x^{(n)}) \leq \frac{1}{n}$
%and $\|\bar x^{(n)}\|_{\bold{\Phi}}^{A} \geq \epsilon$. Letting $n \rightarrow \infty$, we have $\varrho_{\bold{\Phi}}^{A}(\bar x^{(n)}) \rightarrow 0$.
%So by Lemma $\ref{3}$, we have $\|\bar{x}^{(n)}\|_{\bold{\Phi}}^{A} \rightarrow 0$, a contradiction. This completes the prove.
\end{proof}

\begin{Lemma}{\label{7}}
Let $\bold{\Phi}\in \delta_2$ and satisfies the condition
$(*)$. Then for any $\bar{x}\in
l_{\bold{\Phi}}^{A}(X)$ and any $\epsilon>0$ there exists
$\delta=\delta(\epsilon)>0$ such that $\varrho_{\bold{\Phi}}^{A}(\bar{x})\geq
1+\epsilon$ implies $\|\bar{x}\|_{\bold{\Phi}}^A \geq 1+\delta$.
\end{Lemma}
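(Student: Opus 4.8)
The plan is to argue by contradiction, using the $\delta_2$--equivalences between the modular and the norm (Lemmas \ref{3} and \ref{4}) and the almost--continuity of the modular (Lemma \ref{6}). Suppose the assertion fails. Then there is an $\epsilon_0>0$ and a sequence $(\bar x^{(l)})$ in $l_{\bold{\Phi}}^{A}(X)$ with $\varrho_{\bold{\Phi}}^{A}(\bar x^{(l)})\geq 1+\epsilon_0$ for every $l$, while $\|\bar x^{(l)}\|_{\bold{\Phi}}^{A}<1+\tfrac1l$. Since $\varrho_{\bold{\Phi}}^{A}$ is convex with $\varrho_{\bold{\Phi}}^{A}(\bar 0)=0$, the definition \eqref{eq1} gives $\varrho_{\bold{\Phi}}^{A}(\bar z)\leq\|\bar z\|_{\bold{\Phi}}^{A}$ whenever $\|\bar z\|_{\bold{\Phi}}^{A}\leq 1$; hence $\varrho_{\bold{\Phi}}^{A}(\bar x^{(l)})>1$ forces $\lambda_l:=\|\bar x^{(l)}\|_{\bold{\Phi}}^{A}>1$, so that $1<\lambda_l<1+\tfrac1l$ and $\lambda_l\to 1$.

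Next I would normalise: put $\bar y^{(l)}:=\lambda_l^{-1}\bar x^{(l)}$, so $\|\bar y^{(l)}\|_{\bold{\Phi}}^{A}=1$, and since $\bold{\Phi}\in\delta_2$, Lemma \ref{4} gives $\varrho_{\bold{\Phi}}^{A}(\bar y^{(l)})=1$ for each $l$. Writing $\bar z^{(l)}:=\bar x^{(l)}-\bar y^{(l)}=(\lambda_l-1)\bar y^{(l)}$ we get $\|\bar z^{(l)}\|_{\bold{\Phi}}^{A}=\lambda_l-1\leq\tfrac1l\to 0$, whence Lemma \ref{3} yields $\varrho_{\bold{\Phi}}^{A}(\bar z^{(l)})\to 0$. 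The aim is then to show $\varrho_{\bold{\Phi}}^{A}(\bar x^{(l)})=\varrho_{\bold{\Phi}}^{A}\big(\bar y^{(l)}+\bar z^{(l)}\big)\to 1$, which contradicts $\varrho_{\bold{\Phi}}^{A}(\bar x^{(l)})\geq 1+\epsilon_0$ and closes the argument. Here I would invoke Lemma \ref{6}: with a fixed $d\in(0,1)$ and target accuracy $\epsilon_0/2$ it produces $\delta_0>0$ such that $\varrho_{\bold{\Phi}}^{A}(\bar u)\leq d$ and $\varrho_{\bold{\Phi}}^{A}(\bar v)\leq\delta_0$ imply $|\varrho_{\bold{\Phi}}^{A}(\bar u+\bar v)-\varrho_{\bold{\Phi}}^{A}(\bar u)|<\epsilon_0/2$; as $\varrho_{\bold{\Phi}}^{A}(\bar z^{(l)})\to 0$, eventually $\varrho_{\bold{\Phi}}^{A}(\bar z^{(l)})\leq\delta_0$.

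The step I expect to be the main obstacle is reconciling this with the hypothesis of Lemma \ref{6}, which requires the modular of the ``main'' summand to lie strictly below $1$, whereas $\varrho_{\bold{\Phi}}^{A}(\bar y^{(l)})=1$. Circumventing it is precisely where the condition $(*)$ enters: one needs the uniform fact that, for elements on the unit sphere, a dilation by a factor $\lambda_l\to 1$ enlarges the modular by an amount tending to $0$ uniformly. The $(*)$--condition, via Lemma \ref{5}, together with $\bold{\Phi}\in\delta_2$, are the intended tools for this uniform control: one replaces $\bar y^{(l)}$ by a suitable approximant whose modular is at most a fixed $d<1$, absorbs the resulting (norm--small, hence by Lemma \ref{3} modular--small) correction, and applies Lemma \ref{6} twice to conclude $\varrho_{\bold{\Phi}}^{A}(\bar x^{(l)})\leq 1+\epsilon_0/2$ for all large $l$ --- the required contradiction. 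The remaining pieces, namely the $\delta_2$ equivalences quoted above and the elementary convexity estimates for $\varrho_{\bold{\Phi}}^{A}$, are routine.
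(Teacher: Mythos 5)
The paper gives no proof of Lemma \ref{7} at all --- it is listed among the ``known lemmas'' whose proofs are deferred to \cite{CUI2}, \cite{CUI}, \cite{KAM} --- so there is nothing in-paper to compare against. Your skeleton is the standard argument from those references and is correct up to the last step: the contradiction hypothesis forces $\lambda_l:=\|\bar x^{(l)}\|_{\bold{\Phi}}^{A}\in(1,1+\tfrac1l)$, the normalisation $\bar y^{(l)}=\lambda_l^{-1}\bar x^{(l)}$ has $\varrho_{\bold{\Phi}}^{A}(\bar y^{(l)})=1$ by Lemma \ref{4}, and $\varrho_{\bold{\Phi}}^{A}(\bar z^{(l)})\to 0$ by Lemma \ref{3}. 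All of that is sound.

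The gap is exactly at the step you flag, and the workaround you sketch cannot be made to work. You propose to replace $\bar y^{(l)}$ by an approximant $\bar u$ with $\varrho_{\bold{\Phi}}^{A}(\bar u)\le d<1$ whose correction $\bar v=\bar y^{(l)}-\bar u$ is norm-small, hence modular-small. But Lemma \ref{6} itself rules this out: if $\varrho_{\bold{\Phi}}^{A}(\bar u)\le d$ and $\varrho_{\bold{\Phi}}^{A}(\bar v)\le\delta(d,\epsilon')$ with $\epsilon'<1-d$, then $\varrho_{\bold{\Phi}}^{A}(\bar y^{(l)})=\varrho_{\bold{\Phi}}^{A}(\bar u+\bar v)<d+\epsilon'<1$, contradicting $\varrho_{\bold{\Phi}}^{A}(\bar y^{(l)})=1$; equivalently, Lemma \ref{5} forces any $\bar u$ that is norm-close to the unit sphere to have modular close to $1$, so no approximant with modular bounded away from $1$ exists. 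What the argument actually requires is the continuity lemma with the first summand allowed to have modular equal to $1$: for every $\epsilon>0$ there is $\delta>0$ such that $\varrho_{\bold{\Phi}}^{A}(\bar u)\le 1$ and $\varrho_{\bold{\Phi}}^{A}(\bar v)\le\delta$ imply $|\varrho_{\bold{\Phi}}^{A}(\bar u+\bar v)-\varrho_{\bold{\Phi}}^{A}(\bar u)|<\epsilon$. That is the form proved in the cited references, and it is the form this paper itself silently invokes in the proof of the uniform Opial theorem (see the derivation of \eqref{eqn9}, where Lemma \ref{6} is applied with $\varrho_{\bold{\Phi}}^{A}(\bar u)\le 1$). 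With that version available, your proof closes in one line, $\varrho_{\bold{\Phi}}^{A}(\bar x^{(l)})=\varrho_{\bold{\Phi}}^{A}(\bar y^{(l)}+\bar z^{(l)})<1+\tfrac{\epsilon_0}{2}$ for large $l$; without it, you must either prove the $d=1$ extension (note that $\varrho_{\bold{\Phi}}^{A}(\bar u)\le 1$ still gives $\|\bar u\|_{\bold{\Phi}}^{A}\le 1$, so one remains in the region where the modular is bounded and Lemma \ref{10} can be brought to bear) or cite it explicitly; the detour through a fixed $d<1$ is a dead end.
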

%\begin{proof}
%If possible there exists $\epsilon_0>0$ and a sequence $(\bar{x}^{(l)})$ in $l_{\bold{\Phi}}^{A}(X)$ such that
%$\|\bar{x}^{(l)}\| \rightarrow 1$ and $\varrho_{\bold{\Phi}}^{A}(\bar{x}^{(l)}) \geq 1 + \epsilon_0$.
%Since $\bold{\Phi}\in \delta_2$ and satisfies the condition $(*)$, so for any $\epsilon>0$ there exists $\delta>0$
%such that $$|\varrho_{\bold{\Phi}}^{A}(\bar x+ \bar y)-\varrho_{\bold{\Phi}}^{A}(\bar x)|<\epsilon,$$
%whenever $\varrho_{\bold{\Phi}}^{A}(\bar x) \leq 1$ and $\varrho_{\bold{\Phi}}^{A}(\bar y) \leq \delta$.
%Since $\|\bar{x}^{(l)}\| \rightarrow 1$, so $1 - \frac{1}{\|\bar{x}^{(l)}\|} \rightarrow 0$ as $l \rightarrow \infty$.
%Assume that $\varrho_{\bold{\Phi}}^{A}((1 - \frac{1}{\|\bar{x}^{(l)}\|}) \bar{x}^{(l)}) < \delta$ for all $l \geq l_0$.
%Hence for $l \geq l_0$,
%$$\Big|\varrho_{\bold{\Phi}}^{A}\Big((1 - \frac{1}{\|\bar{x}^{(l)}\|}) \bar{x}^{(l)} + \frac{\bar{x}^{(l)}}{\|\bar{x}^{(l)}\|} \Big ) - \varrho_{\bold{\Phi}}^{A}\Big( \frac{\bar{x}^{(l)}}{\|\bar{x}^{(l)}\|} \Big) \Big |< \epsilon,$$
%Since $\bold{\Phi}\in \delta_2$ and $\|\frac{\bar{x}^{(l)}}{\|\bar{x}^{(l)}\|}\| =1$, so by Lemma $\ref{4}$
%$\varrho_{\bold{\Phi}}^{A}\Big(\frac{\bar{x}^{(l)}}{\|\bar{x}^{(l)}\|} \Big) =1$. Therefore
%$|\varrho_{\bold{\Phi}}^{A}(\bar{x}^{(l)}) -1 | < \epsilon$ which contradicts that $\varrho_{\bold{\Phi}}^{A}(\bar{x}^{(l)}) \geq 1+ \epsilon_0$.
%Hence the proof is complete.
%\end{proof}
Now we will prove the main result in this section.
\begin{Theorem}
Let $A=(a_{nk})$ be a triangle matrix. If a Musielak-Orlicz function $\bold{\Phi} = (\phi_n) \in \delta_2$ and satisfies condition $(*)$ given by $(\ref{2.1})$, then $ \displaystyle l^{A}_{\bold{\Phi}}(X)$ has the uniform Opial property.
\end{Theorem}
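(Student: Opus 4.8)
The plan is to follow the now-standard route for establishing the uniform Opial property in modular sequence spaces (cf. the treatments in \cite{CUI2}, \cite{CUI}, \cite{KAM}), adapted to the matrix setting. Fix $\epsilon>0$ and take a weakly null sequence $(\bar{x}^{(l)})$ in the unit sphere $S(l_{\bold{\Phi}}^{A}(X))$ and an element $\bar{x}\in l_{\bold{\Phi}}^{A}(X)$ with $\|\bar{x}\|_{\bold{\Phi}}^{A}\geq\epsilon$. Since $\bold{\Phi}\in\delta_2$, Lemma \ref{8} supplies a $\delta_0=\delta_0(\epsilon)>0$ with $\varrho_{\bold{\Phi}}^{A}(\bar{x})\geq\delta_0$. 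First I would use the AK property of $h_{\bold{\Phi}}^{A}(X)$ (Theorem \ref{thm3}, noting $l_{\bold{\Phi}}^{A}(X)=h_{\bold{\Phi}}^{A}(X)$ by Theorem \ref{thm40}) to choose $i_0\in\mathbb{N}$ so that the tail $\varrho_{\bold{\Phi}}^{A}(\bar{x}|_{\mathbb{N}-i_0})$ is as small as we wish; concretely, pick $i_0$ with $\varrho_{\bold{\Phi}}^{A}(\bar{x}|_{\mathbb{N}-i_0})<\delta_1$, where $\delta_1$ is the number produced by Lemma \ref{6} applied to $d=1$ and a tolerance to be fixed below. Because $A$ is a triangle, $\bar{x}|_{i_0}$ is supported on finitely many rows (rows $1$ through $i_0$), which is what makes the truncation argument work.

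Next I would exploit weak null convergence on the finite block. Since coordinate functionals are continuous on the $BK$-space (Theorem 3.1), $x_k^{(l)}\to 0$ weakly, hence the finitely many coordinates $x_1^{(l)},\dots,x_{i_0}^{(l)}$ tend to $0$ in $X$ (weak = norm convergence is not needed; only that each fixed coordinate norm can be made small — but weak null in each coordinate does give, via finiteness, that $\sum_{k\le i_0}\|a_{nk}x_k^{(l)}\|$ can be made small for $n\le i_0$ once we are allowed to pass to the relevant conclusion — one argues coordinatewise weak convergence implies norm convergence of each fixed $x_k^{(l)}$ only if $X$ is finite dimensional, so instead one should argue directly that $\varrho_{\bold{\Phi}}^{A}(\bar{x}^{(l)}|_{i_0})\to 0$ using that each row sum involves only finitely many weakly null vectors and that $\phi_n$ is continuous with $\phi_n(0)=0$). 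Thus, for $l$ large, $\varrho_{\bold{\Phi}}^{A}(\bar{x}^{(l)}|_{i_0})<\delta_2$ for any prescribed $\delta_2>0$. Consequently $\varrho_{\bold{\Phi}}^{A}(\bar{x}^{(l)}|_{\mathbb{N}-i_0})$ is close to $\varrho_{\bold{\Phi}}^{A}(\bar{x}^{(l)})=1$ (Lemma \ref{4} gives the value $1$), again via Lemma \ref{6}.

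Now decompose $\bar{x}^{(l)}+\bar{x}$ and estimate its modular from below on the tail. On the block $\mathbb{N}-i_0$ the vector $\bar{x}$ contributes only the small quantity $\varrho_{\bold{\Phi}}^{A}(\bar{x}|_{\mathbb{N}-i_0})<\delta_1$, so by Lemma \ref{6} (with $d=1$) one gets $\varrho_{\bold{\Phi}}^{A}\big((\bar{x}^{(l)}+\bar{x})|_{\mathbb{N}-i_0}\big) \geq \varrho_{\bold{\Phi}}^{A}(\bar{x}^{(l)}|_{\mathbb{N}-i_0}) - \epsilon' \geq 1 - \epsilon''$ for $l$ large, where $\epsilon',\epsilon''$ are controlled. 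But the point is to get a modular \emph{above} $1$: here one uses that on the finite block $i_0$, for large $l$ the term $\bar{x}^{(l)}|_{i_0}$ is negligible while $\bar{x}|_{i_0}$ carries a definite amount of modular mass — namely $\varrho_{\bold{\Phi}}^{A}(\bar{x}|_{i_0}) \geq \delta_0 - \delta_1 > 0$ provided $i_0$ was chosen with $\delta_1<\delta_0/2$, say. Adding a genuinely positive block contribution $\gtrsim \delta_0/2$ to a tail contribution $\gtrsim 1 - \epsilon''$ yields $\varrho_{\bold{\Phi}}^{A}(\bar{x}^{(l)}+\bar{x}) \geq 1 + \eta$ for some $\eta=\eta(\epsilon)>0$ and all large $l$ (using almost-additivity of the modular over the two disjoint blocks, which here is \emph{exact} additivity since the blocks are over disjoint row-index sets). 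Finally, Lemma \ref{7} converts this modular bound into a norm bound: there is $\mu=\mu(\epsilon)>0$ with $\|\bar{x}^{(l)}+\bar{x}\|_{\bold{\Phi}}^{A}\geq 1+\mu$ for all large $l$, hence $\liminf_{l\to\infty}\|\bar{x}^{(l)}+\bar{x}\|_{\bold{\Phi}}^{A}\geq 1+\mu$, which is the uniform Opial property.

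The main obstacle I expect is the bookkeeping in the previous paragraph: extracting a \emph{quantitative lower bound} $1+\eta$ for the total modular, rather than merely $1-o(1)$. The delicate point is that $\bar{x}$'s mass on the finite block need not be separated from zero a priori — one only knows $\varrho_{\bold{\Phi}}^{A}(\bar{x})\geq\delta_0$, split between a finite block and a tail. The resolution is to choose the truncation level $i_0$ so that the tail of $\bar{x}$ carries at most $\delta_0/2$ of the modular, forcing the finite block $\bar{x}|_{i_0}$ to carry at least $\delta_0/2$; then the continuity/almost-additivity estimates (Lemmas \ref{6}, \ref{10}) must be invoked in the correct order, with all the $\delta$'s chosen in the dependency chain $\epsilon \rightsquigarrow \delta_0 \rightsquigarrow i_0,\delta_1 \rightsquigarrow \delta_2 \rightsquigarrow \eta \rightsquigarrow \mu$. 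The triangularity of $A$ is used exactly once but crucially: it guarantees that truncating $\bar{x}$ to its first $i_0$ coordinates truncates $\varrho_{\bold{\Phi}}^{A}$ to its first $i_0$ rows, so the two blocks are modularly independent and the additivity is exact.
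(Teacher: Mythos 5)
Your proposal follows essentially the same route as the paper's proof: Lemma \ref{8} to get a modular lower bound $\varrho_{\bold{\Phi}}^{A}(\bar{x})\geq\delta$, a split into a finite row-block (where $\bar{x}$ carries mass $\geq 5\delta/6$ and $\bar{x}^{(l)}$ is asymptotically negligible) and a tail (where $\bar{x}^{(l)}$ carries mass $\approx 1$ by Lemma \ref{4} and $\bar{x}$ contributes little), almost-additivity via Lemma \ref{6}, exact additivity over disjoint row blocks thanks to triangularity, and finally Lemma \ref{7} to pass from $\varrho_{\bold{\Phi}}^{A}(\bar{x}^{(l)}+\bar{x})\geq 1+\delta/2$ to the norm bound $1+\mu$. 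The coordinatewise-convergence subtlety you raise (weak null in each coordinate versus norm null in $X$) is present in the paper's argument as well, which simply asserts $\bar{x}^{(l)}_k\rightarrow 0$, so your treatment is no weaker than the original.
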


\begin{proof}
Let $ S(\displaystyle l^{A}_{\bold{\Phi}}(X))$ be the unit sphere of $\displaystyle l^{A}_{\bold{\Phi}}(X)$ and  $(\bar{x}^{(l)}) \subset S(\displaystyle l^{A}_{\bold{\Phi}}(X))$ be any weakly null sequence.
We will show that for any $\epsilon>0$ there exists $\mu>0$ such that
\begin{center}
$\displaystyle\liminf_{l\rightarrow\infty}\|\bar{x}^{(l)} + \bar{x} \|_{\bold{\Phi}}^A\geq
1+\mu$,
\end{center}
where $\bar{x} \in \displaystyle l^{A}_{\bold{\Phi}}(X)$ satisfying $\|\bar{x} \|_{\bold{\Phi}}^A \geq \epsilon$.
Since $\Phi \in \delta_2$ and $\|\bar{x} \|_{\bold{\Phi}}^A \geq \epsilon$, so by using Lemma \ref{8}, for each $\epsilon>0$ there exists $\delta\in (0, 1)$ such that
%such that for each $x\in \displaystyle l^{A}_{\bold{\Phi}}(X)$,
$\varrho_{\bold{\Phi}}^{A}(\bar x)\geq \delta$, and also by Lemma \ref{6} there
exists $\delta_1\in (0, \delta)$ such that $\varrho_{\bold{\Phi}}^{A}(\bar u)\leq 1$,
$\varrho_{\bold{\Phi}}^{A}(\bar v)\leq \delta_1$ implies
\begin{equation}{\label{eqn9}}
|\varrho_{\bold{\Phi}}^{A}(\bar u+ \bar v)-\varrho_{\bold{\Phi}}^{A}(\bar u)|<\frac{\delta}{6} \mbox{~for any ~}
\bar{u}, \bar{v} \in \displaystyle l^{A}_{\bold{\Phi}}(X).
\end{equation}
Since $\varrho_{\bold{\Phi}}^{A}(\bar x)<\infty$, so there exists $n_0\in
\mathbb{N}$ such that
\begin{equation}{\label{eqn10}}
\displaystyle\sum_{n=n_0 +
1}^{\infty}\phi_n \bigg(\sum\limits_{k=1}^{n}\|a_{nk}x_{k} \|\bigg)\leq\frac{\delta_1}{6}.
\end{equation}
From equation $(\ref{eqn10})$, it follows that
\begin{align*}
\delta \leq\displaystyle\sum_{n=1}^{n_0}\phi_n \bigg(\sum\limits_{k=1}^{n}\|a_{nk}x_{k} \|\bigg) + \displaystyle\sum_{n=n_0+1}^{\infty}\phi_n \bigg(\sum\limits_{k=1}^{n}\|a_{nk}x_{k} \|\bigg)
 \leq \displaystyle\sum_{n=1}^{n_0}\phi_n \bigg(\sum\limits_{k=1}^{n}\|a_{nk}x_{k} \|\bigg) + \frac{\delta_1}{6},
 \end{align*}
which implies $\displaystyle\sum_{n=1}^{n_0}\phi_n \bigg(\sum\limits_{k=1}^{n}\|a_{nk}x_{k} \|\bigg) \geq\delta-\frac{\delta_1}{6}>
\delta-\frac{\delta}{6} =\frac{5\delta}{6}.$
Since $\bar{x}^{(l)}\rightarrow 0$ weakly, so $\bar{x}^{(l)}_k \rightarrow 0$ for each
$k$. Hence there exists $l_0$ such that for all $l\geq l_0$ the last
inequality yields
\begin{equation}{\label{eqn11}}
\displaystyle\sum_{n=
1}^{n_0}\phi_n \bigg(\sum\limits_{k=1}^{n}\|a_{nk}({x}^{(l)}_k + x_{k})\|\bigg) \geq\frac{5\delta}{6}.
\end{equation}
Again $\bar{x}^{(l)} \rightarrow 0$ weakly, so we can choose $n_0 \in \mathbb{N}$ such that
$\varrho_{\bold{\Phi}}^{A}(\bar{x}^{(l)}|_{n_0})\rightarrow 0$ as $l\rightarrow\infty$. Therefore there exists $l_1 > l_0$ such that
$\varrho_{\bold{\Phi}}^{A}(\bar{x}^{(l)}|_{n_0})\leq\delta_1$ for all $l\geq l_1$. Since
$(\bar{x}^{(l)})\subset S(\displaystyle l^{A}_{\bold{\Phi}}(X))$, so by Lemma
\ref{4}, we have $\varrho_{\bold{\Phi}}^{A}(\bar{x}^{(l)}) =1$, and hence for $n_0 \in \mathbb{N}$, we have $\varrho_{\bold{\Phi}}^{A}(\bar{x}^{(l)}|_{\mathbb{N}-n_0})\leq 1$.
Let us set $\bar u=\bar{x}^{(l)}|_{\mathbb{N}-n_0}$ and $\bar v=\bar{x}^{(l)}|_{n_0}$. Then $\bar u,
\bar v\in \displaystyle l^{A}_{\bold{\Phi}}(X)$ and $\varrho_{\bold{\Phi}}^{A} (\bar u)\leq1$,  $\varrho_{\bold{\Phi}}^{A}(\bar v) \leq
\delta_1$. Using $(\ref{eqn9})$, we have
\begin{center}
$\big|\varrho_{\bold{\Phi}}^{A}(\bar{x}^{(l)}|_{\mathbb{N}-n_0}+
\bar{x}^{(l)}|_{n_0}\big)-\varrho_{\bold{\Phi}}^{A}\big(
\bar{x}^{(l)}|_{\mathbb{N}-n_0}\big)\big|<\frac{\delta}{6}$,
\end{center}
for all $l\geq l_1$. Thus $\varrho_{\bold{\Phi}}^{A}(\bar{x}^{(l)})-\frac{\delta}{6}<\varrho_{\bold{\Phi}}^{A}\big( \bar{x}^{(l)}|_{\mathbb{N}-n_0}\big)$ for all $l\geq l_1$, i.e., \\ $\displaystyle\sum_{n=n_0+1}^{\infty}\phi_n \Big(\sum\limits_{k=1}^{n}\|a_{nk}{x}^{(l)}_k\|\Big)$ $> 1-\frac{\delta}{6}$ for all
$l\geq l_1$. Again, since
$\varrho_{\bold{\Phi}}^{A}\big(\bar{x}^{(l)}|_{\mathbb{N}-n_0}\big)\leq 1$ and
$\varrho_{\bold{\Phi}}^{A}\big( \bar{x} |_{\mathbb{N}-n_0}\big)\leq
\frac{\delta_1}{6}<\delta_1$, so from the equations $(\ref{eqn9})$, we have
$\Big| \varrho_{\bold{\Phi}}^{A}\big(\bar{x}^{(l)} + \bar{x}|_{\mathbb{N}-n_0}\big) - \varrho_{\bold{\Phi}}^{A}\big(\bar{x}^{(l)}|_{\mathbb{N}-n_0}\big) \Big|
< \frac{\delta}{6}$.

%and
%$(\ref{eqn11})$, we obtain for
We have for $l \geq l_1$
\begin{align*}
\varrho_{\bold{\Phi}}^{A}(\bar{x}^{(l)}+ \bar{x}) &= \displaystyle\sum_{n=1}^{n_0}\phi_n \bigg(\sum\limits_{k=1}^{n}\|a_{nk}({x}^{(l)}_k + x_{k} )\|\bigg) + \displaystyle\sum_{n=n_0+1}^{\infty}\phi_n \bigg(\sum\limits_{k=1}^{n}\|a_{nk}( {x}^{(l)}_k + x_{k} )\|\bigg) \\
%& \geq\displaystyle\sum_{n=1}^{n_0}\varphi_n\Big(\frac{1}{\lambda_n}\displaystyle\sum_{j\in I_n}|x_l(j)+x(j)|\Big) + \displaystyle\sum_{n=n_0+1}^{\infty}\varphi_n\Big(\frac{1}{\lambda_n}\displaystyle\sum_{j=n_0+1}^{n}|x_l(j)+x(j)|\Big)\\
 &>\displaystyle\sum_{n=1}^{n_0}\phi_n \bigg(\sum\limits_{k=1}^{n}\|a_{nk}({x}^{(l)}_k + x_{k} ) \|\bigg) + \displaystyle\sum_{n=n_0+1}^{\infty}\phi_n \bigg(\sum\limits_{k=1}^{n}\|a_{nk}{x}^{(l)}_k \|\bigg) -\frac{\delta}{6}\\
 & > \frac{5\delta}{6}+ \Big(1-\frac{\delta}{6}\Big)-\frac{\delta}{6}=1+ \frac{\delta}{2}.
 \end{align*}
Since $\bold{\Phi}\in\delta_2$ and satisfies the condition $(*)$, so by Lemma \ref{7} there exists $\mu>0$ depending only on
$\delta$ such that $\|\bar{x}^{(l)} + \bar{x}\|_{\bold{\Phi}}^A > 1+ \mu$ for $l \geq l_1$. Hence
$\displaystyle\liminf_{l\rightarrow\infty}\|\bar{x}^{(l)} +\bar{x}\|_{\bold{\Phi}}^A\geq
1+\mu$. This completes the proof.
\end{proof}

\begin{note}
 Let $A$ be an identity matrix, $X = \mathbb{R}$ and $\phi_n(u) = u^{p_n}$ for all $u \geq 0$, $1 \leq p_n< \infty$ such that $\displaystyle \sup_{n \geq 1}p_n < \infty$. Then $\bold{\Phi} = (\phi_n) \in \delta_2$ and satisfies the condition $(*)$ given by \eqref{2.1}. The space $ \displaystyle l_{\bold{\Phi}}$ has the uniform Opial property studied by Cui and Hudzik \cite{CUI2}. It would be interesting to find the necessary condition for the space $ \displaystyle l^{A}_{\bold{\Phi}}(X)$ to have the
 uniform Opial property.
\end{note}

%%%%%%%%%%%%%%%%%%%%%%%%%%%%%%%%%%%%%%%%%%%%%%%%%%%%%%%%%%%%%%%%%%%%%%%%%%%%%%%%%%%%%%%%%%%%%
%%%%%%%%%%%%%%%%%%%%%%%%%%%%%% BANACH LATICE %%%%%%%%%%%%%%%%%%%%%%%%%%%%%%%%%%%%%%%%%%%%%%%%

\begin{Theorem}{\label{thm2}}
Let $X$ be a $\sigma$-DC Banach lattice. Then the space $ \displaystyle l^{A}_{\bold{\Phi}}(X)$ is a $\sigma$-DC Banach lattice.
\end{Theorem}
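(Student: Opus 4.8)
The plan is to first fix the natural order on $\displaystyle l^{A}_{\bold{\Phi}}(X)$, the coordinatewise order inherited from $l_{\infty}(X)$: $\bar{x}\leq\bar{y}$ iff $x_k\leq y_k$ in $X$ for every $k$. The order axioms (translation invariance, positive homogeneity) are immediate. To see that $\displaystyle l^{A}_{\bold{\Phi}}(X)$ is a vector lattice I would check that finite suprema and infima, computed coordinatewise, stay in the space: for $\bar{x},\bar{y}\in \displaystyle l^{A}_{\bold{\Phi}}(X)$ one has $|\sup\{x_k,y_k\}|\leq |x_k|+|y_k|$ in $X$, hence $\|\sup\{x_k,y_k\}\|\leq \||x_k|+|y_k|\|$ by monotonicity of the norm on the Banach lattice $X$; since $|\bar{x}|+|\bar{y}|=(|x_k|+|y_k|)_k$ lies in $\displaystyle l^{A}_{\bold{\Phi}}(X)$ (as $\||x_k|\|=\|x_k\|$ and the space is linear), monotonicity of each $\phi_n$ and of the weighted sums $\sum_k|a_{nk}|\,\|\cdot\|$ gives $\sup\{\bar{x},\bar{y}\}\in \displaystyle l^{A}_{\bold{\Phi}}(X)$; similarly for $\inf\{\bar{x},\bar{y}\}$ and for $|\bar{x}|$.

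Next I would verify that $\|\cdot\|_{\bold{\Phi}}^{A}$ is a lattice norm. Assume $|\bar{x}|\leq|\bar{y}|$, i.e. $|x_k|\leq|y_k|$ in $X$ for all $k$; then $\|x_k\|\leq\|y_k\|$, so $\sum_k|a_{nk}|\,\|x_k\|\leq\sum_k|a_{nk}|\,\|y_k\|$ for each $n$, and since each $\phi_n$ is nondecreasing,
\[
\sum_{n=1}^{\infty}\phi_n\!\left(\frac{\sum_k\|a_{nk}x_k\|}{\sigma}\right)\leq\sum_{n=1}^{\infty}\phi_n\!\left(\frac{\sum_k\|a_{nk}y_k\|}{\sigma}\right)\qquad\text{for every }\sigma>0.
\]
Thus every $\sigma$ admissible for $\bar{y}$ in $(\ref{eq1})$ is admissible for $\bar{x}$, whence $\|\bar{x}\|_{\bold{\Phi}}^{A}\leq\|\bar{y}\|_{\bold{\Phi}}^{A}$. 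Together with the completeness of $\displaystyle l^{A}_{\bold{\Phi}}(X)$ already proved, this makes $\displaystyle l^{A}_{\bold{\Phi}}(X)$ a Banach lattice.

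For $\sigma$-Dedekind completeness, I would take a nonnegative, order-bounded sequence $(\bar{x}^{(m)})_m$ in $\displaystyle l^{A}_{\bold{\Phi}}(X)$, say $\bar{0}\leq\bar{x}^{(m)}\leq\bar{z}=(z_k)\in \displaystyle l^{A}_{\bold{\Phi}}(X)$ for all $m$. For each fixed $k$ the sequence $(x_k^{(m)})_m$ is nonnegative and order-bounded by $z_k$ in $X$, so by $\sigma$-Dedekind completeness of $X$ the supremum $x_k:=\sup_m x_k^{(m)}$ exists in $X$ with $0\leq x_k\leq z_k$. Set $\bar{x}=(x_k)$. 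From $0\leq x_k\leq z_k$ one gets $\|x_k\|\leq\|z_k\|$, so $\bar{x}\in l_{\infty}(X)$ and, arguing as above, $\varrho_{\bold{\Phi}}^{A}(\bar{x}/\sigma)\leq\varrho_{\bold{\Phi}}^{A}(\bar{z}/\sigma)<\infty$ for a $\sigma>0$ witnessing $\bar{z}\in \displaystyle l^{A}_{\bold{\Phi}}(X)$, hence $\bar{x}\in \displaystyle l^{A}_{\bold{\Phi}}(X)$. Plainly $\bar{x}^{(m)}\leq\bar{x}$ for all $m$; and if $\bar{y}=(y_k)\in \displaystyle l^{A}_{\bold{\Phi}}(X)$ satisfies $\bar{x}^{(m)}\leq\bar{y}$ for all $m$, then $x_k^{(m)}\leq y_k$ for all $m$, so $x_k=\sup_m x_k^{(m)}\leq y_k$ in $X$ and $\bar{x}\leq\bar{y}$. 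Therefore $\bar{x}=\sup_m\bar{x}^{(m)}$ in $\displaystyle l^{A}_{\bold{\Phi}}(X)$, which gives the claim. I do not anticipate a serious obstacle; the only point needing care is that the lattice operations and the candidate supremum do not leave $\displaystyle l^{A}_{\bold{\Phi}}(X)$ — controlled by the order bound together with monotonicity of $\|\cdot\|$ on $X$, of the $|a_{nk}|$-weighted sums, and of the $\phi_n$'s — and that the coordinatewise supremum in $X$ is genuinely the supremum in the smaller space.
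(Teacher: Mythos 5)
Your proposal is correct and follows essentially the same route as the paper: deduce the lattice-norm property from $|x_k|\leq|y_k|\Rightarrow\|x_k\|\leq\|y_k\|$ together with monotonicity of the weighted sums and of each $\phi_n$, then build the supremum of an order-bounded nonnegative sequence coordinatewise using $\sigma$-Dedekind completeness of $X$ and show it stays in $l^{A}_{\bold{\Phi}}(X)$ by domination by the order bound. You are in fact a bit more careful than the paper in explicitly checking closure under the lattice operations and verifying that the coordinatewise supremum is genuinely the least upper bound in the subspace.
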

\begin{proof}
We have proved that $ \displaystyle l^{A}_{\bold{\Phi}}(X)$ is a Banach space with the norm $\|.\|_{\bold{\Phi}}^{A}.$
To prove $ \displaystyle l^{A}_{\bold{\Phi}}(X)$ is a Banach lattice, let $\bar{x}=(x_k), \bar{y}=(y_k) \in \displaystyle l^{A}_{\bold{\Phi}}(X)$
such that $|x_k| \leq |y_k |$ for all $k$. Since by hypothesis $X$ is a Banach lattice and $x_k, y_k \in X$ for all $k$. Therefore
$|x_k| \leq |y_k|$ $\Rightarrow$ $\|x_k \| \leq \| y_k \|$ for all $k$.
%Then for all $n, k \in \mathbb{N}$,
%\begin{align*}
%|a_{nk}|\|x_k \| & \leq |a_{nk}|\|y_k \| \\
%\sum_{k=1}^{\infty}|a_{nk}|\|x_k \| & \leq \sum_{k=1}^{\infty}|a_{nk}|\|y_k \|.
%\end{align*}
Let $\sigma > 0$ be any number. Now using the nondecreasing property of each $\phi_n$, we get
\begin{align*}
\sum_{n=1}^{\infty}\phi_n \bigg(\frac{\sum_{k=1}^{\infty}|a_{nk}|\|x_k \|}{\sigma} \bigg)  \leq \sum_{n=1}^{\infty}\phi_n \bigg(\frac{\sum_{k=1}^{\infty}|a_{nk}|\|y_k \|}{\sigma} \bigg)
\quad \Rightarrow \|\bar{x}\|_{\bold{\Phi}}^{A} & \leq \|\bar{y}\|_{\bold{\Phi}}^{A}.
\end{align*}
Hence $ \displaystyle l^{A}_{\bold{\Phi}}(X)$ is a Banach lattice.\\
Now we shall show that the space $ \displaystyle l^{A}_{\bold{\Phi}}(X)$ is $\sigma$-DC. Let $(\bar{x}^{(n)})$ be a non-negative
order bounded sequence and bounded above by $\bar{y} \in \displaystyle l^{A}_{\bold{\Phi}}(X)$, i.e., for each $k \in \mathbb{N}$,
$ x_{k}^{(n)} \leq y_k \quad \mbox{for all}~ n \geq 1.$

Since $X$ is a $\sigma$-DC, there exists $(x_k) \subset X$ such that $\displaystyle\sup_{n}x_k^{(n)} = x_k$ for all $k$. Therefore $x_k \leq y_k$ for all $k$.
Again $\bar{y} \in l_{\bold{\Phi}}^{A}(X)$, so
$\displaystyle\sum_{n=1}^{\infty}\phi_n \bigg(\frac{\displaystyle\sum_{k=1}^{\infty}|a_{nk}|\|x_k \|}{\sigma} \bigg) < \infty$ for some $\sigma > 0$, and hence
$ \bar x \in \displaystyle l^{A}_{\bold{\Phi}}(X)$. This completes the proof.
\end{proof}

\begin{Corollary}
If $X$ is a Banach lattice, then $\displaystyle h^{A}_{\bold{\Phi}}(X)$ is a Banach lattice.
\end{Corollary}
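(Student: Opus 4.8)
The plan is to realize $h^{A}_{\bold{\Phi}}(X)$ as a sublattice of $l^{A}_{\bold{\Phi}}(X)$ equipped with the coordinatewise order coming from $X$, and then simply to borrow the lattice‑norm inequality and the completeness that are already available. Concretely, I would order $h^{A}_{\bold{\Phi}}(X)$ by declaring $\bar x\leq\bar y$ when $x_k\leq y_k$ in $X$ for every $k$, so that its positive cone is $\{\bar x\in h^{A}_{\bold{\Phi}}(X):x_k\geq 0~\text{for all}~k\}$ and $|\bar x|=(|x_k|)_k$; compatibility of this order with the vector‑space operations is immediate from the corresponding property of $X$.

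The substantive step is to check that $h^{A}_{\bold{\Phi}}(X)$ is closed under the coordinatewise lattice operations. Let $\bar x=(x_k),\bar y=(y_k)\in h^{A}_{\bold{\Phi}}(X)$ and put $\bar z=(z_k)$ with $z_k=\sup\{x_k,y_k\}$ in $X$. From the Banach‑lattice identity $x\vee y=\tfrac12\big(x+y+|x-y|\big)$ one gets $|z_k|\leq|x_k|+|y_k|$, hence $\|z_k\|\leq\|x_k\|+\|y_k\|$; in particular $\bar z\in l_{\infty}(X)$ and $\sum_{k}\|a_{nk}z_k\|\leq\sum_{k}\|a_{nk}x_k\|+\sum_{k}\|a_{nk}y_k\|$ for each $n$. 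Fixing an arbitrary $\sigma>0$ and using convexity and monotonicity of each $\phi_n$,
\begin{align*}
\sum_{n=1}^{\infty}\phi_n\!\left(\frac{\sum_{k}\|a_{nk}z_k\|}{\sigma}\right)
&\leq\frac12\sum_{n=1}^{\infty}\phi_n\!\left(\frac{\sum_{k}\|a_{nk}x_k\|}{\sigma/2}\right)\\
&\quad+\frac12\sum_{n=1}^{\infty}\phi_n\!\left(\frac{\sum_{k}\|a_{nk}y_k\|}{\sigma/2}\right)<\infty,
\end{align*}
and the finiteness holds for \emph{every} $\sigma>0$ precisely because $\bar x,\bar y\in h^{A}_{\bold{\Phi}}(X)$ --- this is exactly where the ``for all $\sigma$'' clause in the definition of $h^{A}_{\bold{\Phi}}(X)$ enters. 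Thus $\bar z\in h^{A}_{\bold{\Phi}}(X)$, and likewise $\inf\{\bar x,\bar y\}=-\sup\{-\bar x,-\bar y\}\in h^{A}_{\bold{\Phi}}(X)$ and $|\bar x|=\sup\{\bar x,-\bar x\}\in h^{A}_{\bold{\Phi}}(X)$; moreover $\bar z$ is clearly the least upper bound within $h^{A}_{\bold{\Phi}}(X)$, so $h^{A}_{\bold{\Phi}}(X)$ is a vector lattice whose suprema and infima are computed coordinatewise (and coincide with those in $l^{A}_{\bold{\Phi}}(X)$).

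It then remains to observe that $\|\cdot\|_{\bold{\Phi}}^{A}$ restricted to $h^{A}_{\bold{\Phi}}(X)$ is a lattice norm and that the space is complete. For the first, if $|\bar x|\leq|\bar y|$ then $\|x_k\|\leq\|y_k\|$ for all $k$ since $X$ is a Banach lattice, and the monotonicity estimate already carried out in the proof of Theorem~\ref{thm2} yields $\|\bar x\|_{\bold{\Phi}}^{A}\leq\|\bar y\|_{\bold{\Phi}}^{A}$; for the second, Theorem~\ref{thm3} states that $h^{A}_{\bold{\Phi}}(X)$ is a BK‑space, hence complete under $\|\cdot\|_{\bold{\Phi}}^{A}$. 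Together these give that $h^{A}_{\bold{\Phi}}(X)$ is a Banach lattice. The only step that is not a mere matter of restriction --- and what I would flag as the (mild) main obstacle --- is the closure under $\vee$ and $\wedge$: one has to push the pointwise bound $\|z_k\|\leq\|x_k\|+\|y_k\|$ through the nonlinear modular $\varrho_{\bold{\Phi}}^{A}$, which works thanks to the convexity of the $\phi_n$ together with the all‑$\sigma$ requirement in the definition of $h^{A}_{\bold{\Phi}}(X)$; every other ingredient is inherited from $l^{A}_{\bold{\Phi}}(X)$ and from Theorems~\ref{thm2} and \ref{thm3}.
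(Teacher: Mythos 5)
Your proof is correct, and it follows the route the paper evidently intends (the paper gives no proof of this corollary): inherit the coordinatewise order and the lattice-norm inequality from the proof of Theorem~\ref{thm2}, and get completeness from the closedness of $h^{A}_{\bold{\Phi}}(X)$ established in Theorem~\ref{thm3}. You in fact supply the one verification the paper silently omits --- that $h^{A}_{\bold{\Phi}}(X)$ is closed under the lattice operations (a closed subspace of a Banach lattice need not be a sublattice) --- and your convexity argument for this, via $\|x_k\vee y_k\|\leq\|x_k\|+\|y_k\|$ and the ``for all $\sigma$'' clause, is sound; one could shorten it slightly by observing that $h^{A}_{\bold{\Phi}}(X)$ is solid in $l^{A}_{\bold{\Phi}}(X)$, so closure under $\vee$, $\wedge$, $|\cdot|$ follows from $|\bar x\vee\bar y|\leq|\bar x|+|\bar y|$.
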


\begin{Theorem}
Let a Banach lattice $X$ be an AL-space. If $\bold{\Phi} =(\phi_n) \in \delta_2$ and satisfies the condition $(*)$ given by $\eqref{2.1}$, then the Banach lattice $\displaystyle l^{A}_{\bold{\Phi}}(X)$ is uniformly monotone.
\end{Theorem}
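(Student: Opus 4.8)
The plan is to verify the definition of uniform monotonicity head-on. Fix $\epsilon\in(0,1)$ and take arbitrary $\bar{x}=(x_k)$, $\bar{y}=(y_k)$ in the positive cone of $l^{A}_{\bold{\Phi}}(X)$ with $\|\bar{x}\|_{\bold{\Phi}}^{A}=1$ and $\|\bar{y}\|_{\bold{\Phi}}^{A}\geq\epsilon$; the goal is to produce $\delta=\delta(\epsilon)>0$, not depending on $\bar{x},\bar{y}$, such that $\|\bar{x}+\bar{y}\|_{\bold{\Phi}}^{A}\geq 1+\delta$. Since $\bar{x},\bar{y}\geq\bar{0}$ means (the order on $l^{A}_{\bold{\Phi}}(X)$ being coordinatewise, as in Theorem \ref{thm2}) that $x_k,y_k\in X_{+}$ for every $k$, the crucial structural input is the $AL$-identity $\|x_k+y_k\|=\|x_k\|+\|y_k\|$, which I want to feed through the modular $\varrho_{\bold{\Phi}}^{A}$.

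First I would record two consequences of $\bold{\Phi}\in\delta_2$: by Lemma \ref{4}, $\|\bar{x}\|_{\bold{\Phi}}^{A}=1$ gives $\varrho_{\bold{\Phi}}^{A}(\bar{x})=1$; and by Lemma \ref{8} there is $\eta=\eta(\epsilon)>0$, depending only on $\epsilon$, with $\varrho_{\bold{\Phi}}^{A}(\bar{y})\geq\eta$ whenever $\|\bar{y}\|_{\bold{\Phi}}^{A}\geq\epsilon$.

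The heart of the argument is then a direct estimate of $\varrho_{\bold{\Phi}}^{A}(\bar{x}+\bar{y})$. Put $u_n=\sum_{k}|a_{nk}|\,\|x_k\|$ and $v_n=\sum_{k}|a_{nk}|\,\|y_k\|$. For each $n$ and $k$, since $a_{nk}\in\mathbb{R}$ and $x_k,y_k\in X_{+}$, the $AL$-property yields $\|a_{nk}(x_k+y_k)\|=|a_{nk}|\,\|x_k+y_k\|=|a_{nk}|\,\|x_k\|+|a_{nk}|\,\|y_k\|$, hence $\sum_{k}\|a_{nk}(x_k+y_k)\|=u_n+v_n$. Because each $\phi_n$ is convex with $\phi_n(0)=0$, it is superadditive on $[0,\infty)$ (writing $u_n$ and $v_n$ as convex combinations of $u_n+v_n$ and $0$ gives $\phi_n(u_n)+\phi_n(v_n)\leq\phi_n(u_n+v_n)$). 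Summing over $n$,
$$\varrho_{\bold{\Phi}}^{A}(\bar{x}+\bar{y})=\sum_{n=1}^{\infty}\phi_n(u_n+v_n)\geq\sum_{n=1}^{\infty}\phi_n(u_n)+\sum_{n=1}^{\infty}\phi_n(v_n)=\varrho_{\bold{\Phi}}^{A}(\bar{x})+\varrho_{\bold{\Phi}}^{A}(\bar{y})\geq 1+\eta.$$
Finally, since $\bold{\Phi}\in\delta_2$ and satisfies condition $(*)$, Lemma \ref{7} applied to $\bar{x}+\bar{y}$ with parameter $\eta$ gives $\delta=\delta(\eta)=\delta(\epsilon)>0$ with $\|\bar{x}+\bar{y}\|_{\bold{\Phi}}^{A}\geq 1+\delta$; because $\eta$, and hence $\delta$, depends only on $\epsilon$, this $\delta$ is uniform, which is exactly what the definition of uniform monotonicity requires.

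I expect the only genuine subtlety to be the very first equality in the displayed chain, namely that the $AL$-identity really does apply coordinate by coordinate — this rests on the coordinatewise order and on $x_k,y_k$ being positive elements of $X$ — together with the superadditivity of the $\phi_n$; once $\varrho_{\bold{\Phi}}^{A}(\bar{x}+\bar{y})\geq 1+\eta$ is in hand, the conclusion is a mechanical application of the $\delta_2$-and-$(*)$ lemmas already established. It is also worth remarking that $AL$-ness is used only to turn $\|x_k+y_k\|$ into $\|x_k\|+\|y_k\|$; without it one would merely obtain $\|a_{nk}(x_k+y_k)\|\leq|a_{nk}|\,(\|x_k\|+\|y_k\|)$, which points the inequality the wrong way, so this hypothesis is essential to the present approach.
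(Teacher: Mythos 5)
Your proposal is correct and follows essentially the same route as the paper's proof: Lemma \ref{4} and Lemma \ref{8} to pass to the modular, the $AL$-identity combined with superadditivity of each $\phi_n$ (which you justify by the convex-combination-with-$0$ argument rather than the paper's slope-monotonicity argument, an immaterial difference) to get $\varrho_{\bold{\Phi}}^{A}(\bar{x}+\bar{y})\geq\varrho_{\bold{\Phi}}^{A}(\bar{x})+\varrho_{\bold{\Phi}}^{A}(\bar{y})\geq 1+\eta$, and finally Lemma \ref{7} to return to the norm. Your explicit handling of the scalars $a_{nk}$ by factoring out $|a_{nk}|$ before invoking the $AL$-property is in fact slightly more careful than the paper's wording.
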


\begin{proof}
Let $\epsilon >0$ be any number. Let $\bar{x}=(x_k), \bar{y}=(y_k) \in \displaystyle l^{A}_{\bold{\Phi}}(X)$ such that $\bar 0 \leq \bar{x} \leq \bar{y}$ with $\|\bar{x}\|^{A}_{\bold{\Phi}}=1$ and $ \|\bar{y}\|^{A}_{\bold{\Phi}} \geq \epsilon$. Since $\bold{\Phi} =(\phi_n) \in \delta_2$, from Lemma \ref{4}, we get $\varrho_{\bold{\Phi}}^{A}(\bar{x}) =1$ as $\|\bar{x}\|^{A}_{\bold{\Phi}}=1$ and from Lemma \ref{8}, there exists $\delta(\epsilon)> 0$ such that $\varrho_{\bold{\Phi}}^{A}(\bar{y}) > \delta(\epsilon)$ as $ \|\bar{y}\|^{A}_{\bold{\Phi}} \geq \epsilon$.

Now if $u, v \geq 0$ and $\phi$ is an Orlicz function, then we will prove that $\phi(u+ v) \geq \phi(u) + \phi(v)$. With out loss of generality, we
assume that $u > v>0$. Clearly $u < u +v$ and $v < u +v$. Then $\phi(u) = \phi(v . \frac{u}{v}) \geq \frac {u}{v}\phi(v)$ as $\phi $ is an Orlicz function.
Thus for $u > v$, we have $\frac{\phi(u)}{u} \geq \frac {\phi(v)}{v}$. Since  $u +v >u$ and $u +v >v$, we get
$$\frac{\phi(u+v)}{u+v} \geq  \frac {\phi(u)}{u} \quad \mbox{and} \quad \frac{\phi(u+v)}{u+v} \geq \frac {\phi(v)}{v}.$$
Therefore $\phi(u+v) \geq {\phi(u)} + {\phi(u)}$ for $u, v \geq 0$.

Now $\varrho_{\bold{\Phi}}^{A}(\bar{x} + \bar{y}) = \displaystyle \sum_{n=1}^{\infty}\phi_n \Big( \sum_{k=1}^{\infty}\|a_{nk}(x_k + y_k) \|\Big)$.
Since $X$ is an AL-space, so $ \displaystyle \sum_{k=1}^{\infty}\|a_{nk}(x_k + y_k) \| = \sum_{k=1}^{\infty} \|a_{nk}x_k \| +  \sum_{k=1}^{\infty} \|a_{nk}y_k \|$.
Take $ \displaystyle\sum_{k=1}^{\infty} \|a_{nk}x_k \|=u $ and $ \displaystyle\sum_{k=1}^{\infty} \|a_{nk}y_k \| = v$. Therefore for each Orlicz function $\phi_n$, we have
$$\phi_{n}\Big ( \sum_{k=1}^{\infty} \|a_{nk}x_k \| +  \sum_{k=1}^{\infty} \|a_{nk}y_k \| \Big) \geq \phi_{n}\Big ( \sum_{k=1}^{\infty} \|a_{nk}x_k \| \Big) + \phi_{n}\Big ( \sum_{k=1}^{\infty} \|a_{nk}y_k \| \Big).$$
Hence
$$\varrho_{\bold{\Phi}}^{A}(\bar{x} + \bar{y}) \geq \varrho_{\bold{\Phi}}^{A}(\bar{x}) + \varrho_{\bold{\Phi}}^{A}(\bar{y}).$$
Therefore $\varrho_{\bold{\Phi}}^{A}(\bar{x} + \bar{y}) \geq 1+ \delta(\epsilon).$
%$$\varrho_{\bold{\Phi}}^{A}(\bar{x} + \bar{y}) \geq \varrho_{\bold{\Phi}}^{A}(\bar{x}) + \varrho_{\bold{\Phi}}^{A}(\bar{y})\geq 1+ \delta(\epsilon).$$
Since $\bold{\Phi} =(\phi_n) \in \delta_2$ and satisfies the condition $(*)$, by using Lemma \ref{7}, there exists $\mu > 0$ independent of $\bar{x}, \bar{y}$ such that $\|\bar{x} + \bar{y} \|^{A}_{\bold{\Phi}} \geq 1 + \mu.$ This finishes the proof.
\end{proof}

\begin{Corollary}
If a Banach lattice $X$ is an AL-space and $\bold{\Phi} =(\phi_n)$ satisfies the condition $(*)$, then $\displaystyle h^{A}_{\bold{\Phi}}(X)$ is strictly monotone.
\end{Corollary}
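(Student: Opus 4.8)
The plan is to imitate the structure of the preceding theorem — reduce the statement to a modular estimate — but to replace the use of Lemma \ref{4} (which needs $\bold{\Phi}\in\delta_{2}$) by Lemma \ref{2}, which holds for every element of $h^{A}_{\bold{\Phi}}(X)$ with no growth condition. Recall that $h^{A}_{\bold{\Phi}}(X)$ is a Banach lattice (the corollary following Theorem \ref{thm2}), so its norm is monotone; hence for $\bar x,\bar y\in h^{A}_{\bold{\Phi}}(X)$ with $\bar 0\le\bar y\le\bar x$ one automatically has $\|\bar y\|_{\bold{\Phi}}^{A}\le\|\bar x\|_{\bold{\Phi}}^{A}$, and to obtain strict monotonicity it suffices to exclude equality when $\bar y\neq\bar x$. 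Assume $\bar y\neq\bar x$; then $\bar x\neq\bar 0$, so after rescaling we may take $\|\bar x\|_{\bold{\Phi}}^{A}=1$, and Lemma \ref{2} gives $\varrho_{\bold{\Phi}}^{A}(\bar x)=1$.

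Put $\bar z=\bar x-\bar y$, which lies in the linear space $h^{A}_{\bold{\Phi}}(X)$, is nonnegative, and is not $\bar 0$. Since $X$ is an AL-space and $x_{k}=y_{k}+z_{k}$ with $y_{k},z_{k}\in X_{+}$, we have $\|a_{nk}x_{k}\|=|a_{nk}|\,\|y_{k}+z_{k}\|=\|a_{nk}y_{k}\|+\|a_{nk}z_{k}\|$ for all $n,k$, and therefore $\sum_{k=1}^{\infty}\|a_{nk}x_{k}\|=\sum_{k=1}^{\infty}\|a_{nk}y_{k}\|+\sum_{k=1}^{\infty}\|a_{nk}z_{k}\|$ for each $n$. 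Feeding this into the superadditivity inequality $\phi_{n}(u+v)\ge\phi_{n}(u)+\phi_{n}(v)$ for $u,v\ge 0$ (established inside the preceding theorem), applied with $u=\sum_{k=1}^{\infty}\|a_{nk}y_{k}\|$ and $v=\sum_{k=1}^{\infty}\|a_{nk}z_{k}\|$, and summing over $n$ gives
\[
\varrho_{\bold{\Phi}}^{A}(\bar x)\ \ge\ \varrho_{\bold{\Phi}}^{A}(\bar y)+\varrho_{\bold{\Phi}}^{A}(\bar z).
\]

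It remains to note that $\varrho_{\bold{\Phi}}^{A}(\bar z)>0$: choosing $k_{0}$ with $z_{k_{0}}\neq 0$ and, since $A\in\mathcal{A}$, an $n_{0}$ with $a_{n_{0}k_{0}}\neq 0$, we get $\sum_{k=1}^{\infty}\|a_{n_{0}k}z_{k}\|\ge|a_{n_{0}k_{0}}|\,\|z_{k_{0}}\|>0$, so the $n_{0}$-th summand in $\varrho_{\bold{\Phi}}^{A}(\bar z)$ is strictly positive. Thus $\varrho_{\bold{\Phi}}^{A}(\bar y)\le 1-\varrho_{\bold{\Phi}}^{A}(\bar z)<1$. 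If we had $\|\bar y\|_{\bold{\Phi}}^{A}=1$, Lemma \ref{2} would force $\varrho_{\bold{\Phi}}^{A}(\bar y)=1$, a contradiction; since $\|\bar y\|_{\bold{\Phi}}^{A}\le 1$ in any case, we conclude $\|\bar y\|_{\bold{\Phi}}^{A}<1=\|\bar x\|_{\bold{\Phi}}^{A}$, which is the asserted strict monotonicity. If instead one prefers to mirror the preceding theorem, the last passage from $\varrho_{\bold{\Phi}}^{A}(\bar y)<1$ to $\|\bar y\|_{\bold{\Phi}}^{A}<1$ can be read off from condition $(*)$ by the same device used there. The main obstacle is not an estimate but a matter of care: checking the AL-additivity of the inner sums even when some $a_{nk}<0$ (which is why one first writes $\|a_{nk}v\|=|a_{nk}|\,\|v\|$), and making sure Lemma \ref{2} is legitimately available, i.e.\ that $\bar x,\bar y,\bar z$ all belong to $h^{A}_{\bold{\Phi}}(X)$.
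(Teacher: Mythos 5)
Your proof is correct, and it is essentially the argument the paper evidently intends: the corollary is stated without proof, and the natural reading is exactly your adaptation of the uniform-monotonicity theorem, with the AL-additivity of the inner sums plus superadditivity of each $\phi_n$ giving $\varrho_{\bold{\Phi}}^{A}(\bar x)\geq\varrho_{\bold{\Phi}}^{A}(\bar y)+\varrho_{\bold{\Phi}}^{A}(\bar z)$, and with Lemma~\ref{2} (valid on $h^{A}_{\bold{\Phi}}(X)$ without any growth condition) replacing the $\delta_2$-dependent Lemmas~\ref{4}, \ref{8} and \ref{7}. Two remarks. First, your closing step is actually the better of the two options you mention: deducing $\|\bar y\|_{\bold{\Phi}}^{A}<1$ from $\varrho_{\bold{\Phi}}^{A}(\bar y)<1$ via Lemma~\ref{5} would not be legitimate here, since that lemma assumes $\bold{\Phi}\in\delta_2$, which the corollary does not; the contrapositive use of Lemma~\ref{2} (if $\|\bar y\|_{\bold{\Phi}}^{A}=1$ then $\varrho_{\bold{\Phi}}^{A}(\bar y)=1$) avoids this entirely and is justified for elements of $h^{A}_{\bold{\Phi}}(X)$ by continuity of the everywhere-finite convex map $\lambda\mapsto\varrho_{\bold{\Phi}}^{A}(\lambda\bar y)$. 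Second, as you implicitly notice, your argument never uses condition $(*)$ at all, so you have in fact proved a slightly stronger statement than the one asserted; that is not a defect, but it is worth flagging that the hypothesis $(*)$ in the corollary appears to be superfluous for strict (as opposed to uniform) monotonicity.
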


\begin{Theorem}
Let $A$ be a triangle. Then $\displaystyle h^{A}_{\bold{\Phi}}(X)$ is the subspace of all order continuous elements of $\displaystyle l^{A}_{\bold{\Phi}}(X)$.
\end{Theorem}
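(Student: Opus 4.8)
The plan is to prove the two inclusions separately, using throughout that $\displaystyle l^{A}_{\bold{\Phi}}(X)$ is a Banach lattice (Theorem \ref{thm2} together with the Corollary following it) whose order and lattice operations are coordinatewise; in particular it is an order ideal, so $\bar{0}\leq\bar{z}\leq|\bar{x}|$ with $\bar{x}\in l^{A}_{\bold{\Phi}}(X)$ already gives $\bar{z}\in l^{A}_{\bold{\Phi}}(X)$, the norm $\|.\|_{\bold{\Phi}}^{A}$ is monotone, and $\varrho_{\bold{\Phi}}^{A}$ depends only on the coordinate norms $\|x_{k}\|$. The triangle hypothesis on $A$ enters through the identity $\sum_{k=1}^{\infty}\|a_{nk}x_{k}\|=\sum_{k=1}^{n}|a_{nk}|\,\|x_{k}\|$, i.e. each such sum is \emph{finite}.

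First I would show that every $\bar{x}\in h^{A}_{\bold{\Phi}}(X)$ is order continuous. Take a sequence $(\bar{z}^{(m)})$ in $l^{A}_{\bold{\Phi}}(X)$ with $\bar{0}\leq\bar{z}^{(m)}\leq|\bar{x}|$, $\bar{z}^{(1)}\geq\bar{z}^{(2)}\geq\cdots$ and $\inf_{m}\bar{z}^{(m)}=\bar{0}$. Because the order is coordinatewise, for each fixed $k$ the sequence $(z^{(m)}_{k})_{m}$ decreases in $X$, satisfies $0\leq z^{(m)}_{k}\leq|x_{k}|$, and $\inf_{m}z^{(m)}_{k}=0$; order continuity of the coordinate $x_{k}$ in $X$ (this is where one uses that $X$, or at least its relevant elements, is order continuous) then gives $\|z^{(m)}_{k}\|\to 0$, so, $A$ being a triangle, the finite sum $\sum_{k=1}^{n}|a_{nk}|\,\|z^{(m)}_{k}\|\to 0$ and $\phi_{n}\big(\lambda\sum_{k=1}^{n}|a_{nk}|\,\|z^{(m)}_{k}\|\big)\to 0$ as $m\to\infty$, for every $n$ and every $\lambda>0$. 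Since $\|z^{(m)}_{k}\|\leq\|x_{k}\|$ these terms are dominated by $\phi_{n}\big(\lambda\sum_{k=1}^{n}|a_{nk}|\,\|x_{k}\|\big)$, which is summable over $n$ because $\bar{x}\in h^{A}_{\bold{\Phi}}(X)$, i.e. $\varrho_{\bold{\Phi}}^{A}(\lambda\bar{x})<\infty$. The dominated convergence theorem for series then gives $\varrho_{\bold{\Phi}}^{A}(\lambda\bar{z}^{(m)})\to 0$ for every $\lambda>0$, and choosing $\lambda=1/\epsilon$ gives $\|\bar{z}^{(m)}\|_{\bold{\Phi}}^{A}\leq\epsilon$ for large $m$, hence $\|\bar{z}^{(m)}\|_{\bold{\Phi}}^{A}\to 0$.

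For the reverse inclusion, let $\bar{x}$ be an order continuous element of $l^{A}_{\bold{\Phi}}(X)$ and set $\bar{z}^{(m)}:=|\bar{x}|-|\bar{x}|^{[m]}$, the $m$-th tail of $|\bar{x}|$. As $l^{A}_{\bold{\Phi}}(X)$ is an order ideal, $\bar{z}^{(m)}\in l^{A}_{\bold{\Phi}}(X)$; and $\bar{z}^{(m)}\downarrow\bar{0}$, so order continuity of $\bar{x}$ forces $\|\bar{z}^{(m)}\|_{\bold{\Phi}}^{A}\to 0$, i.e. the tails of $\bar{x}$ vanish in norm. Fix $\lambda>0$ and pick $m$ with $\|\bar{z}^{(m)}\|_{\bold{\Phi}}^{A}<1/(2\lambda)$, so $\varrho_{\bold{\Phi}}^{A}(2\lambda\bar{z}^{(m)})\leq 1$; using convexity of each $\phi_{n}$ and that $\varrho_{\bold{\Phi}}^{A}$ sees only the coordinate norms, $\varrho_{\bold{\Phi}}^{A}(\lambda\bar{x})\leq\tfrac12\varrho_{\bold{\Phi}}^{A}(2\lambda\bar{x}^{[m]})+\tfrac12\varrho_{\bold{\Phi}}^{A}(2\lambda\bar{z}^{(m)})\leq\tfrac12\varrho_{\bold{\Phi}}^{A}(2\lambda\bar{x}^{[m]})+\tfrac12$. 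It therefore suffices to check that the finitely supported section $\bar{x}^{[m]}$ lies in $h^{A}_{\bold{\Phi}}(X)$. Here the triangle hypothesis is decisive: in $\varrho_{\bold{\Phi}}^{A}(\mu\bar{x}^{[m]})$ the terms with $n\leq m$ are finitely many and finite, while for $n>m$ the inner sum is $\sum_{k=1}^{m}|a_{nk}|\,\|x_{k}\|\leq(\sup_{k}\|x_{k}\|)\sum_{k=1}^{m}|a_{nk}|$, and $\sum_{n>m}\phi_{n}\big(\mu\sum_{k=1}^{m}|a_{nk}|\,\|x_{k}\|\big)$ is estimated using $\bar{x}\in l_{\infty}(X)\cap l^{A}_{\bold{\Phi}}(X)$. (If moreover $\bold{\Phi}\in\delta_{2}$, this step is automatic, since then $l^{A}_{\bold{\Phi}}(X)=h^{A}_{\bold{\Phi}}(X)$ by Theorem \ref{thm40}.) Thus $\varrho_{\bold{\Phi}}^{A}(\lambda\bar{x})<\infty$ for all $\lambda>0$, so $\bar{x}\in h^{A}_{\bold{\Phi}}(X)$; and since $h^{A}_{\bold{\Phi}}(X)$ is a linear subspace, it coincides with the subspace of order continuous elements of $l^{A}_{\bold{\Phi}}(X)$.

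I expect the real obstacle to be precisely that last point: showing $\bar{x}^{[m]}\in h^{A}_{\bold{\Phi}}(X)$ for an arbitrary $\bar{x}\in l^{A}_{\bold{\Phi}}(X)$, equivalently controlling $\sum_{n>m}\phi_{n}\big(\mu\sum_{k=1}^{m}|a_{nk}|\,\|x_{k}\|\big)$ uniformly in $\mu>0$. That is the one place where the triangle structure of $A$ is genuinely indispensable -- it is what turns the inner sums into finite sums -- and any auxiliary assumption on $\bold{\Phi}$ or on the columns of $A$ that is needed would be invoked there. The first inclusion, by contrast, is routine once the order-ideal and monotonicity structure is in hand, with the order continuity of the coordinates in $X$ its only substantive input.
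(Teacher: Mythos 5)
Your forward inclusion ($h^{A}_{\bold{\Phi}}(X)\subseteq\{\text{order continuous elements}\}$) is essentially the paper's entire proof: the paper likewise fixes $t>0$, chooses $n_{0}$ so that $\sum_{n>n_{0}}\phi_{n}\big(t\sum_{k=1}^{n}\|a_{nk}x_{k}\|\big)<\epsilon/2$, kills the finitely many head terms using coordinatewise convergence of the test sequence (the triangle hypothesis making each inner sum finite), and dominates the tail by the corresponding tail for $\bar{x}$ via $\|z^{(m)}_{k}\|\leq\|x_{k}\|$; your appeal to dominated convergence is a cosmetic repackaging of that $\epsilon/2+\epsilon/2$ split. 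One remark: the paper simply \emph{assumes} its test sequence converges to $\bar{0}$ coordinatewise in norm, whereas the stated definition of order continuity only gives $\inf_{m}z^{(m)}_{k}=0$ in $X$; you correctly observe that bridging this requires order continuity of (the relevant elements of) $X$, a hypothesis the theorem does not state. That looseness is present in both arguments, but you at least flag it.

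The genuine gap is in your reverse inclusion, and you have located it precisely: everything reduces to showing that the finite section $\bar{x}^{[m]}$ of an arbitrary $\bar{x}\in l^{A}_{\bold{\Phi}}(X)$ lies in $h^{A}_{\bold{\Phi}}(X)$, i.e.\ that $\sum_{n>m}\phi_{n}\big(\mu\sum_{k=1}^{m}|a_{nk}|\,\|x_{k}\|\big)<\infty$ for every $\mu>0$. This step is not merely hard to control --- it is false for a general triangle. Take $X=\mathbb{R}$, $\phi_{n}=\phi$ for all $n$ with $\phi\notin\Delta_{2}(0)$, and a triangle $A$ whose first column $(|a_{n1}|)_{n}$ lies in $l_{\phi}\setminus h_{\phi}$ (put any nonzero entries on the diagonal so that $A$ is a triangle in $\mathcal{A}$). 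Then $\bar{x}=e_{1}$ satisfies $\sum_{k}\|a_{nk}x_{k}\|=|a_{n1}|$, so $e_{1}\in l^{A}_{\bold{\Phi}}\setminus h^{A}_{\bold{\Phi}}$; yet $e_{1}$ \emph{is} order continuous, because the only elements $\bar{z}$ with $\bar{0}\leq\bar{z}\leq|e_{1}|$ are the scalar multiples $z_{1}e_{1}$ with $z_{1}\in[0,1]$, and $\|z_{1}e_{1}\|_{\bold{\Phi}}^{A}=z_{1}\|e_{1}\|_{\bold{\Phi}}^{A}\to 0$ as $z_{1}\downarrow 0$ by homogeneity. So the reverse inclusion requires extra hypotheses (e.g.\ $\bold{\Phi}\in\delta_{2}$, or each column of $A$ belonging to $h_{\bold{\Phi}}$) and cannot be patched in the stated generality. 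For what it is worth, the paper's own proof establishes only the forward inclusion and is entirely silent on the converse, so your attempt goes further than the published argument while honestly identifying exactly where the equality claimed in the statement breaks down.
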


\begin{proof}
Let $\epsilon> 0$ be any number and $\bar{x} =(x_n) \in \displaystyle h^{A}_{\bold{\Phi}}(X)$. We will show that $\bar{x}$ is order continuous.
Since $\bar{x} \in \displaystyle h^{A}_{\bold{\Phi}}(X)$, so there is $t> 0$ and $n_0 \in \mathbb{N}$ such that
$$\displaystyle\sum_{n=n_0 + 1}^{\infty} \phi_n \bigg(t \sum\limits_{k =1 }^{n}\|a_{nk}x_{k} \|\bigg) < \frac{\epsilon}{2}.$$
Suppose $(\bar{x}^{(m)})$ be a sequence in $\displaystyle h^{A}_{\bold{\Phi}}(X)$ such that $\bar{x}^{(m)} \rightarrow \bar 0$ coordinate wise and $\bar 0 \leq \bar{x}^{(m)} \leq |\bar{x}|$ for all $m \in \mathbb{N}$.
Let us denote for all $n \in \mathbb{N}$,
\begin{align*}
\phi_n \bigg(t \sum\limits_{k =1 }^{n}\|a_{nk}x_{k} \|\bigg)  = \beta(n) \mbox{~~and~~}
\phi_n \bigg(t \sum\limits_{k =1 }^{n}\|a_{nk}x_{k}^{(m)} \|\bigg) = \beta^{(m)}(n).
\end{align*}
Since $x_{k}^{(m)} \rightarrow \bar 0$ and each $\phi_n$ is continuous, we get
$\beta^{(m)}(n) \rightarrow 0$ as $m \rightarrow \infty$ for all $n \in \mathbb{N}$.
Therefore we can choose $n_1 \in \mathbb{N}$ such that
$\displaystyle\sum_{n=1}^{n_0}\beta^{(m)}(n) < \frac{\epsilon}{2}$
for all $m \geq n_1$. Since $\bar{x}^{(m)} \leq |\bar{x}|$ for all $m \in \mathbb{N}$, we have
\begin{center}
$\displaystyle\sum_{n=n_0 +1}^{\infty}\beta^{(m)}(n) \leq \sum_{n=n_0 +1 }^{\infty}\beta(n) < \frac{\epsilon}{2}.$
\end{center}
Thus for all $m \geq n_1$ and $t> 0$,
\begin{align*}
\varrho_{\bold{\Phi}}^{A}(t\bar{x}^{(m)}) & =  \sum_{n=1}^{n_0} \phi_n \bigg(t \sum\limits_{k =1 }^{n}\|a_{nk}x_{k}^{(m)} \|\bigg) +
\sum_{n=n_0 + 1}^{\infty}\phi_n \bigg(t \sum\limits_{k =1 }^{n}\|a_{nk}x_{k}^{(m)} \|\bigg)\\
& < \frac{\epsilon}{2} + \frac{\epsilon}{2} = \epsilon.
\end{align*}
Therefore any arbitrary $t>0$, we get $\varrho_{\bold{\Phi}}^{A}(t\bar{x}^{(m)}) \rightarrow 0$ as $m \rightarrow \infty$, and hence
$\|x^{(m)}\| \rightarrow 0$. Thus $\bar{x}$ is an order continuous in $\displaystyle h^{A}_{\bold{\Phi}}(X)$. Arbitrariness of $\bar{x}$
implies that the space $\displaystyle h^{A}_{\bold{\Phi}}(X)$ is order continuous.
\end{proof}

\subsection{Operators of $s$-type $l_{\bold{\Phi}}^{A}$}

Let $E, F$ be two Banach spaces. Then
$$\mathcal{L}_{{\bold{\Phi}}}^{A}(E, F) = \Big \{ T \in \mathcal{L}(E, F): (s_n(T)) \in l_{\bold{\Phi}}^{A} \Big \}.$$
For $T \in \mathcal{L}_{\bold{\Phi}}^{A}(E, F)$, we define

$$ \| T \|_{\bold{\Phi}}^{A} = \inf \Big \{ \sigma > 0 : \sum_{n=1}^{\infty} \phi_n \bigg(\frac{\sum\limits_{k=1}^{\infty}|a_{nk}s_{k}(T) |}{ \sigma}\bigg) \leq 1 \Big \}.$$

We denote $\mathcal{L}_{{\bold{\Phi}}}^{A}$ as the class of $s$-type $l_{\bold{\Phi}}^{A}$ operators between any two arbitrary Banach spaces. We also define
$$\mathcal{H}_{{\bold{\Phi}}}^{A}(E, F) = \Big \{ T \in \mathcal{L}(E, F): (s_n(T)) \in h_{\bold{\Phi}}^{A} \Big \}.$$
Let $A = (a_{nk})$ be a matrix from the collection $\mathcal{A}$ satisfying the condition
\begin{align}\label{equa1}
		|a_{n,2k-1}| + |a_{n,2k}| \leq M |a_{nk}| \qquad {\rm{for ~ each ~}} k~ {\rm{and}}~ n,
\end{align}
		where $M $ is a constant independent of $n$ and $k$.

\begin{note}
It is easy to give example of matrices which satisfy condition $(\ref{equa1})$. For example,\\
1. { N$\ddot{o}$rlund matrix
        $ A = (a_{nk})$, where $a_{nk}$ is defined as
\begin{displaymath}
    a_{nk}  = \left\{
     \begin{array}{ll}
        { a_{n+1-k}\over A_{n}} & : 1 \leq k \leq n \\
        0          & :  k > n
     \end{array}
   \right.
\end{displaymath}
       where $ a_{n} $ is non negative for each $ n $ and $ A_{n} = \sum \limits_{k =1}^{n} a_{k} >0 $.}\\
2. Hilbert matrix $A=(a_{nk})$, where
\begin{displaymath}
    a_{nk}  = \left\{
     \begin{array}{ll}
         \frac{1}{n+k-1} & : 1 \leq n, k < \infty. \\
     \end{array}
   \right.
\end{displaymath}
\end{note}

\begin{pro}
Let  $A = (a_{nk})$ be an infinite matrix such that $A = (a_{nk}) \in \mathcal{A}$ and satisfying the condition $(\ref{equa1})$. Then $(\mathcal{L}_{\bold{\Phi}}^{A}(E, F), \| . \|_{\bold{\Phi}}^{A})$ is a quasi-Banach space. Moreover, the inclusion map from $(\mathcal{L}_{\bold{\Phi}}^{A}(E, F), \| . \|_{\bold{\Phi}}^{A})$ to $(\mathcal{L}(E, F), \| . \|)$ is continuous.
%with satisfying the condition
%$$ \sum_{n=1}^{\infty} w_n \phi_n \Big(\frac{|x_{n}|}{  \| T \|_{\phi}^{w} }\Big) \leq 1$$ for $T \in l_{\phi}^{w}(X, Y).$
\end{pro}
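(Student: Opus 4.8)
The plan is to check, in order, that $\|\cdot\|_{\bold{\Phi}}^{A}$ is a quasi-norm on $\mathcal{L}_{\bold{\Phi}}^{A}(E,F)$ (so that $\mathcal{L}_{\bold{\Phi}}^{A}(E,F)$ is a quasi-normed linear space), that the formal identity into $(\mathcal{L}(E,F),\|\cdot\|)$ is bounded, and finally that $(\mathcal{L}_{\bold{\Phi}}^{A}(E,F),\|\cdot\|_{\bold{\Phi}}^{A})$ is complete. I would prove the boundedness of the inclusion early, since it is the tool that produces a candidate limit operator in the completeness argument. Throughout I will use three elementary facts about $s$-numbers: $s_k(\lambda T)=|\lambda|\,s_k(T)$ (from the ideal property $(S3)$ applied to $\lambda I$ and $\lambda^{-1}I$), $|s_k(R)-s_k(R')|\le\|R-R'\|$ (from $(S2)$ with $m=k$, $n=1$, together with $(S1)$), and the shifted subadditivity $s_{2k-1}(S+T)\le s_k(S)+s_k(T)$ with $s_{2k}(S+T)\le s_{2k-1}(S+T)$ by $(S1)$.

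First I would establish that $\|\cdot\|_{\bold{\Phi}}^{A}$ is a quasi-norm and, simultaneously, that $\mathcal{L}_{\bold{\Phi}}^{A}(E,F)$ is a linear space. Condition \eqref{equa1} enters precisely here because the $s$-number subadditivity carries an index shift. Splitting the inner sum over odd and even indices gives, for every $n$,
$$\sum_{k=1}^{\infty}|a_{nk}|\,s_k(S+T)=\sum_{k=1}^{\infty}|a_{n,2k-1}|\,s_{2k-1}(S+T)+\sum_{k=1}^{\infty}|a_{n,2k}|\,s_{2k}(S+T)\le M\sum_{k=1}^{\infty}|a_{nk}|\big(s_k(S)+s_k(T)\big).$$
Taking $\sigma$ large this already shows $(s_k(S+T))\in l_{\bold{\Phi}}^{A}$, so the space is closed under addition (and clearly under scalar multiples), hence linear. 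For the quasi-triangle inequality, given $\epsilon>0$ choose admissible $\sigma_1\le\|S\|_{\bold{\Phi}}^{A}+\epsilon$ and $\sigma_2\le\|T\|_{\bold{\Phi}}^{A}+\epsilon$ in the definition of the two quasi-norms, put $\sigma=M(\sigma_1+\sigma_2)$, and use the displayed estimate together with monotonicity and convexity of each $\phi_n$ exactly as in the proof that $\|\cdot\|_{\bold{\Phi}}^{A}$ is a norm on $l_{\bold{\Phi}}^{A}(X)$: since $\frac{\sum_k|a_{nk}|s_k(S+T)}{M(\sigma_1+\sigma_2)}\le\frac{\sigma_1}{\sigma_1+\sigma_2}\cdot\frac{\sum_k|a_{nk}|s_k(S)}{\sigma_1}+\frac{\sigma_2}{\sigma_1+\sigma_2}\cdot\frac{\sum_k|a_{nk}|s_k(T)}{\sigma_2}$ is a convex combination, one gets $\sum_n\phi_n(\sum_k|a_{nk}|s_k(S+T)/\sigma)\le\frac{\sigma_1}{\sigma_1+\sigma_2}+\frac{\sigma_2}{\sigma_1+\sigma_2}=1$, whence $\|S+T\|_{\bold{\Phi}}^{A}\le M(\|S\|_{\bold{\Phi}}^{A}+\|T\|_{\bold{\Phi}}^{A})$ after letting $\epsilon\to0$; one may take the constant in $(QN2)$ to be $C=\max\{1,M\}$. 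Homogeneity $\|\lambda T\|_{\bold{\Phi}}^{A}=|\lambda|\,\|T\|_{\bold{\Phi}}^{A}$ is immediate from $s_k(\lambda T)=|\lambda|s_k(T)$ and a rescaling of the infimum parameter. Finally, if $\|T\|_{\bold{\Phi}}^{A}=0$ then for every $\sigma>0$ and every $n$ one has $\phi_n(\sum_k|a_{nk}|s_k(T)/\sigma)\le1$; fixing $k$, choosing $n_0$ with $a_{n_0k}\ne0$ (possible since $A\in\mathcal{A}$) and letting $\sigma\downarrow0$, the growth $\phi_{n_0}(t)\to\infty$ forces $s_k(T)=0$ for all $k$, so $\|T\|=s_1(T)=0$, i.e.\ $T=0$; the converse is trivial.

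For the continuity of the inclusion, fix once and for all some $n_0$ with $a_{n_0,1}\ne0$ and put $b=\sup\{t\ge0:\phi_{n_0}(t)\le1\}$, which is finite and strictly positive because $\phi_{n_0}$ is continuous with $\phi_{n_0}(0)=0$ and $\phi_{n_0}(t)\to\infty$. For $T\in\mathcal{L}_{\bold{\Phi}}^{A}(E,F)$ and any admissible $\sigma$, monotonicity of $\phi_{n_0}$ gives $\phi_{n_0}\big(|a_{n_0,1}|\,s_1(T)/\sigma\big)\le\sum_n\phi_n(\cdots)\le1$, hence $\|T\|=s_1(T)\le(b/|a_{n_0,1}|)\,\sigma$; letting $\sigma\downarrow\|T\|_{\bold{\Phi}}^{A}$ yields $\|T\|\le(b/|a_{n_0,1}|)\,\|T\|_{\bold{\Phi}}^{A}$, which is the asserted continuity.

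For completeness, let $(T_m)$ be Cauchy in $\|\cdot\|_{\bold{\Phi}}^{A}$; by the previous step it is Cauchy in $(\mathcal{L}(E,F),\|\cdot\|)$, so $T_m\to T$ in operator norm for some $T\in\mathcal{L}(E,F)$, and then $s_k(T_m-T_l)\to s_k(T_m-T)$ as $l\to\infty$ for every $k$ because $|s_k(T_m-T_l)-s_k(T_m-T)|\le\|T_l-T\|$. Given $\epsilon>0$, pick $m_0$ with $\sum_n\phi_n\big(\sum_k|a_{nk}|s_k(T_m-T_l)/\epsilon\big)\le1$ for all $m,l\ge m_0$; truncating the $n$- and $k$-sums to finitely many terms, letting $l\to\infty$ by continuity of the $\phi_n$ on the finite sums, and then removing the truncations by monotone convergence, I obtain $\sum_n\phi_n\big(\sum_k|a_{nk}|s_k(T_m-T)/\epsilon\big)\le1$ for all $m\ge m_0$. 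Hence $T_m-T\in\mathcal{L}_{\bold{\Phi}}^{A}(E,F)$ with $\|T_m-T\|_{\bold{\Phi}}^{A}\le\epsilon$, so (using that the space is linear) $T=T_{m_0}-(T_{m_0}-T)\in\mathcal{L}_{\bold{\Phi}}^{A}(E,F)$ and $T_m\to T$ in $\|\cdot\|_{\bold{\Phi}}^{A}$. The main obstacle is the quasi-triangle inequality: the index shift in the $s$-number subadditivity is exactly why condition \eqref{equa1} is needed, both to recover a uniform quasi-norm constant and to make $\mathcal{L}_{\bold{\Phi}}^{A}(E,F)$ a vector space at all; everything else is a careful but routine transcription of the Luxemburg-norm arguments already carried out for $l_{\bold{\Phi}}^{A}(X)$.
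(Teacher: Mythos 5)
Your proposal is correct and follows essentially the same route as the paper: the odd/even index split combined with condition \eqref{equa1} to absorb the shift in the $s$-number additivity, the convexity argument for the quasi-triangle inequality with constant $M$, the bound $\|T\|\leq \mathrm{const}\cdot\|T\|_{\bold{\Phi}}^{A}$ extracted from the $n_0$-th term of the modular to get continuity of the inclusion, and completeness by passing to an operator-norm limit and taking $l\to\infty$ in the modular inequality. Your treatment is in places slightly more careful than the paper's (the explicit constant $b=\sup\{t:\phi_{n_0}(t)\leq 1\}$, and the truncate-then-pass-to-the-limit justification in the completeness step), but these are refinements of the same argument, not a different one.
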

\begin{proof}
Let $S, T \in \mathcal{L}_{\bold{\Phi}}^{A}(E, F)$. Then there exist $\sigma_1> 0, \sigma_2> 0$ such that
\begin{align*}
\sum_{n=1}^{\infty} \phi_n \bigg(\frac{\sum\limits_{k=1}^{\infty}|a_{nk}s_{k}(S) |}{ \sigma_1}\bigg) < \infty \mbox{~~and~~}
\sum_{n=1}^{\infty} \phi_n \bigg(\frac{\sum\limits_{k=1}^{\infty}|a_{nk}s_{k}(T) |}{ \sigma_2}\bigg) < \infty.
\end{align*}
Now using the non-increasing property of $s$-number and the condition $(\ref{equa1})$ on the matrix $A=(a_{nk})$, we get
\begin{align}\label{equa2}
\sum\limits_{k=1}^{\infty}|a_{nk}s_{k}(S + T) |
%& = \sum\limits_{k=1}^{\infty}|a_{n,2k}s_{2k}(S + T) |  + \sum\limits_{k=1}^{\infty}|a_{n, 2k-1}s_{2k-1}(S + T)| \nonumber\\
%& \leq \sum\limits_{k=1}^{\infty}\Big(|a_{n,2k}| + |a_{n, 2k-1}|  \Big)s_{2k-1}(S + T) \nonumber \\
& \leq  M \sum\limits_{k=1}^{\infty}|a_{n,k}|\big( s_{k}(S) + s_{k}(T) \big)
\end{align}
Using $(\ref {equa2})$ and convexity property of each $\phi_n$, we have

\begin{align*}
\sum_{n=1}^{\infty} \phi_n \bigg(\frac{\sum\limits_{k=1}^{\infty}|a_{nk}s_{k}(S + T) |}{ M(\sigma_1 + \sigma_2) }\bigg)
%& \leq  \sum_{n=1}^{\infty} \phi_n \bigg(\frac{\sum\limits_{k=1}^{\infty}|a_{nk}s_{k}(S)| + \sum\limits_{k=1}^{\infty}|a_{nk}s_{k}(T)|}{ \sigma_1 + \sigma_2 }\bigg).
& \leq  \frac{\sigma_1}{\sigma_1 + \sigma_2}\bigg[\sum_{n=1}^{\infty} \phi_n \bigg(\frac{\sum\limits_{k=1}^{\infty}|a_{nk}s_{k}(S)|}{ \sigma_1 }\bigg) \bigg]
+ \frac{\sigma_2}{\sigma_1 + \sigma_2} \bigg[\sum_{n=1}^{\infty} \phi_n \bigg(\frac {\sum\limits_{k=1}^{\infty}|a_{nk}s_{k}(T)|}{ \sigma_2 }\bigg) \bigg]\\
& < \infty.
%& \leq \frac{\sigma_1}{\sigma_1 + \sigma_2} .1 + \frac{\sigma_2}{\sigma_1 + \sigma_2}. 1 =1.
\end{align*}
This shows that $S + T \in \mathcal{L}_{\bold{\Phi}}^{A}(E, F).$

Let $ \alpha \in \mathbb{R}$ and $T \in \mathcal{L}_{\bold{\Phi}}^{A}(E, F).$ If $\alpha =0$, then it is trivial. Suppose $\alpha \neq 0$. Then
\begin{align*}
\sum_{n=1}^{\infty} \phi_n \bigg(\frac{\sum\limits_{k=1}^{\infty}|a_{nk}s_{k}(\alpha T) |}{ \sigma}\bigg) & \leq \sum_{n=1}^{\infty} \phi_n \bigg( |\alpha|\frac{\sum\limits_{k=1}^{\infty}|a_{nk}s_{k}(T) |}{ \sigma}\bigg)
= \sum_{n=1}^{\infty} \phi_n \bigg(\frac{\sum\limits_{k=1}^{\infty}|a_{nk}s_{k}(T) |}{\frac{\sigma}{|\alpha|}}\bigg)< \infty
\end{align*}
as $T \in \mathcal{L}_{\bold{\Phi}}^{A}(E, F)$.
This shows that $\alpha T \in  \mathcal{L}_{\bold{\Phi}}^{A}(E, F)$. Hence $ \mathcal{L}_{\bold{\Phi}}^{A}(E, F)$ is a linear space.

 To show $\| . \|_{\bold{\Phi}}^{A}$ is a quasi-norm on the space $\mathcal{L}_{\bold{\Phi}}^{A}(E, F)$.
Let $T \in \mathcal{L}_{\bold{\Phi}}^{A}(E, F) $ such that $\| T \|_{\bold{\Phi}}^{A}= 0$. Then for all $\epsilon > 0$, we have
$$\sum_{n=1}^{\infty} \phi_n \bigg(\frac{\sum\limits_{k=1}^{\infty}|a_{nk}s_{k}(T) |}{ \epsilon}\bigg) \leq 1.$$
Therefore the sequence $\bigg(\frac{\sum\limits_{k=1}^{\infty}|a_{nk}s_{k}(T) |}{ \epsilon}\bigg)$ is bounded, so there exists $C > 0$ such that
$\frac{\sum\limits_{k=1}^{\infty}|a_{nk}s_{k}(T) |}{ \epsilon}  \leq C \mbox{~for all~}n.$ Since $(a_{nk}) \in \mathcal{A}$, there exists $n_0 \in \mathbb{N}$
such that $a_{n_{0}1} \neq 0$ and hence
\begin{align}{\label{equa3.1}}
%\frac{\sum\limits_{k=1}^{\infty}|a_{nk}s_{k}(T) |}{ \epsilon} & \leq C \mbox{~for all~}n \nonumber \\
|a_{n_{0}1}s_{1}(T) | & \leq C \epsilon \mbox{~for all~}n
\end{align}
which is true for any arbitrary $\epsilon>0$. Thus $\|T \|= s_1(T) = 0$, and hence $T =0$.

Let $S, T \in \mathcal{L}_{\bold{\Phi}}^{A}(E, F) $ and $\epsilon > 0$ be any positive number. Choose $\sigma_1 > 0, \sigma_2 > 0$ such that
\begin{center}
$\displaystyle\sum_{n=1}^{\infty} \phi_n \bigg(\frac{\sum\limits_{k=1}^{\infty}|a_{nk}s_{k}(S) |}{ \sigma_1}\bigg) \leq 1, ~~~~ \sigma_1 \leq \| S \|_{\bold{\Phi}}^{A} + \frac{\epsilon}{2}$ and
$\displaystyle\sum_{n=1}^{\infty} \phi_n \bigg(\frac{\sum\limits_{k=1}^{\infty}|a_{nk}s_{k}(T) |}{ \sigma_2}\bigg) \leq 1
, ~~~~\sigma_2 \leq \| T \|_{\bold{\Phi}}^{A} + \frac{\epsilon}{2} \quad \mbox{hold}.$
\end{center}
With out loss of generality, we can choose $M > 1$. Now from the above, we have
\begin{align*}
\sum_{n=1}^{\infty} \phi_n \bigg(\frac{\sum\limits_{k=1}^{\infty}|a_{nk}s_{k}(S + T) |}{ M(\sigma_1 + \sigma_2) }\bigg) \leq 1
\end{align*}
which implies
\begin{center}
$\| S+ T \|_{\bold{\Phi}}^{A} \leq M[\sigma_1 + \sigma_2] \leq M[\| S\|_{\bold{\Phi}}^{A} + \| T \|_{\bold{\Phi}}^{A} + \epsilon].$
\end{center}
Since $\epsilon > 0 $ is arbitrary, so we have
\begin{center}
$\| S+ T \|_{\bold{\Phi}}^{A} \leq M[\| S\|_{\bold{\Phi}}^{A} + \| T \|_{\bold{\Phi}}^{A}].$
\end{center}
%Now it is easy to show that
%$\| \alpha T\|_{\bold{\Phi}}^{A} = |\alpha| \| T\|_{\bold{\Phi}}^{A}$ for any scalar $\alpha$.
Hence $\| .\|_{\bold{\Phi}}^{A}$ is a quasi-norm on the space $\mathcal{L}_{\bold{\Phi}}^{A}(E, F)$.

To prove completeness, let $(T^{(m)})$ be a Cauchy sequence in $\mathcal{L}_{\bold{\Phi}}^{A}(E, F)$.
Then for each $\epsilon > 0$, there exists $m_0 \in \mathbb{N}$ such that
\begin{align}\label{equa3}
\sum_{n=1}^{\infty} \phi_n \bigg(\frac{\sum\limits_{k=1}^{\infty}|a_{nk}s_{k}(T^{(l)} - T^{(m)}) |}{ \epsilon}\bigg) \leq 1 ~ \mbox{for all~} m, l \geq m_0.
\end{align}
Therefore the sequence $ \bigg(\frac{\sum\limits_{k=1}^{\infty}|a_{nk}s_{k}(T^{(l)} - T^{(m)}) |}{ \epsilon}\bigg)$ is bounded.
Using the same argument as above, we have $\|T^{(l)} - T^{(m)} \| \rightarrow 0$ as $l, m \rightarrow \infty$.
%Thus there exists $C > 0$ such that for each $n$
%\begin{align*}
%\frac{\sum\limits_{k=1}^{\infty}|a_{nk}s_{k}(T^{(l)} - T^{(m)}) |}{ \epsilon} & \leq C \mbox{~~~for all~} m, l \geq m_0\\
%\Rightarrow |a_{n_{0}1}s_{1}(T^{(l)} - T^{(m)}) | & \leq C {\epsilon}
%\end{align*}
%which is true for any arbitrary $\epsilon>0$.
%Therefore $\|T^{(l)} - T^{(m)} \|= {s_{1}(T^{(l)} - T^{(m)})} \leq \frac{C}{|a_{n_{0}1}|} {\epsilon}$ for all $m, l \geq m_0$.
Thus $(T^{(m)})$ is a Cauchy sequence in $\mathcal{L}(E, F)$, and hence converges. Let $T = \displaystyle\lim_{m \rightarrow \infty}T^{(m)}$.
Also $s_{k}(T^{(l)} - T^{(m)}) \rightarrow s_{k}(T^{(l)} - T)$ as $m \rightarrow \infty$ for each $k \in \mathbb{N}$.
Since each $\phi_n$ is continuous so taking $m \rightarrow \infty$, we get from $(\ref{equa3})$,
\begin{align*}
\sum_{n=1}^{\infty} \phi_n \bigg(\frac{\sum\limits_{k=1}^{\infty}|a_{nk}s_{k}(T^{(l)} - T) |}{ \epsilon}\bigg) \leq 1 ~~ \mbox{for all}~~ l~ \geq m_0.
\end{align*}
Thus $(T^{(l)})$ converges to $T$ in $\mathcal{L}_{\bold{\Phi}}^{A}(E, F)$.
In particular $T - T^{(m_0)} \in \mathcal{L}_{\bold{\Phi}}^{A}(E, F)$, so $T = T^{(m_0)} + (T -T^{(m_0)}) \in \mathcal{L}_{\bold{\Phi}}^{A}(E, F)$.\\

To prove the second part, we have from (\ref{equa3.1})
\begin{align*}
\| T \| & \leq \frac{C}{|a_{n_{0}1}|} \inf \bigg\{ \epsilon> 0: \sum_{n=1}^{\infty} \phi_n \bigg(\frac{\sum\limits_{k=1}^{\infty}|a_{nk}s_{k}(T) |}{ \epsilon}\bigg) \leq 1 \bigg \}\\
\mbox{i.e.,}~\| T \| & \leq \frac{C}{|a_{n_{0}1}|} \| T \|_{\bold{\Phi}}^{A}.
\end{align*}
Hence the inclusion map from $(\mathcal{L}_{\bold{\Phi}}^{A}(E, F), \| . \|_{\bold{\Phi}}^{A})$ to $(\mathcal{L}(E, F), \| . \|)$ is continuous.
This completes the proof.
\end{proof}

\begin{Remark}
1. If we take the matrix $A={(a_{nk})}$ as an identity matrix and $s$-number as approximation number then the $s$-type $l_{\bold{\Phi}}^{A}$ operators become $l_p$ type $\cite{PIE1}$ and $l_{\phi }$ type $\cite{GUP1}$ operators when $\phi_n= x^p $ for $0< p< \infty$ and $\phi_n = \phi$, an Orlicz function respectively.\\
2. If we take the matrix $A={(a_{nk})}$ such that the matrix satisfies the condition $(\ref{equa1})$, then the class of $s$-type $l_{\bold{\Phi}}^{A}$ operators becomes $s$-type $|A, p|$  operators introduced by Maji and Srivastava $\cite{AMIT}$.
\end{Remark}

\begin{Theorem}{\label{thm4}}
Let $A = (a_{nk})$ be an infinite matrix such that $A = (a_{nk}) \in \mathcal{A}$. If $A = (a_{nk})$ satisfies the condition $(\ref{equa1})$ and $(|a_{n1}|) \in l_{\bold{\Phi}}$, then the class $\mathcal{L}_{\bold{\Phi}}^{A}$ is an operator ideal.
\end{Theorem}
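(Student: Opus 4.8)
The plan is to check that every component $\mathcal{L}_{\bold{\Phi}}^{A}(E,F)=\mathcal{L}_{\bold{\Phi}}^{A}\cap\mathcal{L}(E,F)$ satisfies the three axioms (OI1), (OI2), (OI3) of Definition \ref{def2}; note that each of $S+T$, $RST$, $x'\otimes y$ below is automatically a bounded operator, so the only thing to verify in each case is that the associated $s$-number sequence lies in $l_{\bold{\Phi}}^{A}$. Axiom (OI2) (closure under sums) is already contained in the preceding Proposition: there one uses the non-increasing property of $s$-numbers together with the additivity axiom (S2) of Definition \ref{def1} and the hypothesis $(\ref{equa1})$ to obtain, for every $n$, the estimate $\sum_{k=1}^{\infty}|a_{nk}|\,s_{k}(S+T)\leq M\sum_{k=1}^{\infty}|a_{nk}|\big(s_{k}(S)+s_{k}(T)\big)$ (split the left-hand sum into odd and even indices, bound each $s_{2k}(S+T)$ by $s_{2k-1}(S+T)$ and the latter by $s_{k}(S)+s_{k}(T)$, and apply $(\ref{equa1})$), after which convexity of each $\phi_n$ finishes it. So it remains to handle (OI1) and (OI3).

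For (OI1), take $x'\in E'$, $y\in F$. If $x'=0$ or $y=0$ the operator $x'\otimes y$ is $0$ and there is nothing to prove, so assume both are nonzero; then $x'\otimes y$ has rank one, so the rank property (S4) forces $s_{n}(x'\otimes y)=0$ for all $n\geq 2$, while $s_{1}(x'\otimes y)=\|x'\otimes y\|=\|x'\|\,\|y\|$ by (S1). Consequently $\sum_{k=1}^{\infty}|a_{nk}|\,s_{k}(x'\otimes y)=|a_{n1}|\,\|x'\|\,\|y\|$ for every $n$. By hypothesis $(|a_{n1}|)_{n=1}^{\infty}\in l_{\bold{\Phi}}$, so there is $\sigma_{0}>0$ with $\sum_{n=1}^{\infty}\phi_{n}\!\big(|a_{n1}|/\sigma_{0}\big)<\infty$; choosing $\sigma=\sigma_{0}\|x'\|\,\|y\|$ gives $\sum_{n=1}^{\infty}\phi_{n}\!\big(|a_{n1}|\,\|x'\|\,\|y\|/\sigma\big)<\infty$, hence $(s_{n}(x'\otimes y))\in l_{\bold{\Phi}}^{A}$ and $x'\otimes y\in\mathcal{L}_{\bold{\Phi}}^{A}(E,F)$.

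For (OI3), let $S\in\mathcal{L}_{\bold{\Phi}}^{A}(E,F)$, $T\in\mathcal{L}(E_{0},E)$ and $R\in\mathcal{L}(F,F_{0})$. If $\|R\|=0$ or $\|T\|=0$ then $RST=0$; otherwise the ideal property (S3) gives $s_{n}(RST)\leq\|R\|\,s_{n}(S)\,\|T\|$ for all $n$, whence $\sum_{k=1}^{\infty}|a_{nk}|\,s_{k}(RST)\leq\|R\|\,\|T\|\sum_{k=1}^{\infty}|a_{nk}|\,s_{k}(S)$. Since $(s_{n}(S))\in l_{\bold{\Phi}}^{A}$, pick $\sigma>0$ with $\sum_{n=1}^{\infty}\phi_{n}\!\big(\sigma^{-1}\sum_{k=1}^{\infty}|a_{nk}|\,s_{k}(S)\big)<\infty$; then, using that each $\phi_n$ is non-decreasing, with $\sigma'=\|R\|\,\|T\|\,\sigma$ we obtain $\sum_{n=1}^{\infty}\phi_{n}\!\big((\sigma')^{-1}\sum_{k=1}^{\infty}|a_{nk}|\,s_{k}(RST)\big)\leq\sum_{n=1}^{\infty}\phi_{n}\!\big(\sigma^{-1}\sum_{k=1}^{\infty}|a_{nk}|\,s_{k}(S)\big)<\infty$, so $RST\in\mathcal{L}_{\bold{\Phi}}^{A}(E_{0},F_{0})$. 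Collecting the three verifications shows $\mathcal{L}_{\bold{\Phi}}^{A}$ is an operator ideal.

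I do not expect a genuine obstacle: (OI2) is the only non-trivial point and it is already settled by the preceding Proposition, where $(\ref{equa1})$ is essential; the remaining subtlety is mere bookkeeping with the modular's scaling parameter, and the role of the extra hypothesis $(|a_{n1}|)\in l_{\bold{\Phi}}$ is precisely to make the finitely supported $s$-number sequence of a rank-one operator land in $l_{\bold{\Phi}}^{A}$, which is exactly what (OI1) requires.
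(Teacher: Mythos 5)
Your proof is correct and follows essentially the same route as the paper: (OI2) is delegated to the preceding Proposition via condition (\ref{equa1}), (OI1) uses the rank property (S4) together with the hypothesis $(|a_{n1}|)\in l_{\bold{\Phi}}$, and (OI3) uses (S3) with a rescaling of $\sigma$ by $\|R\|\,\|T\|$. If anything, your handling of (OI1) is slightly cleaner than the paper's, which writes the factor $\|x'\otimes y\|$ outside $\phi_n$ rather than absorbing it into the scaling parameter as you do.
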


\begin{proof}
Let $E, F$ be any two Banach spaces and $\mathcal{L}_{\bold{\Phi}}^{A}(E, F)$ be any one of the component of $\mathcal{L}_{\bold{\Phi}}^{A}$. To prove $\mathcal{L}_{\bold{\Phi}}^{A}$ is an operator ideal, it is enough to prove $(OI1)$ and $(OI3)$ in Definition \ref{def2}. Let $x^{'} \in E^{'}$ and $y \in F$. Then $x^{'}\otimes y: E \rightarrow F$ is a rank one operator, and hence $s_{k}(x^{'}\otimes y) =0$ for $k \geq 2$. Then for some $\sigma > 0$
$$\sum_{n=1}^{\infty} \phi_n \bigg(\frac{\sum\limits_{k=1}^{\infty}|a_{nk}s_{k}(x^{'}\otimes y) |}{ \sigma}\bigg)= \|x^{'}\otimes y \| \sum_{n=1}^{\infty} \phi_n \bigg(\frac{\sum\limits_{k=1}^{\infty}|a_{n1}|}{ \sigma}\bigg)< \infty $$
as $(|a_{n1}|) \in l_{\bold{\Phi}}$. Thus $x^{'}\otimes y \in \mathcal{L}_{\bold{\Phi}}^{A}(E, F)$.

Let $T \in \mathcal{L}(E_{0}, E)$, $R \in \mathcal{L}(F, F_{0})$ and
        $ S \in \mathcal{L}_{\bold{\Phi}}^{A}{(E, F)}$. It is required to prove $ RST \in
        \mathcal{L}_{\bold{\Phi}}^{A}{(E_0, F_0)}$.\\
Using the property $(S3)$ in Definition \ref{def1}, we have $ s_{n}(RST) \leq \| R \| s_{n}(S) \| T \| $ for all $ n \in \mathbb{N} $.
Since $ S \in \mathcal{L}_{\bold{\Phi}}^{A}{(E, F)}$, there exists some $\sigma > 0$ such that
$\displaystyle\sum_{n=1}^{\infty} \phi_n \bigg(\frac{\sum\limits_{k=1}^{\infty}|a_{nk}s_{k}(S)|}{ \sigma}\bigg) < \infty.$
Therefore

\begin{align*}
		\sum_{n=1}^{\infty} \phi_n \bigg(\frac{\sum\limits_{k=1}^{\infty}|a_{nk}s_{k}(RST)|}{\| R \| \| T \| \sigma}\bigg) <\infty .
\end{align*}
       Thus $ RST \in \mathcal{L}_{\bold{\Phi}}^{A}{(E_{0}, F_{0})}$ and therefore $(OI3)$ is proved.
        Hence $ \mathcal{L}_{\bold{\Phi}}^{A}$ is an operator ideal.

To prove $\| .\|_{\bold{\Phi}}^{A}$ is an ideal-norm on $ \mathcal{L}_{\bold{\Phi}}^{A}$, let $ S \in \mathcal{L}_{\bold{\Phi}}^{A}{(E, F)}$. Then for given $\epsilon> 0$, there exists some $\sigma_{0}$ such that $\sigma_0 < \|S \|_{\bold{\Phi}}^{A} + \epsilon$ with
$$\sum_{n=1}^{\infty} \phi_n \bigg(\frac{\sum\limits_{k=1}^{\infty}|a_{nk}s_{k}(S)|}{ \sigma_{0}}\bigg) \leq 1.$$
Thus for $T \in \mathcal{L}(E_{0}, E)$, $R \in \mathcal{L}(F, F_{0})$, we have
\begin{align*}
		\sum_{n=1}^{\infty} \phi_n \bigg(\frac{\sum\limits_{k=1}^{\infty}|a_{nk}s_{k}(RST)|}{\| R \| \| T \| \sigma_{0}}\bigg) \leq 1.
\end{align*}
Hence $\| RST \|_{\bold{\Phi}}^{A} \leq \| R \| \| T \| \sigma_{0}< \| R \| \| T \|(\|S \|_{\bold{\Phi}}^{A} + \epsilon)$. As $\epsilon> 0$ is arbitrary,
we have $\| RST \|_{\bold{\Phi}}^{A} \leq \| R \| \|S \|_{\bold{\Phi}}^{A}\| T \|$. Thus $ \mathcal{L}_{\bold{\Phi}}^{A}$ is a quasi-Banach operator ideal.
\end{proof}

\begin{Remark}
In particular if we choose $\phi_n = \phi$, an Orlicz function for all $n$ and $A=(a_{nk})$ as a diagonal matrix $a_{nn}= n^{\frac{1}{p} -\frac{1}{q}}$ for $0<p, q< \infty$, the operator ideal $ \mathcal{L}_{\bold{\Phi}}^{A}$ becomes $ \mathcal{L}_{p, q, \Phi}$ studied by Gupta and Bhar
$\cite{GUP1}$. This example also shows that the condition $(\ref{equa1})$ on the matrix $A$ is only sufficient condition to form operator ideal.
\end{Remark}

%\begin{pro}
%$l_{\phi}^{A}(X, Y)$ is a subspace of $\mathscr{L}(X, Y)$ such that the inclusion map from $(l_{\phi}^{A}(X, Y), \|. \|_{\phi}^{A})$ to $(\mathscr{L}(X, Y), \|. \|)$ is continuous.
%\end{pro}
%\begin{proof}
%We omit the proof as it is trivial.
%%Let $T^n$ converges to $T$ in $l_{\phi}^{w}(X, Y)$. Then for every $\epsilon > 0$ there exists $n_0 \in \mathbb{N}$ such that
%%$$\sum_{k=1}^{\infty}w_k \phi_k \Big(\frac{s_k(T^n -T)}{\epsilon}\Big) \leq 1~~\mbox{for all}~ n \geq n_0.$$
%%Therefore the sequence $\{\frac{s_k(T^n -T)}{\epsilon} \}$ is bounded for each $k$ and for all $n \geq n_0$. Thus
%%$$\|T^n -T\| \leq \epsilon C ~ \mbox{for some}~ C > 0~~\forall n \geq n_0 .$$
%%Hence $T^n$ converges to $T$ in $\mathscr{L}(X, Y)$. This completes the proof.
%\end{proof}

%
%\begin{pro}
%If $S \in \mathscr{L}(X, Y)$, $T \in  l_{\phi}^{A}(Y, Z)$ then $ST \in l_{\phi}^{A}(X, Z)$ and
%$$\|ST \|_{\phi} \leq \|S\|_{\phi} \|T \|.$$
%\end{pro}
%
%\begin{pro}
%If $T \in l_{\phi}^{A}(X, Y)$ and $S \in \mathscr{L}(Y, Z)$ then  $ST \in l_{\phi}^{A}(X, Z)$ and
%$$\|ST \|_{\phi}^{A} \leq \|S\| \|T \|_{\phi}^{A}.$$
%\end{pro}

\begin{pro}
The space $\mathcal{H}_{\bold{\Phi}}^{A}(E, F)$ is a closed subspace of $\mathcal{L}_{\bold{\Phi}}^{A}(E, F)$.
\end{pro}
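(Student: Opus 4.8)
The plan is to establish two things: that $\mathcal{H}_{\bold{\Phi}}^{A}(E,F)$ is a linear subspace of $\mathcal{L}_{\bold{\Phi}}^{A}(E,F)$, and that it is closed under the quasi-norm $\|\cdot\|_{\bold{\Phi}}^{A}$. The subspace part I would handle exactly as in the preceding Proposition: closure under scalar multiplication is immediate from $s_{k}(\alpha T)=|\alpha|s_{k}(T)$, while for $S,T\in\mathcal{H}_{\bold{\Phi}}^{A}(E,F)$ the additivity axiom $(S2)$ gives $s_{2k-1}(S+T)\le s_{k}(S)+s_{k}(T)$ and, by monotonicity $(S1)$, $s_{2k}(S+T)\le s_{2k-1}(S+T)\le s_{k}(S)+s_{k}(T)$; splitting $\sum_{k}|a_{nk}|s_{k}(S+T)$ over odd and even column indices and using $(\ref{equa1})$ yields $\sum_{k}|a_{nk}|s_{k}(S+T)\le M\sum_{k}|a_{nk}|(s_{k}(S)+s_{k}(T))$, just as in $(\ref{equa2})$. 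Then for an arbitrary $\tau>0$, setting $\sigma=\tau/(2M)$ and using convexity of each $\phi_{n}$, the modular of $(s_{k}(S+T))/\tau$ is bounded by $\tfrac12$ times those of $S/\sigma$ and $T/\sigma$, both finite since $(s_{k}(S)),(s_{k}(T))\in h_{\bold{\Phi}}^{A}$; as $\tau>0$ is arbitrary, $S+T\in\mathcal{H}_{\bold{\Phi}}^{A}(E,F)$.

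For closedness I would take $T$ in the closure of $\mathcal{H}_{\bold{\Phi}}^{A}(E,F)$ inside $\mathcal{L}_{\bold{\Phi}}^{A}(E,F)$, fix an arbitrary $\sigma>0$, and show $\sum_{n}\phi_{n}\big(\sigma^{-1}\sum_{k}|a_{nk}|s_{k}(T)\big)<\infty$. First I would pick $m_{0}$ with $\|T-T^{(m_{0})}\|_{\bold{\Phi}}^{A}<\sigma/(2M)$, so that $\sum_{n}\phi_{n}\big((2M/\sigma)\sum_{k}|a_{nk}|s_{k}(T-T^{(m_{0})})\big)\le1$ by the definition of the infimum defining the norm together with monotonicity of $\phi_{n}$. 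Writing $T=(T-T^{(m_{0})})+T^{(m_{0})}$ and applying $(S2)$ and $(S1)$ gives $s_{2k-1}(T),s_{2k}(T)\le s_{k}(T-T^{(m_{0})})+s_{k}(T^{(m_{0})})$; splitting the series over odd and even $k$ and invoking $(\ref{equa1})$ produces
\[
\sum_{k}|a_{nk}|s_{k}(T)\le M\sum_{k}|a_{nk}|s_{k}(T-T^{(m_{0})})+M\sum_{k}|a_{nk}|s_{k}(T^{(m_{0})}).
\]
Dividing by $\sigma$, applying convexity of $\phi_{n}$, and summing on $n$,
\[
\sum_{n}\phi_{n}\!\left(\frac{\sum_{k}|a_{nk}|s_{k}(T)}{\sigma}\right)\le\frac12\sum_{n}\phi_{n}\!\left(\frac{2M\sum_{k}|a_{nk}|s_{k}(T-T^{(m_{0})})}{\sigma}\right)+\frac12\sum_{n}\phi_{n}\!\left(\frac{2M\sum_{k}|a_{nk}|s_{k}(T^{(m_{0})})}{\sigma}\right).
\]
The first sum on the right is $\le1$ by the choice of $m_{0}$, and the second is finite because $(s_{k}(T^{(m_{0})}))\in h_{\bold{\Phi}}^{A}$, so the scaled modular with factor $2M/\sigma$ is finite; hence the left-hand side is finite. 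Since $\sigma>0$ was arbitrary, $(s_{n}(T))\in h_{\bold{\Phi}}^{A}$, i.e., $T\in\mathcal{H}_{\bold{\Phi}}^{A}(E,F)$, which is the assertion.

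The step I expect to be the main obstacle is not any single estimate but the bookkeeping that replaces the (false) ``subadditivity of $s$-numbers'' by the index-doubling bound coming from $(S2)$ and $(S1)$, coupled with the structural hypothesis $(\ref{equa1})$ on the matrix: one has to split each series $\sum_{k}|a_{nk}|s_{k}(\cdot)$ over odd and even column indices and choose the scaling factor $\sigma/(2M)$ precisely so that the error modular stays below $1$. Once this is set up, the remainder is a routine convexity-plus-modular computation, entirely parallel to the proof of Theorem \ref{thm3} that $h_{\bold{\Phi}}^{A}(X)$ is a closed subspace of $l_{\bold{\Phi}}^{A}(X)$.
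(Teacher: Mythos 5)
Your proof is correct and follows essentially the same route as the paper: take $T$ in the closure, pick $T^{(m_0)}$ close in the quasi-norm, use the index-doubling bound from $(S2)$, $(S1)$ and condition (\ref{equa1}) (i.e., inequality (\ref{equa2})), then convexity of each $\phi_n$ to bound the modular of $(s_k(T))$ at an arbitrary scale by a sum of two finite modulars. Your version is in fact slightly more careful than the paper's about why the error term is $\leq 1$ and about the arbitrariness of the scale $\sigma$, but there is no substantive difference in the argument.
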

\begin{proof}
Clearly $\mathcal{H}_{\bold{\Phi}}^{A}(E, F)$ is a subspace of $\mathcal{L}_{\bold{\Phi}}^{A}(E, F).$  To show $\mathcal{H}_{\bold{\Phi}}^{A}(E, F)$ is a closed subspace of $\mathcal{L}_{\bold{\Phi}}^{A}(E, F)$, let $T$ belongs to the closure of the space $\mathcal{H}_{\bold{\Phi}}^{A}(E, F)$ in the norm topology of $\mathcal{L}_{\bold{\Phi}}^{A}(E, F)$. Then there exists a sequence $(T^{(m)})$ in $\mathcal{H}_{\bold{\Phi}}^{A}(E, F)$ such that $\displaystyle\lim_{n \rightarrow \infty}\|T^{(m)} -T\|_{\bold{\Phi}}^{A} =0.$ Thus for $\epsilon> 0$, there exists $n_0 \in \mathbb{N}$ such that
$$\|T^{(m)} -T\|_{\bold{\Phi}}^{A} < \frac{\epsilon}{2} ~ \mbox{for all~} m \geq n_0.$$
Now using the condition $(\ref {equa1})$ on the matrix $A =(a_{nk})$ and $(\ref {equa2})$, we get
\begin{align*}
\sum_{n=1}^{\infty} \phi_n \bigg(\frac{\sum\limits_{k=1}^{\infty}|a_{nk}s_{k}(T) |}{ M \epsilon}\bigg)
% & =  \sum_{n=1}^{\infty} \phi_{n} \bigg(\frac{\sum\limits_{k=1}^{\infty}|a_{n, k}s_{k}(T- T^{(n_{0})}) |}{ \epsilon} +  \frac{\sum\limits_{k=1}^{\infty}|a_{n, k}s_{k}(T^{(n_{0})}) |}{ \epsilon}\bigg)\\
& \leq \frac{1}{2} \Big[\sum_{n=1}^{\infty} \phi_{n} \bigg(\frac{\sum\limits_{k=1}^{\infty}|a_{n, k}s_{k}(T- T^{(n_{0})}) |}{\frac{\epsilon}{2}}\Big) + \sum_{n=1}^{\infty} \phi_{n} \bigg(\frac{\sum\limits_{k=1}^{\infty}|a_{n, k}s_{k}(T^{(n_{0})}) |}{\frac{\epsilon}{2}}\Big) \Big]
 < \infty,
\end{align*}
as $T^{(n_0)} \in \mathcal{H}_{\bold{\Phi}}^{A}(E, F)$ and $\displaystyle\sum_{n=1}^{\infty} \phi_n \bigg(\frac{\sum\limits_{k=1}^{\infty}|a_{nk}s_{k}(T -T^{(n_0)})|}{\|T -T^{(n_0)}\|_{\bold{\Phi}}^{A}} \Big)< \infty$. Thus $T \in \mathcal{H}_{\bold{\Phi}}^{A}(E, F)$ and hence the proof is complete.
\end{proof}

\begin{pro}
        If the $s$-number sequence is injective, then the quasi-Banach operator ideal $[\mathcal{L}_{\bold{\Phi}}^{A}, \| . \|_{\bold{\Phi}}^{A}]$ is injective.
\end{pro}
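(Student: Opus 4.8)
The plan is to unwind the definition of an injective operator ideal and reduce the claim to the injectivity of the underlying $s$-number sequence. Recall that $[\mathcal{L}_{\bold{\Phi}}^{A}, \|\cdot\|_{\bold{\Phi}}^{A}]$ is injective if, for every metric injection $J \in \mathcal{L}(F, F_0)$ and every $T \in \mathcal{L}(E, F)$, the hypothesis $JT \in \mathcal{L}_{\bold{\Phi}}^{A}(E, F_0)$ forces $T \in \mathcal{L}_{\bold{\Phi}}^{A}(E, F)$ together with the norm equality $\|T\|_{\bold{\Phi}}^{A} = \|JT\|_{\bold{\Phi}}^{A}$. Throughout, the matrix $A$ is assumed to satisfy $A \in \mathcal{A}$, condition $(\ref{equa1})$ and $(|a_{n1}|) \in l_{\bold{\Phi}}$, so that $\mathcal{L}_{\bold{\Phi}}^{A}$ is already a quasi-Banach operator ideal by Theorem $\ref{thm4}$.

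First I would fix such a metric injection $J$ and operator $T$, and invoke the injectivity of the $s$-number sequence to obtain $s_k(T) = s_k(JT)$ for every $k \in \mathbb{N}$. Since the scalar sequences $(s_k(T))$ and $(s_k(JT))$ are identical, for every $\sigma > 0$ we have
\begin{align*}
\sum_{n=1}^{\infty} \phi_n \bigg(\frac{\sum\limits_{k=1}^{\infty}|a_{nk}s_{k}(T)|}{\sigma}\bigg) = \sum_{n=1}^{\infty} \phi_n \bigg(\frac{\sum\limits_{k=1}^{\infty}|a_{nk}s_{k}(JT)|}{\sigma}\bigg).
\end{align*}
Because $JT \in \mathcal{L}_{\bold{\Phi}}^{A}(E, F_0)$, the right-hand side is finite for some $\sigma > 0$; hence the left-hand side is finite for the same $\sigma$, which says precisely that $(s_k(T)) \in l_{\bold{\Phi}}^{A}$, i.e., $T \in \mathcal{L}_{\bold{\Phi}}^{A}(E, F)$.

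Next I would read off the equality of quasi-norms from the same identity: since the two modular sums coincide for every $\sigma > 0$, the admissible sets appearing in the two infima that define $\|T\|_{\bold{\Phi}}^{A}$ and $\|JT\|_{\bold{\Phi}}^{A}$ are literally the same, so the infima agree and $\|T\|_{\bold{\Phi}}^{A} = \|JT\|_{\bold{\Phi}}^{A}$. Together with Theorem $\ref{thm4}$, this establishes that $[\mathcal{L}_{\bold{\Phi}}^{A}, \|\cdot\|_{\bold{\Phi}}^{A}]$ is injective. I do not expect a genuine obstacle here: the argument is a direct transcription of the injectivity of $(s_n)$ through the definition of $l_{\bold{\Phi}}^{A}$, and the only point demanding a little care is to use the modular identity uniformly in $\sigma$, so that both the finiteness conclusion and the comparison of infima follow simultaneously.
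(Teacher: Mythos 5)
Your proposal is correct and follows essentially the same route as the paper: fix a metric injection $J$, use injectivity of the $s$-number sequence to get $s_k(T)=s_k(JT)$ for all $k$, deduce membership of $T$ in $\mathcal{L}_{\bold{\Phi}}^{A}(E,F)$ from the resulting identity of modulars, and read off $\|T\|_{\bold{\Phi}}^{A}=\|JT\|_{\bold{\Phi}}^{A}$ from the coincidence of the admissible sets of $\sigma$. Your remark that the modular identity holds uniformly in $\sigma$ is a slightly more explicit justification of the norm equality than the paper's ``clearly,'' but the argument is the same.
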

\begin{proof}
        Let $T \in \mathcal{L}(E, F)$ and $J \in \mathcal{L}(F, F_{0}) $ be any metric injection.
        Suppose that $JT \in  \mathcal{L}_{\bold{\Phi}}^{A}{(E, F_{0})}$. Then for some $\sigma_0 > 0$, we have
        $$\sum_{n=1}^{\infty} \phi_n \bigg(\frac{\sum\limits_{k=1}^{\infty}|a_{nk}s_{k}(JT) |}{ \sigma_0}\bigg) < \infty.$$
        Since the $s$-number sequence $s=(s_{n})$ is injective, we have $s_{n}(T)=s_{n}(JT)$ for all
        $T \in \mathcal{L}(E, F)$, $n\in \mathbb{N}$. Hence
        $$\sum_{n=1}^{\infty} \phi_n \bigg(\frac{\sum\limits_{k=1}^{\infty}|a_{nk}s_{k}(T) |}{ \sigma_0}\bigg) = \sum_{n=1}^{\infty} \phi_n \bigg(\frac{\sum\limits_{k=1}^{\infty}|a_{nk}s_{k}(JT) |}{ \sigma_0}\bigg) < \infty.$$
        Thus $ T \in \mathcal{L}_{\bold{\Phi}}^{A}{(E, F_{0})}$  and clearly
        $ \| JT \|_{\bold{\Phi}}^{A} = \| T \|_{\bold{\Phi}}^{A}$ holds. Hence the operator ideal $\mathcal{L}_{\bold{\Phi}}^{A}$ is injective.
        This completes the proof.
\end{proof}

\begin{pro}
        If the $s$-number sequence is surjective, then the quasi-Banach operator ideal $[\mathcal{L}_{\bold{\Phi}}^{A}, \| . \|_{\bold{\Phi}}^{A}]$ is surjective.
\end{pro}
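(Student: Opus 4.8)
The plan is to mirror, almost verbatim, the proof of the preceding proposition on injectivity, replacing the metric injection by a metric surjection and invoking the surjectivity of the $s$-number sequence in place of its injectivity. First I would recall the meaning of surjectivity of the operator ideal: given any metric surjection $Q \in \mathcal{L}(E_{0}, E)$, one must show that $TQ \in \mathcal{L}_{\bold{\Phi}}^{A}(E_{0}, F)$ forces $T \in \mathcal{L}_{\bold{\Phi}}^{A}(E, F)$, together with the equality of quasi-norms $\| TQ \|_{\bold{\Phi}}^{A} = \| T \|_{\bold{\Phi}}^{A}$.

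Next I would fix $T \in \mathcal{L}(E, F)$ and a metric surjection $Q \in \mathcal{L}(E_{0}, E)$ and assume $TQ \in \mathcal{L}_{\bold{\Phi}}^{A}(E_{0}, F)$, so that there is $\sigma_{0} > 0$ with $\sum_{n=1}^{\infty} \phi_n ( \sum_{k=1}^{\infty} |a_{nk} s_{k}(TQ)| / \sigma_{0} ) < \infty$. Since $s = (s_{n})$ is a surjective $s$-number sequence, $s_{n}(T) = s_{n}(TQ)$ for every $n \in \mathbb{N}$; substituting this identity into the modular sum gives $\sum_{n=1}^{\infty} \phi_n ( \sum_{k=1}^{\infty} |a_{nk} s_{k}(T)| / \sigma_{0} ) < \infty$, hence $T \in \mathcal{L}_{\bold{\Phi}}^{A}(E, F)$. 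Applying the same substitution inside the infimum defining $\| . \|_{\bold{\Phi}}^{A}$ shows at once that $\| T \|_{\bold{\Phi}}^{A} = \| TQ \|_{\bold{\Phi}}^{A}$, which finishes the verification that $[\mathcal{L}_{\bold{\Phi}}^{A}, \| . \|_{\bold{\Phi}}^{A}]$ is surjective.

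The only point that needs care — more a point to state precisely than a real obstacle — is that the equality $s_{n}(T) = s_{n}(TQ)$ is exactly the defining property of a surjective $s$-number sequence recalled earlier in the preliminaries, and it requires $Q$ to be a metric surjection rather than merely surjective. Beyond that, the argument is a purely formal transcription of the injective case, because the inner sum $\sum_{k} |a_{nk} s_{k}(\cdot)|$ and the outer modular $\sum_{n} \phi_n(\cdot)$ see the operator only through its sequence of $s$-numbers; no further condition on the matrix $A$ or on the Musielak--Orlicz function $\bold{\Phi}$ is needed beyond those already in force in this subsection.
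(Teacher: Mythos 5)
Your argument is correct and is precisely the one the paper intends: the authors omit this proof, stating only that it "follows in similar lines from the preceding proposition," and your transcription of the injective argument with a metric surjection $Q$ and the identity $s_n(T)=s_n(TQ)$ is exactly that parallel proof.
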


\begin{proof}
We omit the proof as it follows in similar lines from the preceding proposition.
\end{proof}


\begin{thebibliography}{99}
\bibitem{CAR} B. Carl, A. Hinrichs, {On $s$-numbers and Weyl inequalities of operators in Banach spaces}, Bull. Lond. Math. Soc. 41 (2009), no. 2, 332-340.
\bibitem{CUI2} Y. Cui, H. Hudzik, On the uniform Opial property in some modular sequence spaces, Funct. Approx. Comment. Math. 26 (1998), 93-102.
\bibitem{CUI} Y. Cui, H. Hudzik, N. Petrot, S. Suantai, A. Szymaszkiewicz, {Basic topological and geometric properties of Cesàro-Orlicz spaces}, Proc. Indian Acad. Sci. Math. Sci. 115 (2005), no. 4, 461-476.
\bibitem{FOR4}  P. Foralewski, H. Hudzik, L. Szymaszkiewicz, On some geometric and topological properties of generalized Orlicz-Lorentz
               sequence spaces, Math. Nachr. 281(2008), no. 2, 181-198.

%\bibitem{FOR1} P. Foralewski, H. Hudzik, A. Szymaszkiewicz, {Some remarks on Cesàro-Orlicz sequence spaces}, Math. Inequal. Appl. 13 (2010), no. 2, 363-386.
%
%\bibitem{FOR2}  P. Foralewski, {On some geometric properties of generalized Orlicz-Lorentz sequence spaces}, Indag. Math. (N.S.) 24 (2013), no. 2, 346-372.
\bibitem{FOR3}  P. Foralewski, H. Hudzik, L. Szymaszkiewicz, {On some geometric and topological properties of generalized Orlicz-Lorentz sequence spaces}, Math. Nachr. 281 (2008), no. 2, 181-198.

\bibitem{GUP2} M. Gupta, L. R. Acharya, {On the ideals of Orlicz type operators}, Oper. Matrices 6 (2012), no. 2, 327-337.
\bibitem{GUP1} M. Gupta, A. Bhar, {Generalized Orlicz-Lorentz sequence spaces and corresponding operator ideals}, Math. Slovaca, 64(6), 2013.
\bibitem{HARD} G. H. Hardy, J. E. Littlewood, {Some new properties of Fourie constants}, J. London Math. Soc. 6(1931), 3-9.
\bibitem{KAM} A. Kaminska, Uniform rotundity of Musielak-Orlicz sequence spaces, J. Approx. Theory 47 (1986), no. 4, 302-322.
\bibitem{HUD} H. Hudzik, W. Kurc, Monotonicity properties of Musielak-Orlicz spaces and dominated best approximation in Banach lattices, J. Approx. Theory 95 (1998), 353-368.
\bibitem{KAM1} A. Kaminska, Some remarks on Orlicz-Lorentz spaces, Math. Nachr. 147 (1990), 29-38.
\bibitem{KAN} L. V. Kantorovich, G. P. Akilov, Functional Analysis, Moscow, 1984.
\bibitem{KAT} E. Katirtzoglou, {Type and cotype of Musielak-Orlicz sequence spaces}, J. Math. Anal. Appl. 226 (1998), no. 2, 431-455.
\bibitem{KUB} D. Kubiak,  {A note on Cesàro-Orlicz sequence spaces} J. Math. Anal. Appl. 349 (2009), no. 1, 291-296.
\bibitem{LIM} S. K. Lim, P. Y. Lee, An Orlicz extension of Cesàro sequence spaces, Comment. Math. Prace Mat. 28 (1988), 117-128
\bibitem{LIN} J. Lindenstrauss, L. Tzafriri, {Classical Banach spaces I. Sequence spaces}, Springer-Verlag, Berlin, 1977.
\bibitem{AMIT} A. Maji, P. D. Srivastava, {Some results of operator ideals on $s$-type $|A, p|$ operators}, Tamkang J. Math., 45(2014), no. 2, 119-136.
\bibitem{ORL} W. Orlicz, {\"{U}ber eine gewisse Klasse von R\"{a}umen vom Typus}, Bull. Intern. Acad. Pol. 8(1932), 207-220.
\bibitem{PIE1} A. Pietsch, {Einige neue klassen von kompakten linearen Abbildungen}, Rev. Math. Pures Appl. (Bucarest) 8 (1963), 427-447.
\bibitem{PIE4} A. Pietsch, {Operator Ideals}, VEB Deutscher Verlag der Wissenschaften, Berlin, 1978.
\bibitem{PIE6} A. Pietsch, {Eigenvalues and s-numbers}, Cambridge University Press, New York, NY, USA, 1986.
\bibitem{RET} J. R. Retherford, {Applications of Banach ideals of operators}, Bull. Amer. Math. Soc.  81 (1975), no. 6, 978-1012.
\bibitem{RIE} F. Reisz, {Les syst\`{e}ms d'\`{e}quations lin\`{e}aires \`{a} une infinit\`{e} inconnues}, Paris (1913).
\bibitem{RHO1} B. E. Rhoades, {Operators of $A-p$ type}, Atti Accad. Naz. Lincei Rend. Cl. Sci.
         Fis. Mat. Natur. (8) 59 (1975), no. 3-4, 238-241 (1976).
\bibitem{SRI} P. D. Srivastava, D. K. Ghosh, {On vector valued sequence spaces $h_{N(Ek)}, l_{M(B(Ek,Y))}$ and $l_{M(E_k)}$}, J. Math. Anal. Appl. 327 (2007), no. 2, 1029-1040.
\bibitem{WOO} J.Y.T. Woo, {On modular sequence spaces}, Studia Math. 48(1973), 271-289.





%%%%%%%%%%%%%%%%%%%%%%%%%%%%%%%%%%%%%%%%%%%%%%%%%%%%%%%%%%%%%%%%%%%%%%%%%%%%%%%%%%%%%%%%%%%%%%%%%%%%%%%%%%%%%%%%%%%%%%%%%%%%%%%%%%%%%%%%%%%%%%%%%%%%

%\bibitem{CON} Gh. Constantin, {Operators of $ces-p$ type}, Rend.Acc.Naz.Lincei. 52 (1972), no. 8, 875-878.
%\bibitem{HUT} C. V. Hutton, {On the approximation numbers of an operator and its adjoint}, Math. Ann. 210 (1974), 277-280.

%\bibitem{PIE2} A. Pietsch, {$s$-numbers of operators in Banach spaces}, Studia Math. 51 (1974), 201-223.
%\bibitem{PIE3} A. Pietsch, {Small ideals of operators}, Studia Math. 51 (1974), 265-267.

%\bibitem{RHO2} B. E. Rhoades, {Some sequence spaces which include the $l^{p}$ spaces}, Tamkang
%        J. Math. 10 (1979), no. 2, 263-267.
%\bibitem{SHI} J. S. Shiue, {On the Ces$\grave{\rm a}$ro sequence spaces}, Tamkang J. Math. 1 (1970),
%        no. 1, 19-25.
%\bibitem{WAN} C. S. Wang, {On N$\ddot{o}$rlund sequence spaces}, Tamkang J. Math. 9 (1978), no. 2, 269-274.
%\bibitem{KUB} D. Kubiak, {A note on Ces$\grave{\rm a}$ro-Orlicz sequence space},  J. Math. Anal. Appl. 349 (2009), no. 1, 291-296.

%\bibitem{PIE5} A. Pietsch, {Weyl numbers and eigenvalues of operators in Banach spaces}, Math. Ann. 247 (1980), no. 2, 149-168.
%\bibitem{KAM} A. Kaminska, H. J. Lee, {Banach-Saks properties of Musielak-Orlicz and Nakano sequence spaces}, Proc. Amer. Math. Soc. 142 (2014), no. 2, 547-558.

%\bibitem{HUD} H. Hudzik, {Banach lattices with order isometric copies of $l_{\infty}$}, Indag. Math. (N.S.) 9 (1998), no. 4, 521-527.
%\bibitem{CUI4} Y. Cui, H. Hudzik, {Maluta's coefficient and Opial's properties in Musielak-Orlicz sequence spaces equipped with the Luxemberg norm},
%             {Nonlinear Analysis}, 35 (1999), 475-485.

\end{thebibliography}
\end{document}